\documentclass[11pt]{article}

\usepackage{epsfig}
\usepackage{wrapfig}
\input epsf
\usepackage{here}
\usepackage{latexsym}

\usepackage{graphicx}
\usepackage[dvips]{color}
\usepackage{latexsym}
\usepackage{amsmath}
\usepackage{graphics}
\usepackage{amsthm}
\usepackage{amssymb}

  \makeatletter
  \@addtoreset{footnote}{page}
  \makeatother

\makeatletter

\textwidth=15.2cm
\oddsidemargin=0.4cm
\evensidemargin=0.4cm
\textheight=22cm
\topmargin=0cm
\advance\headheight1.2pt

\newcommand\shavedisplay{%
	\abovedisplayskip=4pt
	\belowdisplayskip=6pt
	\jot=0pt
}

\newtheorem{theorem}{Theorem}[section]
\newtheorem{cor}{Corollary}[section]
\newtheorem{proposition}{Proposition}[section]
\newtheorem{lemma}{Lemma}[section]

\newtheorem{example}{Example}[section]
\newtheorem{remark}{Remark}[section]
\newtheorem{definition}{Definition}[section]

\newtheorem{question}{Question}[section]

\newtheorem{o-problem}{Open question}[section]
\newtheorem{claim}{Claim}[section]











\begin{document}
\shavedisplay
\pagestyle{plain}

  \title{Uniqueness of stable closed non-smooth hypersurfaces with constant anisotropic mean curvature}

\author{Miyuki Koiso
\footnote
{
This work was partially supported by JSPS KAKENHI Grant Number JP18H04487. 
2010 Mathematics Subject Classification:49Q10, 53C45, 53C42. 
Key words and phrases: anisotropic mean curvature, anisotropic surface energy, Wulff shape, crystalline variational problem, Cahn-Hoffman vector field.
}
}
\date{}


  \maketitle

\begin{abstract}
We study a variational problem for piecewise-smooth hypersurfaces in the $(n+1)$-dimensional Euclidean space with an anisotropic energy. An anisotropic energy is the integral of an energy density that depends on the normal at each point over the considered hypersurface. The minimizer of such an energy among all closed hypersurfaces enclosing the same $(n+1)$-dimensional volume is unique and it is (up to rescaling) so-called the Wulff shape. The Wulff shape and equilibrium hypersurfaces of this energy for volume-preserving variations are not smooth in general. We prove that, if the anisotropic energy density function is of $C^2$ and convex, then any closed stable equilibrium hypersurface is (up to rescaling) the Wulff shape. 
We also give fundamental definitions, many examples, and generalizations of well-known concepts and formulas like Steiner's formula and Minkowski's formula to the anisotropic case. 
\end{abstract}


\noindent Contents

\S \ref{intro}. Introduction \hfill 2

\S \ref{pre}. Formulation of piecewise-$C^r$ weak immersion and anisotropic surface energy \hfill 6

\S \ref{wulff}. Definition and characterizations of the Wulff shape \hfill 7

\S \ref{Sconint}. Convex integrand \hfill 9

\S \ref{CH}. Basic properties of the Cahn-Hoffman map \hfill 10

\S \ref{CHC}. Curvatures and the regularities of the Wulff shape and the Cahn-Hoffman map \hfill 12

\S \ref{exs}. Examples \hfill 15 

\S \ref{a-curv}. Anisotropic shape operator and anisotropic curvatures \hfill 20

\S \ref{1st}. First variation formula, anisotropic mean curvature, and anisotropic Gauss map \hfill 22 

\S \ref{tube}. Anisotropic parallel hypersurface and Steiner-type formula \hfill 28

\S \ref{min}. Minkowski-type formula \hfill 30

\S \ref{s-unique}. Proof of Theorem \ref{PK2017} \hfill 33

\S \ref{PFV}. Proof of Proposition \ref{FV} \hfill 36


\S \ref{proof:ex22}. Computations for Example \ref{ex22} \hfill 39

\S \ref{pfd}. Proof of Lemma \ref{HL} (i) \hfill 40

References \hfill  41

\section{Introduction}\label{intro}

An anisotropic surface energy
was introduced by J. W. Gibbs (1839-1903) in order to model the shape of small crystals (\cite{W},\cite{Wu}),  
which is the integral of an energy density that depends on the surface normal as follows. 
Let $\gamma:S^n \rightarrow {\mathbb R}_{\ge 0}$ be a non-negative continuous function on the unit sphere $S^n=\{\nu \in {\mathbb R}^{n+1} \; | \; \|\nu\|=1\}$ in the $(n+1)$-dimensional Euclidean space ${\mathbb R}^{n+1}$. 
Let $X$ be a closed piecewise-$C^2$ weakly immersed hypersurface in ${\mathbb R}^{n+1}$ (the definition of piecewise-$C^2$ weakly immersed  hypersurface will be given in \S \ref{pre}). $X$ will be represented as a piecewise-$C^2$ mapping $X:M\to {\mathbb R}^{n+1}$ from 
an $n$-dimensional oriented connected compact $C^\infty$ manifold $M$ into ${\mathbb R}^{n+1}$. 
And the unit normal vector field $\nu$ along $X$ is defined on $M$ except a set with measure zero. Then, we can define the anisotropic energy ${\cal F}_\gamma(X)$ of $X$ as 
$
{\cal F}_\gamma(X):=\int_{M} \gamma(\nu)\:dA$, 
where $dA$ is the $n$-dimensional volume form of $M$ induced by $X$. 
If $\gamma\equiv1$, ${\mathcal F}_\gamma(X)$ is the usual $n$-dimensional volume of the hypersurface $X$. 

It is known that, if $\gamma$ is positive on $S^n$, for any positive number $V>0$, among all closed hypersurfaces in ${\mathbb R}^{n+1}$ enclosing the same $(n+1)$-dimensional volume $V$, there exists a unique (up to translation in ${\mathbb R}^{n+1}$) minimizer $W(V)$ of ${\mathcal F}_\gamma$ (\cite{T}). 
Here a closed hypersurface means that the boundary (having tangent space almost everywhere) of a set of positive Lebesgue measure. The minimizer $W(V_0)$ for 
$V_0:=(n+1)^{-1}\int_{S^n} \gamma(\nu) \;dS^n$ is called the Wulff shape (for $\gamma$) (the standard definition of the Wulff shape will be given in \S \ref{wulff}), and we will denote it by $W_\gamma$ or simply $W$. When $\gamma\equiv 1$, $W$ is the unit sphere $S^n$. All $W(V)$ are homothetic to $W$. 
$W$ is convex but not necessarily smooth. 
On the other hand, for any given convex set $\tilde{W}$ having the origin of ${\mathbb R}^{n+1}$ inside, there exists a Lipschitz continuous function $\gamma:S^n \to {\mathbb R}_{>0}$ such that the boundary $W:=\partial\tilde{W}$ of $\tilde{W}$ is the Wulff shape for $\gamma$. However, such $\gamma$ is not unique. The ``smallest'' $\gamma$ is called the convex integrand for $W$ (or, simply, convex) (for details and for another equivalent definition of the convexity, see \S \ref{wulff}).

Each equilibrium hypersurface $X$ of ${\mathcal F}_\gamma$ for variations that preserve the enclosed $(n+1)$-dimensional volume (we will call such a variation a volume-preserving variation) has constant anisotropic mean curvature and satisfies a certain condition on its ``edges'' (Proposition \ref{EL}). 
Here the anisotropic mean curvature $\Lambda$ of a piecewise-$C^2$ weakly immersed hypersurface $X$ is defined at each regular point of $X$ as (cf. \cite{R}, \cite{KP2005})
$
\Lambda:=(1/n) \large(-{\rm div}_M D\gamma+nH\gamma\large)$, 
where $D\gamma$ is the gradient of $\gamma$ on $S^n$ and $H$ is the mean curvature of $X$ (see \S \ref{1st} for details). If $\gamma\equiv 1$, then $\Lambda=H$ holds. 

We call a piecewise-$C^2$ equilibrium  hypersurface $X$ a CAMC (constant anisotropic mean curvature) hypersurface (see Definition \ref{defR} for details). 
A CAMC hypersurface is said to be stable if the second variation of the energy ${\mathcal F}_\gamma$ for any volume-preserving variation is nonnegative.

In the special case where $\gamma\equiv 1$, CAMC hypersurface is a CMC (constant mean curvature) hypersurface of $C^\omega$ class (Corollary \ref{ELI}). 
For another special $\gamma$ defined by $\gamma(\nu_1, \nu_2, \nu_3)=\sqrt{|\nu_3^2-\nu_1^2-\nu_2^2|
}$, ($\nu=(\nu_1, \nu_2, \nu_3)\in S^2$), 
${\cal F}_\gamma(X)$ is the area of the surface $X$ in ${\mathbb R}^3$ regarded as a surface in the Lorentz-Minkowski space ${\mathbb R}^3_1:=({\mathbb R}^3, dx_1^2+dx_2^2-dx_3^2)$, and 
a CAMC surface 
is a CMC surface in 
 ${\mathbb R}^3_1$ (Example \ref{reilly2}). 
These examples suggest that ${\cal F}_\gamma$ is a generalization of many important energy functionals for surfaces. 

A natural question is 

\begin{question}
Is any closed CAMC hypersurface the Wulff shape (up to translation and homothety)?
\end{question}

The answer to this uniqueness problem is not affirmative even in the case where $\gamma\equiv 1$ (\cite{HW1986}, \cite{Kap1990}, \cite{Kap1995}). 
However, it is expected that, if a closed CAMC hypersurface $X:M\to {\mathbb R}^{n+1}$ satisfies one of the following conditions (I)-(III), the image of  $X$ coincides with the Wulff shape (up to translation and homothety). 


(I) $X$ is an embedding, that is, $X$ is an injective mapping. 

(II) $X$ is stable.

(III) $n=2$ and the genus of $M$ is $0$, that is, $M$ is homeomorphic to $S^2$. 


\noindent If we assume that the Wulff shape $W_\gamma$ is a smooth strictly convex hypersurface 
(that is, $\gamma$ is of $C^2$ and the $n$ by $n$ matrix $A:=D^2\gamma+\gamma\cdot 1$ is positive definite at any point in $S^n$, where $D^2\gamma$ is the Hessian of $\gamma$ on $S^n$ and $1$ is the identity matrix of size $n$), any closed CAMC hypersurface $X$ is also smooth and the above expectation was already proved. 
In fact, if $X$ satisfies at least one of (I)-(III), it is a 
homothety of the Wulff shape.  
This fact was proved, 
under the assumption that $\gamma \in C^\infty(S^n)$ and that $A$ is positive definite, by the following papers. (I): \cite{A} for $\gamma\equiv 1$, \cite{HLMG2009} for general $\gamma$. 
(II): \cite{BD1984} for $\gamma\equiv 1$, \cite{P1998} for general $\gamma$.  
(III): \cite{H} for $\gamma\equiv 1$, \cite{KP2010} and \cite{Ando2012} for general $\gamma$.

However, the situation is not the same for more general $\gamma$ and/or $W$. 
Actually, there exists a $C^\infty$ function $\gamma:S^n\to {\mathbb R}_{>0}$ which is not a convex integrand such that there exist closed embedded CAMC hypersurfaces in ${\mathbb R}^{n+1}$ for $\gamma$ each of which is not (any homothety or translation of) the Wulff shape (\cite{JK2018}).
Also, there exists a $C^\infty$ function $\gamma:S^2\to {\mathbb R}_{>0}$ which is not a convex integrand such that there exist closed embedded CAMC surfaces in ${\mathbb R}^3$ with genus zero for $\gamma$ each of which is not (any homothety or translation of) the Wulff shape \cite{JK2018}. 

As for the uniqueness of stable closed CAMC hypersurfaces, we prove the following uniqueness result in this paper.

\begin{theorem}\label{PK2017}
Assume that $\gamma:S^n\to {\mathbb R}_{>0}$ is of $C^2$ and the convex integrand of its Wulff shape $W$. Then, the image of any closed stable piecewise-$C^2$ CAMC weakly immersed hypersurface for $\gamma$ 
whose $r$-th anisotropic mean 
curvature for $\gamma$ is integrable for $r=1, \cdots, n$ is (up to translation and homothety) $W$.
\end{theorem}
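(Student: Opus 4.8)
The plan is to extend the Barbosa--do Carmo--Wente stability argument to the present anisotropic and non-smooth setting, using the first anisotropic Minkowski-type formula (\S\ref{min}), the second variation formula for $\mathcal F_\gamma$, the anisotropic shape operator and curvatures of \S\ref{a-curv}, and the characterization of the Wulff shape as the unique anisotropic-umbilic hypersurface (\S\ref{CHC}). Write $\nu_\gamma$ for the Cahn--Hoffman map along $X$, $S_\gamma$ for the anisotropic shape operator (with the convention $d\nu_\gamma=-S_\gamma$), and $k_1^\gamma,\dots,k_n^\gamma$ for the anisotropic principal curvatures, so that the anisotropic mean curvature $\Lambda=\frac1n\sum_i k_i^\gamma=H_1^\gamma$ is constant by hypothesis. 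The first anisotropic Minkowski formula $\int_M(\gamma+\Lambda\langle X,\nu\rangle)\,dA=0$ shows immediately that $\Lambda\neq 0$, for otherwise $\int_M\gamma\,dA=0$, impossible since $\gamma>0$; hence, after a translation and a homothety, I may normalize $\Lambda$ to equal the (constant) anisotropic mean curvature of $W$. The goal is to prove that $X$ is anisotropic-umbilic, i.e.\ $k_1^\gamma=\dots=k_n^\gamma=\Lambda$ on the regular set. This suffices: the field $Y:=X+\Lambda^{-1}\nu_\gamma$ has differential $\mathrm{id}-\Lambda^{-1}S_\gamma$, which vanishes precisely when $S_\gamma=\Lambda\cdot\mathrm{id}$, so $Y$ is then locally constant and, by \S\ref{CHC}, the image of $X$ is a translate of a homothety of $W$.

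The decisive step is the choice of test function in the stability inequality. For a volume-preserving variation, admissibility is the mean-zero condition $\int_M\psi\,dA=0$, and stability reads $-\int_M\psi\,L_\gamma\psi\,dA\ge 0$, where $L_\gamma\psi=\mathrm{div}_M(A\,\nabla\psi)+q_\gamma\,\psi$ is the anisotropic Jacobi operator, $A=D^2\gamma+\gamma\cdot 1$, and $q_\gamma$ is the anisotropic analogue of the squared norm of the second fundamental form (so that for $\gamma\equiv 1$, $q_\gamma=\sum_i k_i^2$ and $L_\gamma=\Delta+\sum_i k_i^2$). I would take $\psi_0:=\gamma+\Lambda\langle X,\nu\rangle$, which is admissible precisely by the first Minkowski formula above and reduces to the classical function $1+H\langle X,\nu\rangle$ when $\gamma\equiv 1$. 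Using the structure equations $dX=\mathrm{id}$ and $d\nu_\gamma=-S_\gamma$ together with the constancy of $\Lambda$, I would compute $L_\gamma\psi_0$ and, after integration by parts and an appeal to the first (and, where needed, the second) anisotropic Minkowski formula, rewrite $\int_M\psi_0\,L_\gamma\psi_0\,dA$ as $\int_M\big(\sum_i(k_i^\gamma)^2-n\Lambda^2\big)\,w\,dA$ for a weight $w$ whose sign must be controlled. Stability then forces this integral to be $\le 0$, while the Newton-type inequality $\sum_i(k_i^\gamma)^2\ge n\Lambda^2$ holds pointwise, with equality if and only if $k_1^\gamma=\dots=k_n^\gamma$. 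Combining the two yields anisotropic-umbilicity almost everywhere on the regular set.

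The main obstacle is that $X$ is only piecewise-$C^2$ and weakly immersed, and that $\gamma$ is merely a convex integrand, so $A$ is only positive semi-definite. Three points require care. First, the integrations by parts that produce $\int_M\psi_0\,L_\gamma\psi_0\,dA$ must be justified across the edges where the $C^2$-pieces meet; here I would invoke the equilibrium (edge) conditions of Proposition \ref{EL} to show that the boundary contributions along the edges cancel. Second, the hypothesis that each $r$-th anisotropic mean curvature $H_r^\gamma$ is integrable for $r=1,\dots,n$ is exactly what guarantees that the integrals in the Steiner-type formula (\S\ref{tube}), in the Minkowski formulas, and in the second variation are finite and that these global identities remain valid in the presence of the singular set. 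Third, where $A$ degenerates the operator $L_\gamma$ loses ellipticity, so the umbilicity conclusion is obtained first on the open set where $A$ is nondegenerate and then propagated to the remaining regular points by continuity of $\nu_\gamma$ and $S_\gamma$.

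Finally, once $S_\gamma=\Lambda\cdot\mathrm{id}$ holds on the dense regular part, the field $Y=X+\Lambda^{-1}\nu_\gamma$ is constant on each regular piece; connectedness of $M$ together with the matching of the pieces across the edges forces $Y$ to be a single constant vector $c$, so $X=c-\Lambda^{-1}\nu_\gamma$ and the image of $X$ is $c-\Lambda^{-1}W$, that is, $W$ up to translation and homothety, as claimed. I expect the genuinely new difficulty, relative to the known smooth case, to be the control of the edge terms and the verification that the integrability of the $H_r^\gamma$ makes the global Minkowski and Steiner identities, and hence the whole stability computation, legitimate on a non-smooth CAMC hypersurface.
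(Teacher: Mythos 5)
Your skeleton (stability $+$ Minkowski formula $\Rightarrow$ anisotropic umbilicity $\Rightarrow$ rigidity) is the right one, and your test function $\psi_0=\gamma(\nu)+\Lambda\langle X,\nu\rangle$ is exactly the correct infinitesimal object; but the two pillars your argument stands on do not exist in this setting, and they are not minor technicalities. First, no second variation formula $\delta^2\mathcal F_\gamma=-\int_M\psi L_\gamma\psi\,dA$ with $L_\gamma\psi=\operatorname{div}_M(A\nabla\psi)+q_\gamma\psi$ has been established for piecewise-$C^2$ weakly immersed hypersurfaces with merely convex $C^2$ density: such formulas are known for smooth immersions with uniformly convex $\gamma$ (cf.\ \cite{KP2005}, \cite{P1998}), whereas here $A=D^2\gamma+\gamma\cdot 1$ is only positive semi-definite (Theorem \ref{key2}), the induced metric degenerates on $S(X)$ so the $k_i^\gamma$ and hence $q_\gamma$ need not be integrable (this is precisely why the theorem carries the explicit integrability hypothesis on the $H^\gamma_r$), and the second variation of a hypersurface with edges contains contributions from the motion of the edges themselves, which the first-order conditions of Proposition \ref{EL} do not cancel — this is exactly why \cite{P2012} needed extra hypotheses. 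Your fallback of proving umbilicity where $A$ is nondegenerate and ``propagating by continuity'' also fails if $A$ degenerates on a set with nonempty interior, since ellipticity gives nothing there. Second, admissibility is circular as you state it: stability in this paper is defined against actual volume-preserving variations, the Barbosa--do Carmo--Eschenburg lemma (\cite{BDE1988}) that every mean-zero $\psi$ is the normal speed of such a variation is a statement about smooth immersions, and your $\psi_0$ is discontinuous across the edges (because $\nu$ jumps there), so ``the normal variation with speed $\psi_0$'' is not a variation through continuous hypersurfaces at all. The only way to realize $\psi_0$ is as the normal component of the globally continuous field $\tilde\xi+\Lambda X$ — continuity of $\tilde\xi$ across edges being Theorem \ref{CHP}, which itself requires convexity of $\gamma$ — and once you deform along that field you have left the test-function framework. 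Finally, the identity at the heart of your plan, rewriting $\int_M\psi_0L_\gamma\psi_0\,dA$ as $\int_M\bigl(\sum_i(k_i^\gamma)^2-n\Lambda^2\bigr)w\,dA$ with a sign-controlled weight, is asserted rather than proved.

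The paper closes all of these holes simultaneously by replacing the infinitesimal argument with the finite deformation: $Y_t=\mu(t)(X+t\tilde\xi)$, with $\mu(t)$ determined by $V(Y_t)\equiv V(X)$. Then everything is exact algebra, with no Jacobi operator, no integration by parts on $M$, and no ellipticity: the Steiner-type formula (Theorem \ref{th-st}) makes ${\mathcal F}_\gamma(X_t)$ and, via Lemma \ref{lem-V}, $\frac{d}{dt}V(X_t)$ polynomials in $t$ (this is where the integrability of the $H^\gamma_r$ enters), the Minkowski-type formula \rf{eq-min1} pins down $\mu'(0)=\Lambda$ and $\mu''(0)$, and one obtains
\begin{equation*}
f''(0)=-\frac1n\int_M \gamma(\nu)\sum_{1\le i<j\le n}\bigl(k_i^\gamma-k_j^\gamma\bigr)^2\,dA,
\end{equation*}
so your unidentified weight is simply $w=\gamma>0$. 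Since $\frac{d}{dt}Y_t\big|_{t=0}=\tilde\xi+\Lambda X$ has normal part exactly $\psi_0$, this is the rigorous implementation of your idea; stability and the reality of the $k_i^\gamma$ (Lemma \ref{HL}) then force $k_1^\gamma=\cdots=k_n^\gamma=$ const $\ne0$ almost everywhere, after which the paper concludes with Reilly's umbilicity theorem — your integration of $d(X+\Lambda^{-1}\tilde\xi)=0$ piece by piece is the same rigidity step in different clothes, but it too must avoid the points of $S(X)$ where the identification $dX=\mathrm{id}$ breaks down.
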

The assumption on the integrability of $r$-th anisotropic mean 
curvatures in Theorem \ref{PK2017} is a natural one. 
In fact, $r$-th anisotropic mean curvatures are quantities  that are defined by ratios of principal curvatures of $W$ and the normal curvatures of the considered hypersurface (Definition \ref{curv}), and each $r$-th anisotropic mean curvature of $W$ is $(-1)^r$ with respect to the outward-pointing normal (Remark \ref{rcurv}).  

We should remark that, although it is natural to study variational problems for anisotropic surface energy for which equilibrium surfaces have singular points, it has not yet been done sufficiently well. As for planer curves, F. Morgan \cite{M2005} proved that, if $\gamma:S^1 \to {\mathbb R}_{>0}$ is continuous and convex, then any closed equilibrium rectifiable curve for ${\mathcal F}_\gamma$ in ${\mathbb R}^2$ with area constraint is (up to translation and homothety) a covering of the Wulff shape (see \cite{MR2007} for another proof). About uniqueness of closed stable equilibria in ${\mathbb R}^3$, B. Palmer \cite{P2012} proved the same conclusion as Theorem \ref{PK2017} but under the assumptions that $\gamma:S^2 \to {\mathbb R}_{>0}$ is of $C^3$ and considered surfaces and the Wulff shape satisfy some extra assumptions (see Remark \ref{uniqueBP2} for details).

We have another important remark about the assumption of the regularity of the energy functional in Theorem \ref{PK2017}. 
If $\gamma:S^n \to {\mathbb R}_{>0}$ is a convex integrand,  
$\gamma \in C^{1, 1}$ if and only if $W_\gamma$ is uniformly convex (F. Morgan \cite{M1991})．This result implies that, if we assume that $\gamma$ is of $C^2$, then its Wulff shape $W_\gamma$ never has straight line segment (see \S \ref{Sconint} for details). This fact leads us to a future subject to weaken the assumption on the regularity of $\gamma$. 

As for the variational problem of ${\cal F}_\gamma$ without volume constraint, 
under the assumption that $\gamma \in C^2$, 
there is no closed piecewise-smooth weakly immersed hypersurface in ${\mathbb R}^{n+1}$ that is a critical point of ${\cal F}_\gamma$ (Proposition \ref {AMS}). 
This means that there is no closed piecewise-smooth weakly immersed hypersurface in ${\mathbb R}^{n+1}$ whose anisotropic mean curvature is constant zero.

We give this article as the first article that gives a systematic research of the variational problem of the anisotropic surface energy for piecewise-smooth weakly immersed hypersurfaces. Because of this reason we construct the theory from the beginning. We will not only give the proof of Theorem \ref{PK2017}, we will give fundamental definitions and many examples, prove several important results on the curvatures of the Wulff shape and the Cahn-Hoffman map, and derive important integral formulas for the critical points of our variational problem. 
This article is organized as follows. 

In \S \ref{pre}, we give the formulation of piecewise-$C^r$ weakly immersed hypersurface and the definition of its anisotropic energy. 
 
In \S \ref{wulff}, we recall  definitions of the Wulff shape and convexities of closed hypersurfaces. 
We also give the definition of the Cahn-Hoffman map $\xi_\gamma$ for $\gamma$ (Definition \ref{CHM}) as the mapping $\xi_\gamma:S^n \to {\mathbb R}^{n+1}$ defined as
$
\xi_\gamma(\nu)=D\gamma|_{\nu}+\gamma(\nu)\nu$, here the tangent space $T_\nu(S^n)$ of $S^n$ at $\nu \in S^n$ is naturally identified with the $n$-dimensional linear subspace of ${\mathbb R}^{n+1}$. It will be proved that,  
if $\gamma:S^n \to {\mathbb R}_{>0}$ is of $C^2$, then $\xi_\gamma$ is a $C^1$-(wave)front (Proposition \ref{PCH}). 
Moreover, it will be proved that, in this case the Wulff shape $W_\gamma$ is the uniquely-determined convex subset of the image $\xi_\gamma(S^n)$ of $\xi_\gamma$  including the origin in ${\mathbb R}^{n+1}$ in its interior, and that $W_\gamma=\xi_\gamma(S^n)$ if and only if $\gamma$ is convex (Theorem \ref{key2}).

In \S \ref{Sconint}, we recall the definition of convex integrand and give an analytic representation of the Wulff shape. 

In \S \ref{CH}, we give some fundamental properties of the Cahn-Hoffman map. 

In \S \ref{CHC}, we discuss curvatures and the regularities of the Wulff shape $W_\gamma$ and the Cahn-Hoffman map $\xi_\gamma$. Especially, we prove that, if the energy density function $\gamma:S^n\to {\mathbb R}_{>0}$ is of $C^2$, then the principal curvatures of $W_\gamma$ and $\xi_\gamma$ are defined at each regular point and they never vanish, and we give conditions so that they are unbounded (Theorem \ref{key1}, Corollaries \ref{cor1}, 
\ref{WCH4}
). We also prove that, even when some of the principal curvatures of them are unbounded, the integral of the mean curvature is finite (Theorem \ref{hbdd}).
 
In \S \ref{exs}, we give several examples of the energy density functions and corresponding Wulff shapes and Cahn-Hoffman maps. 

In \S \ref{a-curv}, first we define 
the Cahn-Hoffman field $\tilde{\xi}_\gamma:=\xi_\gamma\circ \nu$ along $X$ 
(Definition \ref{CHF}), which is a generalization of the Gauss map and is called also the anisotropic Gauss map of $X$ (for $\gamma$). 
Then we give the definition of the anisotropic shape operator (Definition \ref{shape}) and its representation in terms of local coordinates (Lemma \ref{shapeL}). And the definitions of anisotropic curvatures including the anisotropic mean curvature (Definition \ref{curv}) are given. We also give the representation of the anisotropic mean curvature in terms of the principal curvatures of the Cahn-Hoffman map and the corresponding normal curvatures of the considered hypersurface (Remark \ref{AMC2}). 

In \S \ref{1st}, 
we derive the Euler-Lagrange equations for our variational problem (Proposition \ref{EL}). 
In general the anisotropic Gauss map $\tilde{\xi}_\gamma=\xi_\gamma\circ \nu$ is not well-defined as a single-valued mapping because the Gauss map $\nu$ is not well-defined on the singular points of $X$. However, 
if $\gamma$ is convex and $X$ is CAMC, then strikingly $\tilde{\xi}_\gamma$ is well-defined over the whole hypersurface (Theorem \ref{CHP}).

In \S \ref{tube} and \S \ref{min}, we give the definition of anisotropic parallel hypersurface (Definition \ref{para}) and prove a Steiner-type integral formula (Theorem \ref{th-st}) and a Minkowski-type integral formula (Theorem \ref{th-min}). They are generalizations of important and useful concept (parallel hypersurfaces) and integral formulas (Steiner formula and Minkowski formula) in the isotropic case to the anisotropic case. We will use them to prove Theorem \ref{PK2017} (\S \ref{s-unique}). 

Some of the statements given in \S \ref{exs}, \S \ref{a-curv}, \S \ref{1st} will be proved in \S \ref{PFV} and in the appendices. 

\section{Formulation of piecewise-$C^r$ weakly immersed hypersurface and anisotropic surface energy} \label{pre}

First we formulate a {\it piecewise-$C^r$ weak immersion} for $r\in {\mathbb N}$. 
Let 
$M=\cup_{i=1}^k M_i$ be an $n$-dimensional oriented compact connected $C^\infty$ manifold, where each $M_i$ is an $n$-dimensional connected compact submanifold of $M$ with piecewise-$C^\infty$ boundary, and $M_i\cap M_j = \partial M_i\cap\partial M_j$, ($i, j \in \{1, \cdots, k\}$, $i\ne j$). 
We call a map 
$X:M \rightarrow{\mathbb R}^{n+1}$ a piecewise-$C^r$ weak immersion (or a piecewise-$C^r$ weakly immersed hypersurface) if $X$  satisfies the following conditions (A1), (A2), and  (A3) ($i \in \{1, \cdots, k\}$). 

(A1) $X$ is continuous, and each $X_i:=X|_{M_i} : M_i \to {\mathbb R}^{n+1}$ is of $C^r$.

(A2) If we denote by $M_i^o$ the interior of $M_i$, $X|_{M_i^o}$ is a $C^r$-immersion. 

(A3) The unit normal vector field $\nu_i:{M_i^o} \to S^n$ along $X_i|_{M_i^o}$ can be extended to a $C^{r-1}$-mapping $\nu_i:M_i \to S^n$. 
Here, if $(u^1, \cdots, u^n)$ is a local coordinate system in $M_i$, then $\{\nu_i, \partial/\partial u^1, \cdots,\partial/\partial u^n\}$ gives the canonical orientation in ${\mathbb R}^{n+1}$. 



\noindent The image $X(M)$ of a piecewise-$C^r$ weak immersion $X:M \rightarrow{\mathbb R}^{n+1}$ is also called a piecewise-$C^r$ weakly immersed hypersurface. Denote by $S(X)$ the set of all singular points of $X$, here a singular point of $X$ is a point in $M$ at which $X$ is not an immersion. 

\begin{lemma}\label{measure} 
Assume that $X:M=\cup_{i=1}^k M_i \rightarrow{\mathbb R}^{n+1}$ satisfies the above (A1), (A2), and (A3). Then, the $n$-dimensional Hausdorff measure of $X(\partial M_i)$ is zero for any $i \in \{1, \cdots, k\}$.
\end{lemma}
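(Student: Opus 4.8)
The plan is to combine two elementary facts: that $\partial M_i$ is an $(n-1)$-dimensional subset of the $n$-dimensional manifold $M$, and that $X_i:=X|_{M_i}$ is of class $C^r$ with $r\ge 1$, hence Lipschitz on the compact set $M_i$. Writing $\mathcal{H}^n$ for the $n$-dimensional Hausdorff measure, the whole statement will follow from the standard principle that a Lipschitz map cannot raise Hausdorff dimension and, in particular, sends $\mathcal{H}^n$-null sets to $\mathcal{H}^n$-null sets.

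First I would reduce the assertion to a parametrized, local statement. Since $\partial M_i$ is a compact piecewise-$C^\infty$ set, it can be written as a finite union of compact pieces, each of which admits a (piecewise-)$C^\infty$ parametrization $\phi:K\to M$ with $K$ a compact subset of ${\mathbb R}^{n-1}$. Working in finitely many coordinate charts of $M$ covering $\partial M_i$, the composition $X_i\circ\phi:K\to{\mathbb R}^{n+1}$ is then of class $C^r$ on the compact set $K$, and therefore Lipschitz with some constant $L$.

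Second, I would invoke the basic estimate $\mathcal{H}^n(f(A))\le L^n\,\mathcal{H}^n(A)$ valid for any $L$-Lipschitz map $f$. Because $K\subset{\mathbb R}^{n-1}$ has Hausdorff dimension at most $n-1$, we have $\mathcal{H}^n(K)=0$, and hence $\mathcal{H}^n\big((X_i\circ\phi)(K)\big)=0$. Summing over the finitely many pieces and charts yields $\mathcal{H}^n\big(X(\partial M_i)\big)=0$, which is the desired conclusion.

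The only point requiring some care is the passage between the intrinsic Hausdorff measure on $M$, defined through a fixed background Riemannian metric, and the Euclidean Hausdorff measure used in the charts; since coordinate charts are bi-Lipschitz on compact sets, $\mathcal{H}^n$-null sets are preserved in both directions, so this causes no real difficulty. I do not expect any substantive obstacle here: the content of the lemma is exactly the classical fact that a Lipschitz (indeed $C^1$) image of a lower-dimensional set is $\mathcal{H}^n$-negligible. The only thing one must be explicit about is the piecewise-$C^\infty$ structure of $\partial M_i$, which guarantees that it is covered by finitely many $(n-1)$-dimensional parametrizations so that the Lipschitz estimate can be applied on compact pieces.
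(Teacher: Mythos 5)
Your proof is correct, but it runs along a slightly different track than the paper's. The paper works with the pullback metric: it writes down the induced $(n-1)$-dimensional volume form $(\det(g_{ij}))^{1/2}\,du^1\wedge\cdots\wedge du^{n-1}$ on $\partial M_i$, observes that by compactness and the $C^r$ regularity of $X_i$ up to the boundary this volume is finite, and then concludes --- implicitly invoking the facts that the Hausdorff measure of the image is dominated by this intrinsic volume and that a set of finite $\mathcal{H}^{n-1}$-measure is $\mathcal{H}^n$-null --- that $X(\partial M_i)$ has zero $n$-dimensional Hausdorff measure. You instead bypass the volume form entirely: you cover $\partial M_i$ by finitely many compact parametrized pieces $\phi:K\to M$ with $K\subset{\mathbb R}^{n-1}$, note that $X_i\circ\phi$ is Lipschitz on $K$, and apply the estimate $\mathcal{H}^n(f(A))\le L^n\,\mathcal{H}^n(A)$ together with $\mathcal{H}^n(K)=0$. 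Your route is the more elementary and the more airtight of the two, since it makes explicit the null-set mechanism that the paper leaves unstated, and it needs no area-formula-type inequality relating the intrinsic volume of $\partial M_i$ to the measure of its image; it also works verbatim even where the induced metric on $\partial M_i$ degenerates (singular points of $X$ lying on $\partial M_i$), a case the paper's volume-form computation handles only because degeneracy makes the integrand smaller. What the paper's argument buys in exchange is brevity, reuse of structure already set up for the energy functional, and a quantitatively stronger intermediate conclusion, namely that $X(\partial M_i)$ has finite $(n-1)$-dimensional measure rather than merely vanishing $n$-dimensional measure. Both proofs are sound; yours would be the preferable one if the lemma were to be stated for maps that are merely Lipschitz rather than piecewise-$C^r$.
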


\noindent {\it Proof}. \ 
Let $(u^1, \cdots, u^{n-1})$ be local coordinates on $\partial M_i$. Set
$$
g_{ij}=\langle X_{u^i}, X_{u^j}\rangle, \quad i, j =1, \cdots , n-1.
$$
Then, 
the $(n-1)$-dimensional volume form of $\partial M_i$ induced by $X$ is
$$
(\det(g_{ij}))^{1/2} du^1 \wedge \cdots \wedge du^{n-1}.
$$
From this and the compactness of $\partial M_i$, the $(n-1)$-dimensional volume of $\partial M_i$ is finite. This proves the desired result. 
\hfill $\Box$

\vskip0.5truecm


Next we define the anisotropic energy of a piecewise-$C^1$ weak immersion $X:M \rightarrow{\mathbb R}^{n+1}$. 
Assume that $\gamma:S^n \rightarrow {\mathbb R}_{\ge 0}$ is a nonnegative continuous function. 
Let 
$\nu:M\setminus S(X) \to S^n$ be the unit normal vector field along $X|_{M\setminus S(X)}$. Since the $n$-dimensional Hausdorff measure of $X(S(X))$ is zero, it is reasonable to define the anisotropic energy ${\cal F}_\gamma(X)$ of $X$ as  
$$
{\cal F}_\gamma(X):=\int_{M} \gamma(\nu)\:dA.
$$ 
If $\gamma\equiv1$, ${\mathcal F}_\gamma(X)$ is the usual $n$-dimensional volume of the hypersurface $X$ 
(that is the $n$-dimensional volume of $M$ with the metric induced by $X$). 

It is often useful to consider the  homogeneous extension $\overline{\gamma}:{\mathbb R}^{n+1} \to {\mathbb R}_{\ge 0}$ of $\gamma$, which is defined as follows.
\begin{equation}\label{ext}
\overline{\gamma}(rX) := r\gamma(X), \ \forall X \in S^n, \ \forall r \ge 0.
\end{equation}
If $\overline{\gamma}$ is a convex function on ${\mathbb R}^{n+1}$  (that is, $\overline{\gamma}(v_1+v_2)\le \overline{\gamma}(v_1)+\overline{\gamma}(v_2)$ holds for all $v_1, v_2 \in {\mathbb R}^{n+1}$), $\gamma$ is called a convex integrand or simply convex (see \S \ref{Sconint} for an equivalent definition and details about convex integrand). 

\section{Definition and  characterizations of the Wulff shape}\label{wulff}

In this section, we assume that $\gamma:S^n \rightarrow {\mathbb R}_{>0}$ is a positive continuous function. We give the definition of the Wulff shape and some of its characterizations which we will need in the following sections (see \cite{T} for details). 
A positive continuous function on $S^n$ is sometimes called an integrand. 

The boundary $W_\gamma$ of the convex set 
$
\tilde{W}[\gamma] :=\cap _{\nu \in S^n} 
\bigl\{
X \in {\mathbb R}^{n+1} |\langle X, \nu\rangle \le \gamma(\nu)
\bigr\}
$ 
is called the Wulff shape for $\gamma$, where $\langle \ , \ \rangle$ means the standard inner product in ${\mathbb R}^{n+1}$. ($\tilde{W}[\gamma]$ itself is often called the Wulff shape.) 
In the special case where $\gamma\equiv 1$, $W_\gamma$ coincides with $S^n$.

If the homogeneous extension $\overline{\gamma}:{\mathbb R}^{n+1} \to {\mathbb R}_{\ge 0}$ of $\gamma$ is convex and satisfies $\overline{\gamma}(-X)=\overline{\gamma}(X)$, ($\forall X \in {\mathbb R}^{n+1}$), $\overline{\gamma}$ defines a norm of ${\mathbb R}^{n+1}$. 
The Wulff shape $W_\gamma$ coincides with the unit sphere 
$$
\{Y \in {\mathbb R}^{n+1} \:|\: \gamma^{\ast}(Y)=1\}
$$
of the dual norm $\gamma^{\ast}$ of $\overline{\gamma}$ which is defined as  
$$
\gamma^{\ast}(Y)=\sup \{Y\cdot Z\:|\:\gamma(Z)\le 1\}, \quad \forall Y \in {\mathbb R}^{n+1}.
$$ 

$W_\gamma$ is not smooth in general. $W_\gamma$ is smooth and strictly convex 
(that is, 
each principal curvature of $W_\gamma$ with respect to the inward-pointing normal is positive at each point of $W_\gamma$
) 
if and only if   $\gamma$ is of $C^2$ and the $n$ by $n$ matrix $D^2\gamma+\gamma\cdot 1$ is positive definite at any point in $S^n$, where $D^2\gamma$ is the Hessian of $\gamma$ on $S^n$ and $1$ is the identity matrix of size $n$ (cf. Corollary \ref{WCH4}). 
When $\gamma$ satisfies such a convexity condition, the functional ${\cal F}_\gamma$ is sometimes called a  
(constant coefficient) parametric elliptic functional (\cite{Fe}, \cite{GT}).

\begin{definition}\label{CHM}
Assume that $\gamma:S^n \to {\mathbb R}_{\ge 0}$ is of $C^1$. We call the continuous map $\xi:S^n \to {\mathbb R}^{n+1}$ defined as
\begin{equation}\label{CHE}
\xi(\nu) :=\xi_\gamma(\nu):=D\gamma|_{\nu}+\gamma(\nu)\nu, \quad \nu \in S^n
\end{equation}
the Cahn-Hoffman map (for $\gamma$). 
Here the tangent space $T_\nu(S^n)$ of $S^n$ at $\nu \in S^n$ is naturally identified with the $n$-dimensional linear subspace of ${\mathbb R}^{n+1}$. 
\end{definition}

\begin{remark}\label{homoge1}
It is easy to show that the Cahn-Hoffman map $\xi_\gamma:S^n \to {\mathbb R}^{n+1}$ is represented by using $\overline\gamma$ as
\begin{equation}\label{homoge2}
\xi_\gamma(\nu)=\overline D\overline\gamma|_{\nu}, \quad \nu \in S^n,
\end{equation}
where $\overline D$ is the gradient in ${\mathbb R}^{n+1}$.
\end{remark}

If $\gamma:S^n \to {\mathbb R}_{>0}$ is of $C^1$, 
the Wulff shape $W_\gamma$ is a subset of the image $\hat{W}_\gamma:=\xi_\gamma(S^n)$ of $\xi_\gamma$, and  
$\hat{W}_\gamma=W_\gamma$ holds if and only if $\gamma$ is convex (Theorem \ref{key2}). 

\begin{definition}\label{supp}{\rm 
Assume that $S$ is a closed hypersurface in ${\mathbb R}^{n+1}$ that is the boundary of a bounded open set $\Omega$. 
Denote by $\overline{\Omega}$ the closure of $\Omega$, that is, $\overline{\Omega}=\Omega \cup S$.

(i) $S$ is said to be strictly convex if, for any straight line segment $PQ$ connecting two distinct points $P$ and $Q$ in $S$, $PQ \subset \overline{\Omega}$ and $PQ\cap S=\{P, Q\}$ hold.

(ii) $S$ is said to be convex if, for any straight line segment $PQ$ connecting two points $P$ and $Q$ in $S$, $PQ \subset \overline{\Omega}$ holds.

(iii) (cf. \cite{R1970}) Assume that $S$ is convex. The support function of $S$ is a mapping $\sigma_S:S^n\to {\mathbb R}$ which is defined as follows. For $\nu \in S^n$ and $a\in{\mathbb R}$, define a closed half space $\Pi(\nu, a)$ as 
$$
\Pi(\nu, a):=\bigl\{
X \in {\mathbb R}^{n+1} \;|\;\langle X, \nu\rangle \le a
\bigr\}.
$$
Then, set
$$
\sigma_S(\nu):=\min \{a \in {\mathbb R}\;|\;\Pi(\nu, a)\supset S\}.
$$
}\end{definition}

Now assume that $\gamma$ is of $C^2$ and convex. Then, for any $\nu \in S^n$, the outward-pointing unit normal to $W_\gamma$ at the point $\xi(\nu)$ is well-defined and it coincides with $\nu$ (Proposition \ref{PCH}). Hence  
$\gamma$ is the support function of $W_\gamma$, because 
$\langle\xi(\nu), \nu \rangle
=\langle D\gamma|_\nu+\gamma(\nu)\nu, \nu \rangle
=\gamma(\nu)$ holds.

Conversely, consider an arbitrary piecewise-$C^1$ strictly convex closed hypersurface $W$ in ${\mathbb R}^{n+1}$. Assume that the origin of ${\mathbb R}^{n+1}$ is contained in the open domain bounded by $W$. 
Denote by $\gamma$ the support function of $W$. Then, $W$ is the Wulff shape for $\gamma$ (cf. \cite{T}). 

Therefore, there is a one-to-one correspondence between the following two sets $A$ and $B$. 
\begin{eqnarray}
A\!\!&=&\!\!\{
W \subset {\mathbb R}^{n+1} \;|\;
\textup{$W$ is a piecewise-$C^1$ strictly convex closed hypersurface, and} \nonumber\\  
&&\textup{the origin of ${\mathbb R}^{n+1}$ is contained in the open domain bounded by W.} 
\}, \label{defA}\\
B\!\!&=&\!\!\{
\gamma:S^n \to {\mathbb R}_{>0} \;|\;
\textup{
$\gamma$ is of $C^2$ and convex}.
\}. \label{defB}
\end{eqnarray}
The correspondence is given by the bijection $F:A \to B$ which maps $W \in A$ to the support function of $W$. The inverse mapping $F^{-1}:B \to A$ maps $\gamma \in B$ to its Wulff shape $W_\gamma$.  

An integrand $\gamma$ is said to be crystalline if its Wulff sape $W_\gamma$ is a polyhedral hypersurface (\cite{T}). 

%
%

\section{Convex integrand}\label{Sconint}

In this section, we prove that the Cahn-Hoffman map $\xi_\gamma$ (Definition \ref{CHM}) gives a representation of the Wulff shape $W_\gamma$ if $\gamma$ is of $C^2$ and convex (Theorem \ref{key2}). 

\begin{definition}{\rm
For any nonnegative continuous function $\gamma:S^n\to{\mathbb R}_{\ge 0}$, the set $\{\gamma(\nu)\nu \;;\; \nu \in S^n\}$ is called  the Wulff plot for $\gamma$ (or the radial plot of $\gamma$).
}
\end{definition}

\begin{definition}[\cite{T}]\label{conint}
A positive continuous function $\gamma:S^n \to {\mathbb R}_{>0}$ is called a convex integrand if the Wulff plot of the function
$$
1/\gamma :S^n \to {\mathbb R}_{>0}, \quad (1/\gamma)(\nu):=\gamma(\nu)^{-1}, \ \forall \nu\in S^n,
$$
is convex. 
\end{definition}

\begin{remark}[\cite{T}]\label{conr}
(i) For any continuous $\gamma:S^n \to {\mathbb R}_{>0}$, there exists a unique convex integrand $\tilde{\gamma}$ such that $W_{\gamma}=W_{\tilde{\gamma}}$ holds. 

(ii) $\tilde{\gamma}$ is the smallest integrand among all integrands having the same Wulff shape, that is
$$
\tilde{\gamma}(\nu)=\min\{
f(\nu)\;|\; f\in C^0(S^n, {\mathbb R}_{>0}), W_f=W_{\tilde{\gamma}}
\}, \quad \forall \nu\in S^n
$$
holds. 
\end{remark}

Recall that $\overline\gamma : {\mathbb R}^{n+1} \to {\mathbb R}_{\ge0}$ is the homogeneous extension of $\gamma : S^n \to {\mathbb R}_{\ge 0}$ (\S \ref{pre}).

\begin{remark}[cf. \S 3 of \cite{M1991}]\label{morgan1991} {\rm 
Assume that $\gamma:S^n\to {\mathbb R}_{>0}$ is continuous. 
Then $\gamma$ is convex if and only if $\overline{\gamma}$ is a convex function. 
}\end{remark}


%
%
%
%
%
%
%


Although maybe the following result has been already proved somewhere, we will give its proof for completeness. 

\begin{theorem}\label{key2} 
Assume that $\gamma:S^n\to {\mathbb R}_{>0}$ is of $C^2$. 

(i) The Wulff shape $W_\gamma$ for $\gamma$ is a subset of the image $\hat{W}_\gamma:=\xi_\gamma(S^n)$ of the Cahn-Hoffman map $\xi_\gamma:S^n\to {\mathbb R}^{n+1}$.

(ii) The following (a), (b) and (c) are equivalent.

\hspace{5mm} (a) $\gamma$ is a convex integrand. 

\hspace{5mm} (b) $W_\gamma=\hat{W}_\gamma$ holds.

\hspace{5mm} (c) $D^2\gamma+\gamma\cdot 1$ is positive-semidefinite, that is, the eigenvalues are all nonnegative on the tangent space at each point in $S^n$.

(iii) If $\gamma$ is uniformly convex, that is, the eigenvalues of $D^2\gamma+\gamma\cdot 1$ are all positive on the tangent space at each point in $S^n$, then $\xi_\gamma:S^n\to {\mathbb R}^{n+1}$ is an embedding onto 
$W_\gamma$. In this case, for each $\nu \in S^n$, 
$\nu$ coincides with the outward-pointing unit normal of $W_\gamma$ at the point $\xi_\gamma(\nu)$, and 
$\gamma$ is the support function of $W_\gamma$, that is, 
$\gamma(\nu)=\langle\xi_\gamma(\nu), \nu \rangle$ holds.
\end{theorem}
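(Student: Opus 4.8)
The plan is to prove the three parts in the order (i), then the equivalence (ii), and finally (iii), exploiting the support-function characterization of the Wulff shape recalled in \S\ref{wulff} together with the homogeneous extension $\overline\gamma$.

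For part (i), I would argue as follows. By definition, $W_\gamma=\partial\tilde W[\gamma]$ where $\tilde W[\gamma]=\cap_{\nu\in S^n}\{X:\langle X,\nu\rangle\le\gamma(\nu)\}$. A point $P\in W_\gamma$ lies on the boundary, so there is at least one $\nu_0\in S^n$ for which the supporting hyperplane $\langle X,\nu_0\rangle=\gamma(\nu_0)$ passes through $P$; that is, $\langle P,\nu_0\rangle=\gamma(\nu_0)$ and $\langle P,\nu\rangle\le\gamma(\nu)$ for all $\nu$. The function $\nu\mapsto\gamma(\nu)-\langle P,\nu\rangle$ on $S^n$ then attains its minimum value $0$ at $\nu_0$. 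Since $\gamma\in C^2$, I would compute the tangential gradient of this function on $S^n$ and set it to zero at $\nu_0$, which gives $D\gamma|_{\nu_0}=P-\langle P,\nu_0\rangle\nu_0$ (the tangential part of $P$). Combining with $\langle P,\nu_0\rangle=\gamma(\nu_0)$ yields $P=D\gamma|_{\nu_0}+\gamma(\nu_0)\nu_0=\xi_\gamma(\nu_0)$, so $P\in\hat W_\gamma$. Hence $W_\gamma\subset\hat W_\gamma$.

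For part (ii), the cleanest route is through $\overline\gamma$. By Remark \ref{morgan1991}, $\gamma$ is convex if and only if $\overline\gamma$ is a convex function on $\mathbb R^{n+1}$, giving (a). By Remark \ref{homoge1}, $\xi_\gamma(\nu)=\overline D\overline\gamma|_\nu$, so $\hat W_\gamma$ is the image of the gradient map of $\overline\gamma$ restricted to $S^n$. To connect (a) and (c), I would relate the Hessian of $\overline\gamma$ to $D^2\gamma+\gamma\cdot 1$: using the $1$-homogeneity of $\overline\gamma$, a direct computation expresses the restriction of the Euclidean Hessian $\overline D^2\overline\gamma$ to directions tangent to $S^n$ in terms of $A:=D^2\gamma+\gamma\cdot 1$ (the radial direction is a null direction by Euler's relation). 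Convexity of $\overline\gamma$ is thus equivalent to positive-semidefiniteness of $A$ on each tangent space $T_\nu S^n$, which is exactly (c), establishing (a)$\Leftrightarrow$(c). For (b), I already have $W_\gamma\subset\hat W_\gamma$ from (i); the reverse inclusion $\hat W_\gamma\subset W_\gamma$ should hold precisely when every point $\xi_\gamma(\nu)$ lies on the boundary of the convex body, i.e.\ when $\nu\mapsto\gamma(\nu)-\langle\xi_\gamma(\mu),\nu\rangle$ has its minimum $0$ at $\mu$ for every $\mu$; I would show this holds iff $\gamma$ is convex, i.e.\ iff (a) holds. Equivalently, when $A$ fails to be positive-semidefinite, the gradient image $\hat W_\gamma$ develops ``swallowtail'' portions that stick out of the convex hull $\tilde W[\gamma]$, so $\hat W_\gamma\ne W_\gamma$; thus (b)$\Leftrightarrow$(a).

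For part (iii), uniform convexity means $A$ is positive-definite on every tangent space, so by the Hessian computation above $\overline\gamma$ is strictly convex on rays transverse to the radial direction, and the gradient map $\overline D\overline\gamma$ is a diffeomorphism onto its image; restricted to $S^n$ this makes $\xi_\gamma$ an immersion, and since $A$ is invertible the outward normal of $\hat W_\gamma$ at $\xi_\gamma(\nu)$ is well-defined. I would verify $\langle\xi_\gamma(\nu),\nu\rangle=\gamma(\nu)$ by the short computation already displayed in \S\ref{wulff}, and show by differentiating $\xi_\gamma$ that $\nu$ is orthogonal to the image of $d\xi_\gamma$, hence $\nu$ is the (outward) unit normal; that $\gamma$ is the support function then follows from part (ii) and the support-function characterization recalled earlier. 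Injectivity of $\xi_\gamma$, upgrading the immersion to an embedding, follows because $\hat W_\gamma=W_\gamma$ is a convex hypersurface whose outward Gauss map is exactly $\nu\mapsto\nu$, which is a bijection $S^n\to S^n$.

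I expect the main obstacle to be the Hessian computation in part (ii): carefully relating the full Euclidean Hessian $\overline D^2\overline\gamma$ of the homogeneous extension to the spherical operator $A=D^2\gamma+\gamma\cdot 1$, keeping track of the null radial direction and the correct normalization, is where the real content of the equivalence (a)$\Leftrightarrow$(c) sits. The inclusion arguments in (i) and the reverse inclusion in (b) are more geometric and, once the variational (first-order) condition $D\gamma|_\nu=$ tangential part of the boundary point is set up correctly, should follow cleanly.
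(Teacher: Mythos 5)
Your proposal is essentially correct, and it overlaps with the paper's proof only in the central computation: like the paper, you reduce (a)$\Leftrightarrow$(c) to the identity relating the Euclidean Hessian of the homogeneous extension $\overline\gamma$ to $A=D^2\gamma+\gamma\cdot 1$ (with the radial direction null by Euler's relation), and you correctly flag this as the real content there. Where you genuinely diverge is in the set-inclusion arguments. The paper proves (i) and (a)$\Leftrightarrow$(b) through support-function theory: it invokes Proposition \ref{support}, Taylor's characterization that $\gamma$ is convex iff $\gamma$ is the support function of $W_\gamma$, and the principle that ``a support function determines the hypersurface uniquely.'' You instead work directly from the definition $\tilde W[\gamma]=\cap_\nu\{\langle X,\nu\rangle\le\gamma(\nu)\}$: for (i) you use the first-order condition at the minimum of $\nu\mapsto\gamma(\nu)-\langle P,\nu\rangle$ to conclude $P=\xi_\gamma(\nu_0)$, which is a complete, self-contained argument and arguably tighter than the paper's; for (a)$\Rightarrow$(b) the inclusion $\hat W_\gamma\subset\tilde W[\gamma]$ is exactly the subgradient inequality $\overline\gamma(w)\ge\langle\overline D\overline\gamma|_\mu,w\rangle$ plus Euler's identity, so each $\xi_\gamma(\mu)$ sits on its own supporting hyperplane and hence on the boundary. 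Two places in your sketch need honest completion. First, the converse (b)$\Rightarrow$(a): the ``swallowtail'' picture is a heuristic, not a proof; but your minimum-condition formulation does close the gap, since $\gamma(\nu)\ge\langle\xi_\gamma(\mu),\nu\rangle$ for all $\mu,\nu$ is (by homogeneity) the statement that $\overline\gamma$ lies above all its tangent hyperplanes, which for a $C^1$ function is equivalent to convexity. Second, injectivity in (iii): a convex body can have several supporting normals at a single boundary point, so ``the Gauss map is a bijection'' needs the extra observation that, $\xi_\gamma$ being an immersion, the images of small neighborhoods of two preimages $\nu_1\ne\nu_2$ are each (by invariance of domain) neighborhoods of the common point in $W_\gamma$, forcing the outward normals to agree; alternatively one can use strict monotonicity of $\overline D\overline\gamma$ transverse to rays. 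With these two completions your route is a valid, and in parts more elementary, alternative to the paper's.
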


\noindent {\it Proof.}\ 
First we prove (i). 
The support function of $W_\gamma$ in the sence of Proposition \ref{support} coincides with $\gamma$ almost everywhere on $W_\gamma$. On the other hand, the support function of $\xi_\gamma$ is exactly $\gamma:S^n \to {\mathbb R}_{>0}$. Since a support function determines the hypersurface uniquely, $W_\gamma\subset \xi_\gamma(S^n)$ holds. 

Now we prove (ii). First we prove that (a) and (c) are equivalent. 
Note that $\gamma$ is convex
(that is, $\overline{\gamma}(v_1+v_2)\le \overline{\gamma}(v_1)+\overline{\gamma}(v_2)$ holds for all $v_1, v_2 \in {\mathbb R}^{n+1}$)
 if and only if the Hessian $Hess(\overline\gamma)$ of $\overline\gamma$ is positive-semidefinite. 
It is sufficient to prove that $D^2\gamma+\gamma\cdot 1$ is positive-semidefinite at $\nu=(0, \cdots, 0, 1)$. 
Then, we can use $(x_1, \cdots, x_n)$ as local coordinate of $S^n$ around $\nu$, 
where $(x_1, \cdots, x_{n+1})$ is the canonical coordinate in ${\mathbb R}^{n+1}$. Then, 
$$
D^2\gamma=
\sum_{i,j=1}^n\Biggl(
\frac{\partial^2\gamma}{\partial x_i \partial x_j}
-\sum_{k=1}^n\Gamma^k_{ij}\frac{\partial\gamma}{\partial x_k}
\Biggr)dx_i\otimes dx_j,
$$
where $\Gamma^k_{ij}$ is the Christoffel symbols. 
Since $\nu$ gives the normal direction of $\hat{W}$ at $\xi(\nu)$ (Proposition \ref{PCH}), and since $\overline\gamma$ is homogeneous, we can show that
$$
D^2\gamma+\gamma\cdot 1=Hess(\overline\gamma)\big|_{S^n}
$$
holds. This implies that (a) and (c) are equivalent. 

Next we prove that (a) and (b) are equivalent. 
$\gamma$ is convex if and only if $\gamma$ is the support function of $W_\gamma$ (\cite{T}). 
On the other hand, $\gamma$ is the support function of $\xi_\gamma$ in the sence of Proposition \ref{support}. 
The property that the support functions of $W_\gamma$ and $\xi_\gamma$ coincide is equivalent to the property that $W_\gamma$ and $\xi_\gamma(S^n)$ coincide. 

Lastly we prove (iii). 
The first statement follows from the fact that $d\xi_\gamma=D^2\gamma+\gamma\cdot 1$ and (ii). The second statement follows from Proposition \ref{support}. 
%
\hfill $\Box$


\section{Basic properties of the Cahn-Hoffman map}\label{CH}

In Theorem \ref{key2}, we observed that, if $\gamma:S^n \to {\mathbb R}_{>0}$ is of $C^2$ and convex, then the Cahn-Hoffman map $\xi_\gamma$ gives a parametrization of the Wulff shape $W_\gamma$. 
In this section we also assume that $\gamma$ is of $C^2$, but we do not assume that $\gamma$ is convex, and we discuss geometry of $\xi_\gamma$, especially its singular points and its curvatures. 

First we give some fundamental properties of $\xi_\gamma$. 

\begin{proposition}\label{PCH}
Assume that $\gamma:S^n \to {\mathbb R}_{>0}$ is of $C^2$. Then, the Cahn-Hoffman map $\xi:=\xi_\gamma$ satisfies the following (i) and (ii), hence $\xi$ is a $C^1$-(wave)front.

(i) 
\begin{equation}\label{fr1}
\langle(d\xi)_{\nu}(u), \nu\rangle=0, \quad \forall \nu \in S^n, \ \forall u\in T_{\nu}S^n.
\end{equation}

(ii) The mapping 
\begin{equation}\label{fr2}
(\xi, id_{S^n}):S^n \to {\mathbb R}^{n+1}\times S^n, \quad 
(\xi, id_{S^n})(\nu):=(\xi(\nu), \nu)
\end{equation}
is a $C^1$-immersion.
\end{proposition}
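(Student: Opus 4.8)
The plan is to derive both (i) and (ii) from a single explicit computation of the differential $d\xi$, after first noting the regularity. Since $\gamma\in C^2$, the gradient field $D\gamma$ is of class $C^1$, hence $\xi=\xi_\gamma$ is a $C^1$ map and the lift $(\xi,\id_{S^n})$ is $C^1$; this accounts for the regularity claims. The core of the argument is to show that, for $\nu\in S^n$ and $u\in T_\nu S^n$,
\[
(d\xi)_\nu(u)=\big(D^2\gamma+\gamma\cdot 1\big)(u),
\]
an endomorphism of $T_\nu S^n$, from which (i) is immediate because a tangent vector of $S^n$ at $\nu$ is orthogonal to $\nu$.

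To obtain this formula I would fix a curve $\nu(t)$ in $S^n$ with $\nu(0)=\nu$ and $\dot\nu(0)=u$, recalling that on $S^n$ the position vector coincides with the outward unit normal, so the variation of the normal is $u$ itself. Differentiating $\xi(\nu(t))=D\gamma|_{\nu(t)}+\gamma(\nu(t))\nu(t)$ at $t=0$, the second term contributes $\langle D\gamma,u\rangle\nu+\gamma(\nu)u$. For the first term I decompose the ambient (Euclidean) derivative of $D\gamma$ into tangential and normal parts via the Gauss formula: the tangential part is the covariant derivative $\nabla_u D\gamma$, which by definition of the Hessian equals $(D^2\gamma)(u)$, while the normal part is $-\langle D\gamma,u\rangle\nu$, obtained by differentiating the identity $\langle D\gamma,\nu\rangle=0$ and using $\partial_u\nu=u$. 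The two normal contributions $-\langle D\gamma,u\rangle\nu$ and $+\langle D\gamma,u\rangle\nu$ cancel, leaving exactly $(D^2\gamma+\gamma\cdot 1)(u)\in T_\nu S^n$, which proves (i). Alternatively, (i) follows at once from Remark \ref{homoge1}: since $\xi=\overline D\overline\gamma$, one has $(d\xi)_\nu=Hess(\overline\gamma)|_\nu$, and the degree-one homogeneity of $\overline\gamma$ forces $Hess(\overline\gamma)|_\nu\,\nu=0$ by Euler's relation, so that $\langle(d\xi)_\nu(u),\nu\rangle=\langle u,Hess(\overline\gamma)|_\nu\,\nu\rangle=0$.

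For (ii), I would observe that the differential of $(\xi,\id_{S^n})$ at $\nu$ sends $u\in T_\nu S^n$ to $\big((d\xi)_\nu(u),u\big)\in{\mathbb R}^{n+1}\times T_\nu S^n$. If this vanishes, then its second component $u$ vanishes, so the differential is injective for every $\nu$; hence $(\xi,\id_{S^n})$ is a $C^1$-immersion regardless of the behaviour of $d\xi$. Finally, (i) and (ii) are exactly the two defining conditions for $\xi$ to be a front: (i) states that the lift $(\xi,\id_{S^n})$ is Legendrian for the canonical contact structure on ${\mathbb R}^{n+1}\times S^n$ (the contact form pulls back to $\langle d\xi,\nu\rangle=0$), and (ii) states that this Legendrian lift is an immersion. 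Together with the $C^1$ regularity this means $\xi$ is a $C^1$-(wave)front.

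The only genuine work is the differential computation, and I expect the main obstacle there to be the careful bookkeeping of the Gauss/Weingarten decomposition on $S^n$ together with the identification of $\nabla_u D\gamma$ with $(D^2\gamma)(u)$; the crucial and slightly delicate point is the cancellation of the two normal terms, which is what makes $(d\xi)_\nu$ tangential. Everything after that—parts (i), (ii), and the recognition of $\xi$ as a front—is formal.
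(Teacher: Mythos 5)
Your proof is correct, and it reaches the conclusion by a genuinely different route than the paper. The paper proves (i) by a local-coordinate computation: it expands $\partial\xi/\partial y^k$ into four terms, computes the normal component of the term involving $\psi_{y^jy^k}$ via the second fundamental form of $S^n$, and then evaluates everything in stereographic coordinates at the point (where $g_{ij}=\delta_{ij}$, $h_{ij}=-\delta_{ij}$) to exhibit the cancellation against the term $(\gamma(\nu))_{y^k}\nu$. Your first argument performs the same cancellation invariantly, via the Gauss decomposition of the ambient derivative of $D\gamma$, and this buys you something the paper's proof of Proposition \ref{PCH} does not actually establish: the explicit identity $(d\xi)_\nu=(D^2\gamma+\gamma\cdot 1)|_{T_\nu S^n}$ as an endomorphism of $T_\nu S^n$, a fact the paper invokes repeatedly later (in the proofs of Theorems \ref{key2}, \ref{key1}, and \ref{hbdd}) without a detailed derivation. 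Your second argument, via Remark \ref{homoge1} and Euler's relation $Hess(\overline\gamma)|_\nu\,\nu=0$ for the degree-zero homogeneous field $\overline D\overline\gamma$, combined with the symmetry of the ambient Hessian, is shorter still and avoids all curvature bookkeeping on $S^n$; it is legitimate here since $\gamma\in C^2(S^n)$ makes $\overline\gamma$ of class $C^2$ away from the origin. Your treatment of (ii) — injectivity of $u\mapsto((d\xi)_\nu(u),u)$ read off from the second component — is the same one-line observation as the paper's, and your identification of (i) and (ii) as the Legendrian and immersion conditions for a front is a correct gloss on what the paper leaves implicit.
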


We will give the proof of Proposition \ref{PCH} at the end of this section for completeness.

Proposition \ref{PCH} (i) yields the following Definition \ref{CCH1} and Corollary \ref{CCH2}.

\begin{definition}\label{CCH1}
Assume that $\gamma:S^n \to {\mathbb R}_{>0}$ is of $C^2$. Then, at any point $\nu \in S^n$ we call the hyperplane perpendicular to $\nu$ the tangent hyperplane of $\xi_\gamma$ at $\nu$ (or at $\xi_\gamma(\nu))$.
\end{definition}

\begin{cor}\label{CCH2}
Assume that $\gamma:S^n \to {\mathbb R}_{>0}$ is of $C^2$. Then, at any point $\nu \in S^n$ where $\xi_\gamma$ is an immersion, $\nu$ itself gives a unit normal to $\xi_\gamma$.
\end{cor}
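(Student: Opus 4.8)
The plan is to deduce the statement directly from Proposition \ref{PCH}(i), which has already established the orthogonality relation $\langle(d\xi)_\nu(u), \nu\rangle=0$ for every $\nu \in S^n$ and every $u\in T_\nu S^n$. The only work left is to reinterpret this orthogonality as the assertion that $\nu$ is a unit normal, which requires a genuine tangent space — and hence a well-defined normal line — to exist. That is precisely what the immersion hypothesis supplies.

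First I would fix a point $\nu \in S^n$ at which $\xi:=\xi_\gamma$ is an immersion, meaning that the differential $(d\xi)_\nu:T_\nu S^n \to {\mathbb R}^{n+1}$ is injective. Its image $(d\xi)_\nu(T_\nu S^n)$ is then an $n$-dimensional linear subspace of ${\mathbb R}^{n+1}$, and by definition this is the tangent space of the immersed hypersurface $\xi_\gamma$ at the point $\xi(\nu)$, i.e. the tangent hyperplane of Definition \ref{CCH1}.

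Next I would invoke Proposition \ref{PCH}(i): for every $u\in T_\nu S^n$ one has $\langle(d\xi)_\nu(u), \nu\rangle=0$, so $\nu$ is orthogonal to the entire tangent space $(d\xi)_\nu(T_\nu S^n)$. Since this subspace is $n$-dimensional inside ${\mathbb R}^{n+1}$, its orthogonal complement is the one-dimensional normal line, and $\nu$, being a nonzero vector in that complement, spans it. As $\nu \in S^n$ is already a unit vector, it is therefore a unit normal to $\xi_\gamma$ at $\xi(\nu)$, which is exactly the claim.

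The argument is essentially immediate once Proposition \ref{PCH}(i) is available, so there is no substantive obstacle; the only point demanding care is the implicit use of the immersion hypothesis. This is what guarantees that $(d\xi)_\nu(T_\nu S^n)$ is genuinely $n$-dimensional, hence that its orthogonal complement is a line rather than a higher-dimensional space. At a singular point of $\xi_\gamma$ the rank of $(d\xi)_\nu$ can drop, and then orthogonality to $\nu$ no longer pins down a single normal direction — which is precisely why the corollary is restricted to regular points.
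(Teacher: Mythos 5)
Your proof is correct and follows exactly the route the paper intends: the corollary is stated as an immediate consequence of Proposition \ref{PCH}(i), and your argument simply makes explicit why orthogonality of $\nu$ to the image of $(d\xi)_\nu$, together with the immersion hypothesis guaranteeing that image is $n$-dimensional, forces $\nu$ to span the normal line. Your closing remark about why the statement must be restricted to regular points is a nice touch but adds nothing beyond what the paper's one-line deduction already contains.
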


The following proposition is an immediate consequence of $\langle \xi_\gamma(\nu), \nu\rangle=\gamma(\nu)$, which  gives an important relation between $\gamma$ and its Cahn-Hoffman map $\xi_\gamma$.

\begin{proposition}\label{support}
Assume that $\gamma:S^n \to {\mathbb R}_{>0}$ is of $C^2$. Then, $\gamma$ is the support function of $\xi_\gamma$, that is, $\gamma(\nu)$ is the distance between the origin of ${\mathbb R}^{n+1}$ and the tangent hyperplane of $\xi_\gamma$ at the point $\xi_\gamma(\nu)$.
\end{proposition}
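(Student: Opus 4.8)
The plan is to exploit the orthogonal splitting $\xi_\gamma(\nu)=D\gamma|_\nu+\gamma(\nu)\nu$ built into Definition \ref{CHM}. First I would record the inner product $\langle \xi_\gamma(\nu),\nu\rangle$: since $D\gamma|_\nu$ lies in $T_\nu S^n$ and is therefore orthogonal to $\nu$, while $\langle \gamma(\nu)\nu,\nu\rangle=\gamma(\nu)\|\nu\|^2=\gamma(\nu)$, one gets immediately
\[
\langle \xi_\gamma(\nu),\nu\rangle=\gamma(\nu),\qquad \nu\in S^n.
\]
This single computation is what the whole statement rests on.

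Next I would identify the tangent hyperplane of $\xi_\gamma$ at $\xi_\gamma(\nu)$ explicitly. By Proposition \ref{PCH} (i) (equivalently Corollary \ref{CCH2}) the vector $\nu$ is a unit normal to $\xi_\gamma$ at $\xi_\gamma(\nu)$, so Definition \ref{CCH1} tells us the tangent hyperplane is the affine hyperplane through $\xi_\gamma(\nu)$ with normal $\nu$, namely
\[
H_\nu=\{X\in{\mathbb R}^{n+1}\mid \langle X-\xi_\gamma(\nu),\nu\rangle=0\}=\{X\in{\mathbb R}^{n+1}\mid \langle X,\nu\rangle=\gamma(\nu)\},
\]
where the last equality uses the displayed computation. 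This identification is valid at every $\nu\in S^n$, including singular points of $\xi_\gamma$, because the tangent hyperplane of Definition \ref{CCH1} is prescribed purely through $\nu$.

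Finally I would compute the distance from the origin to $H_\nu$. For a hyperplane $\{X\mid \langle X,\nu\rangle=c\}$ with $\|\nu\|=1$, the foot of the perpendicular from the origin is the point $c\nu$, whose norm is $|c|$; hence the distance is $|c|$. Taking $c=\gamma(\nu)$ and using $\gamma(\nu)>0$ removes the absolute value and gives distance exactly $\gamma(\nu)$, which is the assertion.

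There is essentially no deep obstacle here: the mathematical content is just the orthogonality $\langle D\gamma|_\nu,\nu\rangle=0$, immediate from the identification of $T_\nu S^n$ with a linear subspace of ${\mathbb R}^{n+1}$, together with the elementary point-to-hyperplane distance formula. The only points demanding care are bookkeeping ones: invoking Definition \ref{CCH1} so that the phrase \emph{tangent hyperplane} is meaningful even where $\xi_\gamma$ fails to be an immersion, and using the positivity of $\gamma$ to conclude that the signed quantity $\gamma(\nu)$ is the (nonnegative) distance itself rather than its absolute value.
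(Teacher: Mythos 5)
Your proposal is correct and follows exactly the paper's route: the paper states Proposition \ref{support} as an immediate consequence of the identity $\langle \xi_\gamma(\nu), \nu\rangle=\gamma(\nu)$, which is precisely the computation your argument rests on, with the remaining steps (identifying the tangent hyperplane via Definition \ref{CCH1} and applying the point-to-hyperplane distance formula with $\gamma>0$) being the routine details the paper leaves implicit. Your care in noting that Definition \ref{CCH1} makes the tangent hyperplane meaningful even at singular points of $\xi_\gamma$ is a worthwhile clarification, but it does not constitute a different approach.
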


\noindent{\it Proof of Proposition \ref{PCH}.} \ 
(ii) is obvious because $id_{S^n}:S^n\to S^n$ is an immersion. 

We will prove (i). 
Let $(V, \varphi)=(V, (y^1, \cdots, y^n))$ be a local coordinate neighborhood of $S^n$. 
Represent $\xi=(\xi^1, \cdots, \xi^{n+1})$. Then
\begin{equation}\label{dxi}
(d\xi)_{\nu}\Bigl(\frac{\partial}{\partial y^j}\Bigr)_{\nu} = \frac{\partial\xi\circ\varphi^{-1}}{\partial y^j}.
\end{equation}

Set $V':=\varphi(V)$ and 
$$
\psi:=\varphi^{-1}:V' \to V.
$$
The standard Riemannian metric 
$\displaystyle ds^2=\sum_{i,j=1}^ng_{ij}dy^idy^j
$ in $V$ is given by
$$
g_{ij}:=\langle \psi_{y^i}, \psi_{y^j}\rangle.
$$
Set $(g^{ij}):=(g_{ij})^{-1}$, and using the identification 
$$
\frac{\partial}{\partial y^j}=\psi_{y^j},
$$
the gradient $D\gamma$ of $\gamma$ is given as
$$
D\gamma=\sum_{i,j=1}^n g^{ij}\gamma_{y^i}\psi_{y^j}, \quad \gamma_{y^i}:=\frac{\partial(\gamma\circ\varphi^{-1})}{\partial y^i}. 
$$
Therefore, 
$$
\xi=D\gamma+\gamma(\nu)\nu
=\sum_{i,j=1}^n g^{ij}\gamma_{y^i}\psi_{y^j} + \gamma(\nu)\nu,
$$
and hence
\begin{eqnarray}
\frac{\partial\xi}{\partial y^k}
&=&
\sum_{i, j}(g^{ij}\gamma_{y^i}\psi_{y^j})_{y^k}
+ (\gamma(\nu))_{y^k}\nu + \gamma(\nu)\nu_{y^k} \nonumber\\
&=&
\sum_{i, j}(g^{ij}\gamma_{y^i})_{y^k}\psi_{y^j}
+\sum_{i, j}g^{ij}\gamma_{y^i}\psi_{y^jy^k}
+ (\gamma(\nu))_{y^k}\nu + \gamma(\nu)\nu_{y^k} \nonumber\\
&=:& I+II+III+IV. \label{1234}
\end{eqnarray}
It is clear that 
\begin{equation}\label{tan}
\langle I, \nu\rangle=0, \quad \langle IV, \nu\rangle=0
\end{equation}
holds. 
We will compute the normal component of II. 
Differentiate $\langle \nu, \psi_{y^j}\rangle=0$ to get
$$
\langle \nu_{y^k}, \psi_{y^j}\rangle + \langle \nu, \psi_{y^jy^k}\rangle=0.
$$
Hence, 
$$
\langle \nu, \psi_{y^jy^k}\rangle=-\langle \nu_{y^k}, \psi_{y^j}\rangle=h_{kj},
$$
here $\sum_{i, j}h_{ij}dy^idy^j$ is the second fundamental form on $S^n$. This gives
\begin{equation}\label{A1}
B:=\langle\sum_{i, j}g^{ij}\gamma_{y^i}\psi_{y^jy^k}, \nu\rangle
= \sum_{i, j}g^{ij}h_{kj}\gamma_{y^i}.
\end{equation}
In order to prove (\ref{fr1}), let $\nu^\ast$ be the antipodal point of $\nu$, and let $\pi:S^n\setminus\{\nu^\ast\}\to {\mathbb R}^n$ be the stereographic projection from $\nu^\ast$. 
Then, at $\nu$, 
\begin{equation}\label{st}
(g_{ij})=(\delta_{ij}), \quad (h_{ij})=-(\delta_{ij})
\end{equation}
holds. Therefore we have
\begin{equation}\label{A2}
B=-\sum_{i,j}\delta^{ij}\delta_{kj}\gamma_{y^i}=-\gamma_{y^k}.
\end{equation}
By (\ref{dxi}), (\ref{1234}), (\ref{tan}), and (\ref{A2}), we obtain
$$
\langle(d\xi)_{\nu}(u), \nu\rangle=0, \quad \forall u\in T_{\nu}S^n,$$
which proves (i). 
\hfill$\Box$

\section{Curvatures and the regularities of the Wulff shape and the Cahn-Hoffman map}\label{CHC}

The curvatures and the regularities of the Wulff shape and the Cahn-Hoffman map depend on the anisotropic energy density function $\gamma$, which is the subject of this section. We will explain why we assume that $\gamma$ is of $C^2$, not of $C^3$ in our main results. Also we will observe that the assumption $\gamma \in C^2$ is too strong in a certain sence. 

First we recall the following result. 

\begin{remark}\label{MNM} {\rm 
Assume that $\gamma:S^n \to {\mathbb R}_{>0}$ is a convex integrand. 

(i) (F. Morgan \cite{M1991}) $\gamma \in C^{1, 1}$ if and only if $W_\gamma$ is uniformly convex．

(ii) (H. Han and T. Nishimura \cite{HN2017}) $\gamma \in C^1$ if and only if $W_\gamma$ is strictly convex．
}\end{remark}

This result implies that if a convex integrand $\gamma:S^n \to {\mathbb R}_{>0}$ is of $C^1$, then $W_\gamma$ does not include any straight line segment. 
In this case, $\gamma$ is not crystalline, that is, the Wulff sape $W_\gamma$ is not a polyhedral hypersurface. In this sence, $\gamma \in C^1$ is a too strong assumption. 
Actually, if a convex integrand $\gamma:S^n \to {\mathbb R}_{>0}$ is differentiable at all points in $S^n$, then it is of $C^1$ (\cite{R1970}). Therefore, we have:

\begin{remark}\label{non-d} {\rm 
Assume that $\gamma:S^n \to {\mathbb R}_{>0}$ is a convex integrand. Then, $W_\gamma$ includes a straight line segment $\ell$ if and only if $\gamma$ is not differentiable at a certain point $\nu\in S^n$. More precisely, $\nu$ is normal to $\ell$ and $\gamma$ is not partially differentiable at $\nu$ in the direction of $\ell$.
}\end{remark}

Next we consider the case where $\gamma$ is of $C^2$. 

\begin{theorem}\label{key1}
If $\gamma:S^n\to {\mathbb R}_{>0}$ is of $C^2$, then the following (i) and (ii) hold. 

(i) The principal curvatures at any regular point of the Cahn-Hoffman map $\xi_\gamma$ never vanish. 

(ii) For any singular point $\nu \in S^n$ of $\xi_\gamma$, and for any smooth one-parameter family $\nu_t \in S^n$ with $\displaystyle \lim_{t\to \infty} \nu_t=\nu$ of regular points of $\xi_\gamma$ with principal curvatures $\mu_1(t), \cdots, \mu_n(t)$, the limit $\displaystyle \lim_{t\to \infty} |\mu_i(t)|$ exists and it is either $\infty$ or a nonzero real value, ($i=1, \cdots, n$). Moreover, near any singular point $\nu$ of $\xi_\gamma$, 
at least one of the principal curvatures of $\xi_\gamma$ is unbounded．
\end{theorem}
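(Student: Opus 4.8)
The plan is to reduce the entire statement to the linear algebra of the symmetric endomorphism
\[
A := D^2\gamma+\gamma\cdot 1 : T_\nu S^n \to T_\nu S^n ,
\]
using the two facts already in hand: that $d\xi_\gamma = A$ (established in the proof of Theorem \ref{key2}) and that, at a regular point, $\nu$ itself is a unit normal to $\xi_\gamma$ (Corollary \ref{CCH2}). First I would compute the shape operator of $\xi_\gamma$ at a regular point $\nu$. Writing $N$ for the unit normal field of the image, the ``normal equals $\nu$'' property says precisely $N\circ\xi_\gamma=\mathrm{id}_{S^n}$; differentiating gives $dN\circ d\xi_\gamma=\mathrm{id}$, so $dN=A^{-1}$ on the tangent space to the image, which is identified with $\nu^\perp=T_\nu S^n$. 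Hence the shape operator is $\mathcal S=-dN=-A^{-1}$, and the principal curvatures of $\xi_\gamma$ at $\nu$ satisfy $|\mu_i|=1/|\lambda_i|$, where $\lambda_1,\dots,\lambda_n$ are the (real) eigenvalues of the symmetric matrix $A$. Only the vanishing and the magnitudes of the $\mu_i$ enter the theorem, so the overall sign attached to the normal orientation is irrelevant.

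With this identification, part (i) is immediate: at a regular point $d\xi_\gamma=A$ is invertible, so every $\lambda_i\neq0$, whence every $\mu_i$ is a well-defined \emph{nonzero} real number.

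For (ii) I would invoke that $A$ depends continuously on $\nu$ — this is exactly where the hypothesis $\gamma\in C^2$ is used, since the entries of $A$ involve $D^2\gamma$ — together with the continuity of eigenvalues. Along the smooth family $\nu_t\to\nu$ the eigenvalues of $A(\nu_t)$ can be labeled continuously (sorting the continuous unordered spectrum of a one–parameter continuous symmetric family) and converge to those of $A(\nu)$. Since $|\mu_i(t)|=1/|\lambda_i(\nu_t)|$, each sequence either converges to the finite nonzero value $1/|\lambda_i(\nu)|$, when the limiting eigenvalue is nonzero, or tends to $+\infty$, when it is $0$; this is precisely the stated dichotomy. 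Finally, because $\nu$ is a singular point, $d\xi_\gamma=A(\nu)$ fails to be injective, so $\det A(\nu)=0$ and at least one eigenvalue of $A(\nu)$ vanishes; the corresponding principal curvature blows up as one approaches $\nu$ through regular points, which yields the ``Moreover'' assertion.

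The main obstacle I anticipate is not the shape-operator computation but the two points of care in (ii). The first is matching the given principal curvatures $\mu_i(t)$ to a continuously varying branch of eigenvalues of $A(\nu_t)$, so that the individual limits $\lim_{t\to\infty}|\mu_i(t)|$ are genuinely well posed; this rests on continuity of eigenvalue branches of a continuous one-parameter symmetric family. The second is ensuring, for the ``Moreover'' clause, that regular points actually accumulate at $\nu$, since the clause only makes sense where the principal curvatures are defined. This can fail in degenerate situations (for instance if $\gamma$ is affine on an open set, so that $A\equiv0$ and $\xi_\gamma$ collapses to a point with no nearby regular points), so the correct reading is that the supremum of the principal curvatures over the regular points in every neighborhood of $\nu$ is infinite whenever such regular points exist — which is exactly what the vanishing of an eigenvalue of $A(\nu)$, combined with continuity, delivers.
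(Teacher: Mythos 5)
Your proposal is correct and follows essentially the same route as the paper: both identify $d\xi_\gamma=A=D^2\gamma+\gamma\cdot 1$, use Corollary \ref{CCH2} to see that $\xi_\gamma$ inverts its own unit normal field so that the principal curvatures are (up to sign) the reciprocals of the eigenvalues of the symmetric operator $A$, deduce (i) from invertibility of $A$ at regular points, and deduce (ii) from the vanishing of an eigenvalue of $A$ at singular points together with continuity. Your treatment is in fact somewhat more careful than the paper's terse proof of (ii) — the continuous labeling of eigenvalue branches and the caveat about singular points that are not limits of regular points (e.g.\ when $A$ vanishes on an open set and $\xi_\gamma$ collapses) are details the paper leaves implicit.
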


\noindent {\it Proof.} \ 
(i) Note that the operator $D^2\gamma+\gamma\cdot 1$ is symmetric on $S^n$. 
Recall that $\xi_\gamma:S^n \to {\mathbb R}^{n+1}$ is defined by $\xi(\nu)=D\gamma|_\nu +\gamma(\nu)\nu$. Since, at any regular point $\nu \in S^n$ of $\xi_\gamma$, $\nu$ itself gives a unit normal to $\xi_\gamma$ (Corollary \ref{CCH2}), $\xi_\gamma$ gives the inverse of a unit normal vector field along $\xi_\gamma$ itself near $\nu$. 
Hence, $A:=d\xi_\gamma=D^2\gamma+\gamma\cdot 1$ is the inverse of the differential of the unit normal vector field along $\xi_\gamma$, which implies that the  
eigenvalues $\rho_1, \cdots, \rho_n$ of $A$ are the negatives of the reciprocals of the principal curvatures $\mu_1, \cdots, \mu_n$ 
of $\xi_\gamma$ with respect to the unit normal $\nu$. 
This means that $\mu_j=1/\rho_j\ne 0$ for all $j \in \{1, \cdots, n\}$ at any point in $S^n$. 

(ii) If $\nu \in S^n$ is a singular point of $\xi_\gamma$, the matrix $A$ has at least one zero eigenvalue. This fact with the observation above gives the desired result. 
\hfill$\Box$

\vskip0.5truecm

Example \ref{ex4'} gives a good example for Theorem \ref{key1} (ii).


Recall that, if $\gamma:S^n \to {\mathbb R}_{>0}$ is of $C^2$ and convex, then the Cahn-Hoffman map $\xi_\gamma$ gives a parametrization of the Wulff shape $W_\gamma$ (Theorem \ref{key2}). This fact with  Theorem \ref{key1} (ii) gives the following results.

\begin{cor}\label{cor1}
Assume that $\gamma:S^n \to {\mathbb R}_{>0}$ is convex. 
If $\gamma$ is of $C^2$, then 
near any singular point of $W_\gamma$, 
at least one of the principal curvatures of $W_\gamma$ is unbounded．
\end{cor}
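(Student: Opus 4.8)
The plan is to obtain Corollary \ref{cor1} by pushing Theorem \ref{key1}(ii) forward along the parametrization of $W_\gamma$ furnished by the Cahn-Hoffman map. Since $\gamma$ is of $C^2$ and convex, Theorem \ref{key2}(ii) gives $W_\gamma=\hat{W}_\gamma=\xi_\gamma(S^n)$, so $\xi_\gamma:S^n\to{\mathbb R}^{n+1}$ is a $C^1$-front parametrizing the Wulff shape, with differential $d\xi_\gamma=D^2\gamma+\gamma\cdot 1$ symmetric and positive-semidefinite. First I would record the correspondence of the two relevant data under this parametrization: at a point $\nu\in S^n$ where $\xi_\gamma$ is an immersion, $D^2\gamma+\gamma\cdot 1$ is nondegenerate, $\nu$ is a unit normal to $\xi_\gamma$ by Corollary \ref{CCH2}, and the image is locally a smooth hypersurface whose principal curvatures are precisely the $\mu_j=1/\rho_j$ of Theorem \ref{key1}(i), where $\rho_j$ are the eigenvalues of $D^2\gamma+\gamma\cdot 1$. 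Hence the regular points of $W_\gamma$ together with their principal curvatures are exactly the images of immersion points of $\xi_\gamma$ together with the curvatures of $\xi_\gamma$ there, while the singular points of $W_\gamma$ are exactly the images of the singular points of $\xi_\gamma$ (the $\nu$ at which $D^2\gamma+\gamma\cdot 1$ degenerates).

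With this dictionary in hand, the conclusion is immediate: fix a singular point $p$ of $W_\gamma$ and a preimage $\nu\in\xi_\gamma^{-1}(p)$, which by the above is a singular point of $\xi_\gamma$. By the last assertion of Theorem \ref{key1}(ii), arbitrarily near $\nu$ there are regular points of $\xi_\gamma$ at which at least one principal curvature is unbounded; since at such points these curvatures coincide with the principal curvatures of $W_\gamma$, at least one principal curvature of $W_\gamma$ is unbounded in every neighborhood of $p$, which is the claim.

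The main point requiring care — rather than a deep obstacle — is the bookkeeping in the first paragraph. Because $\gamma$ is only assumed convex and not uniformly convex, Theorem \ref{key2}(iii) does not apply and $\xi_\gamma$ need not be an embedding; in particular a singular point $p$ of $W_\gamma$ may have a whole set of normal directions as its $\xi_\gamma$-preimage. I would therefore verify that every such preimage $\nu$ is a singular point of $\xi_\gamma$ (equivalently, that $\xi_\gamma$ cannot be an immersion at a normal direction pointing into a geometric edge or corner of $W_\gamma$), and that approaching $p$ within $W_\gamma$ can be realized by a sequence of regular points converging, under $\xi_\gamma$, to $\nu$ in $S^n$, so that Theorem \ref{key1}(ii) is genuinely applicable along a family of regular points. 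Once this identification is pinned down, no further computation is needed.
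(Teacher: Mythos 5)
Your proposal is correct and follows exactly the paper's route: the paper derives Corollary \ref{cor1} precisely by combining Theorem \ref{key2} (for $\gamma$ of $C^2$ and convex, $\xi_\gamma$ parametrizes $W_\gamma$) with Theorem \ref{key1} (ii), which is your argument. The only difference is that the paper states this combination in one line, whereas you spell out the identification of singular points and principal curvatures of $W_\gamma$ with those of $\xi_\gamma$ — a level of bookkeeping the paper leaves implicit.
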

Examples \ref{ex22}, \ref{ex5} give good examples for Corollary \ref{cor1}. 

The following known result is an immediate consequence of Theorem \ref{key1} (i).

\begin{cor}\label{WCH4}
For $\gamma \in C^2(S^n, {\mathbb R}_{>0})$, 
the following (i) and (ii) are equivalent.

(i) $W_\gamma$ is a closed strictly-convex smooth hypersurface, that is, all of the principal curvatures of $W$ are positive for the inward-pointing unit normal.

(iii) $D^2\gamma+\gamma\cdot 1$ is positive-definite, that is, the eigenvalues are all positive, on the tangent space at each point in $S^n$.
\end{cor}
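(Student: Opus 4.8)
The plan is to prove Corollary \ref{WCH4} by combining the curvature computation from Theorem \ref{key1} (i) with the characterization of convex integrands in Theorem \ref{key2} (ii). The statement asserts the equivalence of two conditions on $\gamma \in C^2(S^n, \mathbb{R}_{>0})$: that $W_\gamma$ is a closed strictly-convex smooth hypersurface (all principal curvatures positive for the inward normal), and that $A := D^2\gamma + \gamma\cdot 1$ is positive-definite everywhere on $S^n$. The key algebraic fact, already established in the proof of Theorem \ref{key1} (i), is that at any regular point $\nu$ of $\xi_\gamma$ the eigenvalues $\rho_1, \dots, \rho_n$ of $A$ are exactly the negatives of the reciprocals of the principal curvatures $\mu_1, \dots, \mu_n$ of $\xi_\gamma$ with respect to the outward normal $\nu$, so $\mu_j = 1/\rho_j$.

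First I would observe that positive-definiteness of $A$ at every point immediately forces $\xi_\gamma$ to be an immersion everywhere (since $d\xi_\gamma = A$ is nonsingular), and moreover $A$ is positive-semidefinite, so by Theorem \ref{key2} (ii) $\gamma$ is a convex integrand and $\xi_\gamma(S^n) = W_\gamma$. In fact uniform convexity (Theorem \ref{key2} (iii)) tells us $\xi_\gamma$ is an embedding onto $W_\gamma$ with $\nu$ the outward-pointing unit normal at $\xi_\gamma(\nu)$. Thus $W_\gamma$ is a smooth embedded hypersurface parametrized by $\xi_\gamma$. The relation $\mu_j = 1/\rho_j$ then shows each principal curvature with respect to the \emph{outward} normal $\nu$ equals $1/\rho_j > 0$; switching to the inward-pointing normal flips the sign, but one must track the sign convention carefully so that the principal curvatures come out positive for the inward normal, matching the strict-convexity definition in Definition \ref{supp} (i). This direction (c) $\Rightarrow$ (i) is thus essentially a bookkeeping assembly of earlier results.

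For the converse (i) $\Rightarrow$ (c), I would argue that if $W_\gamma$ is smooth and strictly convex, then at each point it has a well-defined outward normal realizing every direction $\nu \in S^n$ exactly once (the Gauss map is a diffeomorphism for a smooth strictly convex hypersurface), and $\gamma$, being the support function of $W_\gamma$, recovers $\xi_\gamma$ as the inverse Gauss map; hence $\xi_\gamma$ is an embedding with no singular points. By Theorem \ref{key1} (i) the principal curvatures never vanish, and smoothness gives that they are finite, so each $\rho_j = -1/\mu_j$ is a finite nonzero real. Strict convexity (positivity of the inward principal curvatures) pins down the sign so that all $\rho_j > 0$, i.e.\ $A$ is positive-definite.

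The main obstacle I anticipate is purely the sign and orientation convention: reconciling ``outward-pointing $\nu$'' in the Cahn-Hoffman picture with ``inward-pointing normal'' in the strict-convexity definition, and ensuring that positivity of $A$'s eigenvalues corresponds to positivity of the \emph{inward} principal curvatures rather than introducing a spurious sign flip. Since the corollary is stated as an ``immediate consequence'' of Theorem \ref{key1} (i), I expect the intended proof to be very short, simply citing Theorem \ref{key1} (i) together with the established identity $\mu_j = 1/\rho_j$ and the observation that $W_\gamma$ is smooth precisely when $\xi_\gamma$ has no singular points, which by Theorem \ref{key1} (ii) happens exactly when $A$ has no zero eigenvalue; combined with the never-vanishing and sign information this yields the equivalence with positive-definiteness.
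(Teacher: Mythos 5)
Your overall strategy---assembling Theorem \ref{key1} (i) with Theorem \ref{key2}---is exactly the route the paper intends; the paper itself offers nothing beyond the one-line remark that the corollary is ``an immediate consequence of Theorem \ref{key1} (i)''. Your direction (c) $\Rightarrow$ (i) is essentially correct, and the sign worry you flag is real but resolvable: with the paper's conventions (Remark \ref{rcurv} and Proposition \ref{CHR}, where the Wulff shape has anisotropic mean curvature $-1$ for the \emph{outward} normal), the correct relation is $\mu_j=-1/\rho_j$ with respect to the outward normal $\nu$ (the line ``$\mu_j=1/\rho_j$'' in the proof of Theorem \ref{key1} (i) contradicts the preceding words ``negatives of the reciprocals'' and is evidently a typo), so positive-definiteness of $A$ gives inward principal curvatures $1/\rho_j>0$ directly, with no spurious flip.

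The genuine gap is in your direction (i) $\Rightarrow$ (c), at the words ``$\gamma$, being the support function of $W_\gamma$, recovers $\xi_\gamma$ as the inverse Gauss map''. The corollary assumes only $\gamma\in C^2(S^n,{\mathbb R}_{>0})$; for such a $\gamma$ the support function of $W_\gamma$ is in general \emph{not} $\gamma$ but the smaller convex integrand $\tilde\gamma\le\gamma$ of Remark \ref{conr}, and the two differ precisely when $\gamma$ fails to be convex (Example \ref{ex4'}, where $W_\gamma\subsetneq\xi_\gamma(S^1)$). If $\gamma\ne\tilde\gamma$, then $\xi_\gamma$ has no a priori relation to the Gauss map of $W_\gamma$, so the eigenvalue--curvature dictionary you invoke never applies and the argument does not start. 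What is missing is the lemma: \emph{smoothness of $W_\gamma$ forces $\gamma=\tilde\gamma$}. This is true but requires proof; for instance: if $\tilde\gamma(\nu_0)<\gamma(\nu_0)$ for some $\nu_0$, let $P$ be a point of $W_\gamma$ maximizing $\langle\cdot\,,\nu_0\rangle$ over $\tilde W[\gamma]$, so that $\nu_0$ lies in the normal cone of $W_\gamma$ at $P$ while $\langle P,\nu_0\rangle=\tilde\gamma(\nu_0)<\gamma(\nu_0)$, i.e.\ the constraint at $\nu_0$ is inactive at $P$. Since $P\in\partial\tilde W[\gamma]$ and $S^n$ is compact, there must exist $\nu_*$ with $\langle P,\nu_*\rangle=\gamma(\nu_*)$ (otherwise all constraints would hold with a uniform margin and $P$ would be an interior point), and necessarily $\nu_*\ne\nu_0$; then $\nu_*$ also lies in the normal cone at $P$, so $W_\gamma$ admits two distinct supporting normals at $P$ and is not smooth there---a contradiction. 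Once this lemma is in place, (i) gives that $\gamma$ is convex, Theorem \ref{key2} (ii)--(iii) identifies $\xi_\gamma$ with the inverse Gauss map of $W_\gamma$ and makes $A$ positive-semidefinite, and your finiteness/nonvanishing argument via Theorem \ref{key1} correctly upgrades this to positive-definiteness.
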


Even when some of the principal curvatures of $\xi_\gamma$ are unbounded, the integral of the mean curvature of $\xi_\gamma$ is finite as follows. 

\begin{theorem}\label{hbdd}
Assume that $\gamma:S^n \to {\mathbb R}_{>0}$ is of $C^2$. Then the mean curvature $H$ of the Cahn-Hoffman map $\xi_\gamma$ is defined on $S^n\setminus S(\xi_\gamma)$, and  its improper integral $\displaystyle \int_{S^n\setminus S(\xi_\gamma)} H \;dA$ converges.
\end{theorem}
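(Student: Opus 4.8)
The plan is to express both the mean curvature $H$ and the induced area form $dA$ of $\xi_\gamma$ in terms of the eigenvalues of the symmetric operator $A := D^2\gamma+\gamma\cdot 1$ on $S^n$, and then to observe that the apparent blow-up of $H$ at the singular set $S(\xi_\gamma)$ is exactly cancelled by the degeneration of the area form there, leaving a bounded density against the standard measure of $S^n$.

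First, on the regular set $S^n\setminus S(\xi_\gamma)$ I would recall from the proof of Theorem \ref{key1} (i) that, since $\nu$ itself is a unit normal to $\xi_\gamma$ (Corollary \ref{CCH2}), the operator $A=d\xi_\gamma$ is, up to a fixed sign, the inverse of the differential of the unit normal field along $\xi_\gamma$. Hence the principal curvatures $\mu_1,\dots,\mu_n$ of $\xi_\gamma$ are the reciprocals (up to that sign) of the eigenvalues $\rho_1,\dots,\rho_n$ of $A$, so that $nH=\sum_{j=1}^n\mu_j=\pm\sum_{j=1}^n\rho_j^{-1}$. Next, because $A=d\xi_\gamma$ carries $T_\nu S^n$ isomorphically (on the regular set) onto the tangent space of $\xi_\gamma$, which under the standard identification is again $T_\nu S^n=\nu^\perp$, and because $A$ is self-adjoint, the area form induced by $\xi_\gamma$ is
\begin{equation*}
dA=|\det A|\,dS^n=\Bigl(\prod_{k=1}^n|\rho_k|\Bigr)dS^n,
\end{equation*}
where $dS^n$ is the standard volume form of $S^n$.

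Combining these, on the regular set the density of $n\,H\,dA$ with respect to $dS^n$ is
\begin{equation*}
n H\,|\det A|=\pm\Bigl(\sum_{j=1}^n\frac{1}{\rho_j}\Bigr)\prod_{k=1}^n|\rho_k|=\pm\sum_{j=1}^n\operatorname{sgn}(\rho_j)\prod_{k\ne j}|\rho_k|.
\end{equation*}
The crucial point is that the singular factor $\rho_j^{-1}$ is killed against the vanishing Jacobian factor $|\rho_j|$, leaving only the bounded quantity $\prod_{k\ne j}|\rho_k|$. Since $\gamma\in C^2(S^n)$, the entries of $A$ are continuous and hence each eigenvalue $\rho_k$ is a bounded function on the compact manifold $S^n$; therefore the displayed density is bounded on $S^n\setminus S(\xi_\gamma)$ by some constant $C$, so $|H|\,|\det A|\le C/n$ there.

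Finally, to conclude convergence of the improper integral I would exhaust the open set $S^n\setminus S(\xi_\gamma)$ by an increasing sequence of compact subsets $K_m$ and note that $\int_{K_m}|H|\,dA=\int_{K_m}|H|\,|\det A|\,dS^n\le (C/n)\,\Vol(S^n)<\infty$ uniformly in $m$; letting $m\to\infty$ gives $\int_{S^n\setminus S(\xi_\gamma)}|H|\,dA<\infty$, so $\int_{S^n\setminus S(\xi_\gamma)}H\,dA$ converges absolutely. The only genuine subtlety, and thus the step to carry out most carefully, is the identification of the induced area form as $|\det A|\,dS^n$ together with the algebra that converts the unbounded $\sum_j\rho_j^{-1}$ into the bounded $\sum_j\operatorname{sgn}(\rho_j)\prod_{k\ne j}|\rho_k|$: although $\operatorname{sgn}(\rho_j)$ jumps across $S(\xi_\gamma)$, the overall density remains bounded, which is precisely what makes the integral finite even where some principal curvatures are unbounded.
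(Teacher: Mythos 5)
Your proposal is correct and follows essentially the same route as the paper's proof: both diagonalize $A=d\xi_\gamma=D^2\gamma+\gamma\cdot 1$, use that the principal curvatures are (up to sign) the reciprocals of its eigenvalues $\rho_j$, and cancel the blow-up of $\sum_j\rho_j^{-1}$ against the degenerating Jacobian $\prod_k|\rho_k|$ to obtain the bounded density $\sum_j\prod_{k\ne j}|\rho_k|$, which is controlled because $\gamma\in C^2$ makes the $\rho_k$ bounded on the compact sphere. Your explicit identification $dA=|\det A|\,dS^n$ and the closing exhaustion argument merely spell out steps the paper leaves implicit, so the two proofs are the same in substance.
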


\noindent{\it Proof.} \ 
Take any point $\nu \in S^n$. 
Because $d\xi_\gamma=D^2\gamma+\gamma\cdot 1$ is symmetric, 
we can take a locally defined
frame $\{e_1, \cdots, e_n\}$ on $S^{n}$ such that $(D^2\gamma+\gamma\cdot 1)e_i=\rho_ie_i$ holds at $\nu$, where $\rho_i$ are the eigenvalues of $d\xi_\gamma$. Because of Proposition \ref{PCH} (i), the basis
$\{e_1, \cdots, e_n\}$ at $\nu$ also serves as an orthogonal basis for the tangent hyperplane of $\xi_\gamma$ at $\nu$. 
If $\nu$ is a regular point of $\xi_\gamma$, as the proof of Theorem \ref{key1} (i), $\rho_1, \cdots, \rho_n$ are the negatives of the reciprocals of the principal curvatures $\mu_1, \cdots, \mu_n$ 
of $\xi_\gamma$ with respect to the unit normal $\nu$. 
Hence
$$
d\xi_\gamma=(D^2\gamma+\gamma\cdot  1)= \left(\begin{array}{llllll}
1/\mu_1 \quad 0 \qquad  \quad \cdots \quad \ \ \quad 0 \\
\  0 \qquad 1/\mu_2 \ \  0 \ \  \quad \cdots \quad \ \  0 \\
\ 0 \qquad 0 \qquad \cdot \ \   0\quad \cdots \quad 0\\
\  \cdot   \quad \quad \cdots \ \  \qquad \cdot \ \quad \cdots \quad \cdot\\
\ 0  \qquad \quad \cdots \quad  \quad \ 0 \quad \cdot \quad 0\\
 \ 0 \ \qquad 0 \quad \  \quad \cdots \qquad 0 \ \    1/\mu_n
\end{array} \right). $$
holds. Therefore  we have
\begin{equation}\label{hda}
nH \;dA=(\mu_1+\cdots +\mu_n)\frac{1}{|\mu_1\cdots\mu_n|}\;du_1\wedge\cdots \wedge du_n,
\end{equation}
where $(u_1, \cdots, u_n)$ is the corresponding local coordinate in $S^n$. Here,
\begin{eqnarray}
|\mu_1+\cdots +\mu_n|\frac{1}{|\mu_1\cdots\mu_n|}
&\le& 
\frac{|\mu_1|}{|\mu_1\cdots\mu_n|}
+
\cdots
+
\frac{|\mu_n|}{|\mu_1\cdots\mu_n|} \nonumber\\
&=&
|\rho_2\cdots\rho_n|+ \cdots +|\rho_1\cdots\rho_{n-1}| \label{hda2}
\end{eqnarray}
holds. Since $\gamma$ is of $C^2$, the right hand side of (\ref{hda2}) is bounded, which implies the desired result. 
\hfill$\Box$

\section{Examples}\label{exs}

In this section, we give several examples of integrands $\gamma:S^n \to {\mathbb R}_{>0}$ with various regularities, and we show their Wulff shapes and Cahn-Hoffmann maps. 
Sometimes we will assume $n=1$ or $n=2$ for simplicity. However, it is easy to generalize them to higher dimensional examples (for example, by rotation). Below we use the notation $\nu=(\nu_1, \cdots, \nu_{n+1})$ for $\nu \in S^n$.

\begin{example}[cf.\cite{R}. $\gamma \in C^\infty$, uniformly convex]\label{reilly} {\rm 
Set $n=2$. Let $a_1, a_2, a_3$ be positive constants. Set
$$
\gamma(\nu):=\sqrt{a_1^2\nu_1^2+a_2^2\nu^2_2+a_3^2\nu_3^2}.
$$
Then, $\gamma \in C^\infty$ and it is uniformly convex. $W_\gamma$ is the ellipsoid
$$\frac{x_1^2}{a_1^2}+\frac{x_2^2}{a_2^2}+\frac{x_3^2}{a_3^2}=1.
$$
Since, by the transformation 
$$x_1'=x_1/a_1, \ x_2'=x_2/a_2, \ x_3'=x_2/a_3,
$$
the functional
$${\cal F}_\gamma=\int \sqrt{a_1^2\nu_1^2+a_2^2\nu^2_2+a_3^2\nu_3^2 }\:dA
$$
becomes $a_1a_2a_3$ times the usual area,
an immersion $X=(x_1, x_2, x_3)$ has constant anisotropic mean curvature if and only if $(x_1', x_2', x_3')$ has constant mean curvature.
}\end{example}

\begin{example}[\cite{HKT2013}. $\gamma:S^n\to {\mathbb R}_{\ge 0}$, $\gamma \in C^0$, $\gamma \notin C^1$, $\gamma$ is not convex]\label{reilly2} {\rm
Set $n=2$. 
Define $\gamma\!:\!S^2\to {\mathbb R}_{\ge 0}$ as 
$$\gamma(\nu):=\!\sqrt{|\nu_3^2-\nu_1^2-\nu_2^2|}.$$
Then, ${\cal F}_\gamma(X)$ is the area of the surface $X$ regarded as a surface in the Lorentz-Minkowski space ${\mathbb R}^3_1:=({\mathbb R}^3, dx_1^2+dx_2^2-dx_3^2)$.
Hence, a CAMC surface
is 
 a CMC surface in 
 ${\mathbb R}^3_1$. 
For a graph $z=f(x, y)$, 
$\Lambda$ and the mean curvature $H_L$ as a surface in ${\mathbb R}^3_1$ satisfy
$$
\Lambda=H_L=(1/2)|1-f_x^2-f_y^2|^{-3/2}\Bigl[(1-f_y^2)f_{xx}+2f_xf_yf_{xy}+(1-f_x^2)f_{yy}\Bigr].
$$
Hence, the equation $\Lambda=$ constant is 

(i) elliptic on ``space-like parts'' where $1-f_x^2-f_y^2>0$ holds.

(ii) hyperbolic on ``time-like parts'' where $1-f_x^2-f_y^2<0$ holds.
\newline 
The image of the Cahn-Hoffman map is the elliptic hyperboloid of two sheets (one sheet) for the space-like (time-like) part.

    \begin{figure}[H]
  \centering   \includegraphics[width=30mm,height=30mm,angle=0]{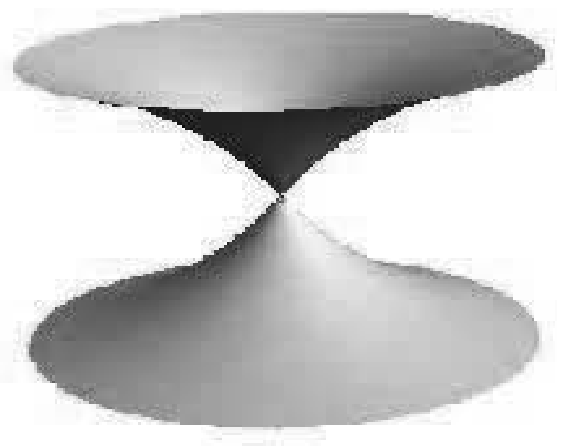}
\hspace{2mm}
\includegraphics[width=50mm,height=30mm,angle=0]{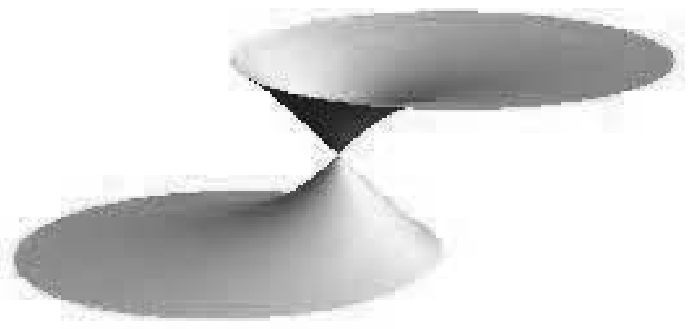}
\hspace{2mm}
\includegraphics[width=20mm,height=25mm,angle=0]{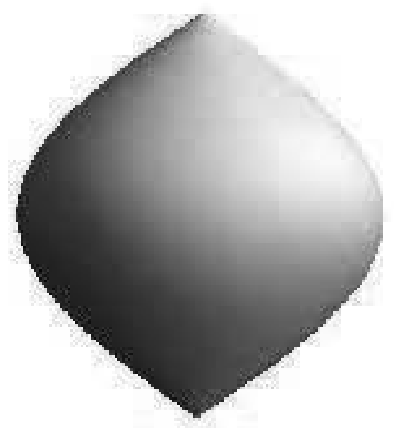}
\hspace{5mm}
\includegraphics[width=18mm,height=25mm,angle=0]{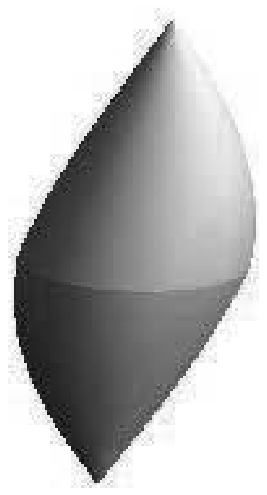}
\caption{Surfaces with zero mean curvature foliated by circles in ${\mathbb R}^3_1$. From the left, the space-like catenoid, the space-like Riemann-type maximal surface, the time-like catenoid, and the time-like Riemann-type minimal surface.}
      \label{fig:ex2}
    \end{figure}
}\end{example}

\begin{example}[cf.\cite{T}. $\gamma \in C^0$ and convex, but $\gamma \notin C^1$]\label{ex1}{\rm
Set
$\displaystyle \gamma(\nu)=\sum_{i=1}^{n+1}|\nu_i|$. 
Then,    
$\gamma \in C^0$ and convex, but $\gamma \notin C^1$. 
$W_\gamma$ is the cube
$
\{
x=(x_1, \cdots, x_{n+1}) \in {\bf R}^{n+1} \;|\;
{\max}\{|x_1|, \cdots, |x_{n+1}|\}=1
\}.
$

We show a picture (Figure \ref{fig:ex1}) for the special case where $n=1$. 
The Wulff shape $W_\gamma$ is the square drawn with straight line segments, and the  
Wulff plot is the dotted curves (Figure \ref{fig:ex1}, left). $\gamma$ is not differentiable at the four points $(\pm 1, 0), (0, \pm 1)$. The image of the Cahn-Hoffman map $\xi_\gamma:S^1\setminus \{(\pm 1, 0), (0, \pm 1)\} \to {\mathbb R}^2$ is the set of the four points $\{(1, 1), (-1, 1), (1, -1), (-1, -1)\}$ (Figure \ref{fig:ex1}, right). 

    \begin{figure}[H]
  \centering   \includegraphics[width=30mm,height=30mm,angle=0]{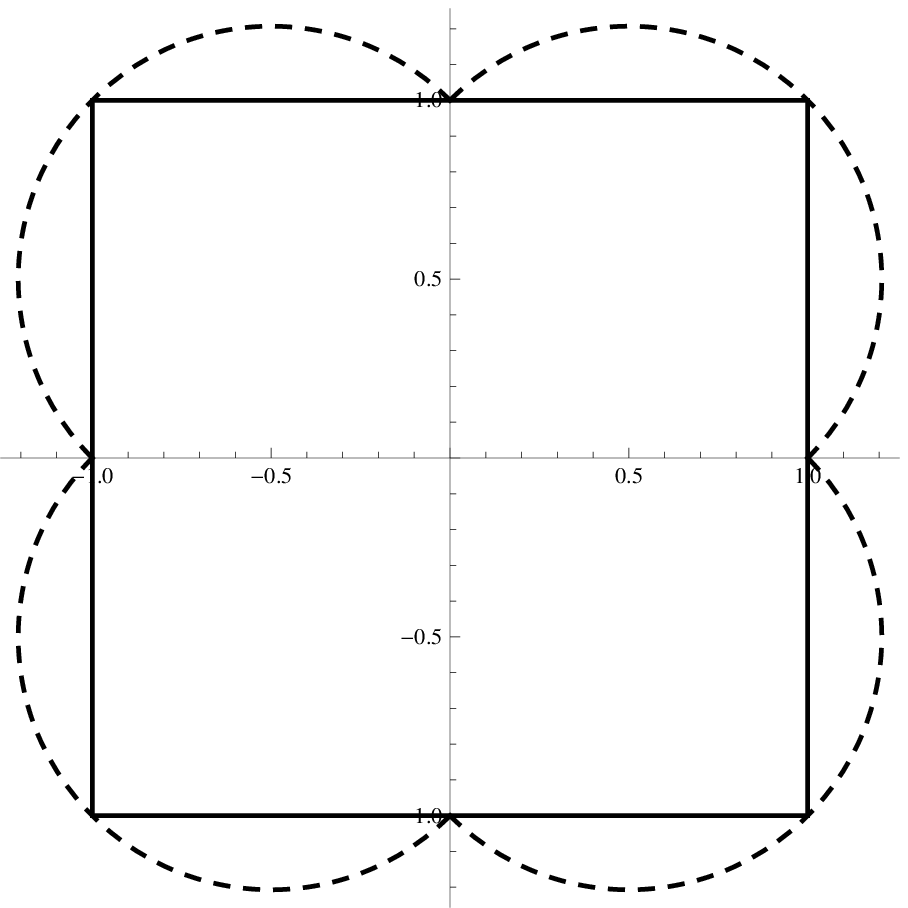}
\hspace{15mm}
\includegraphics[width=23mm,height=23mm,angle=0]{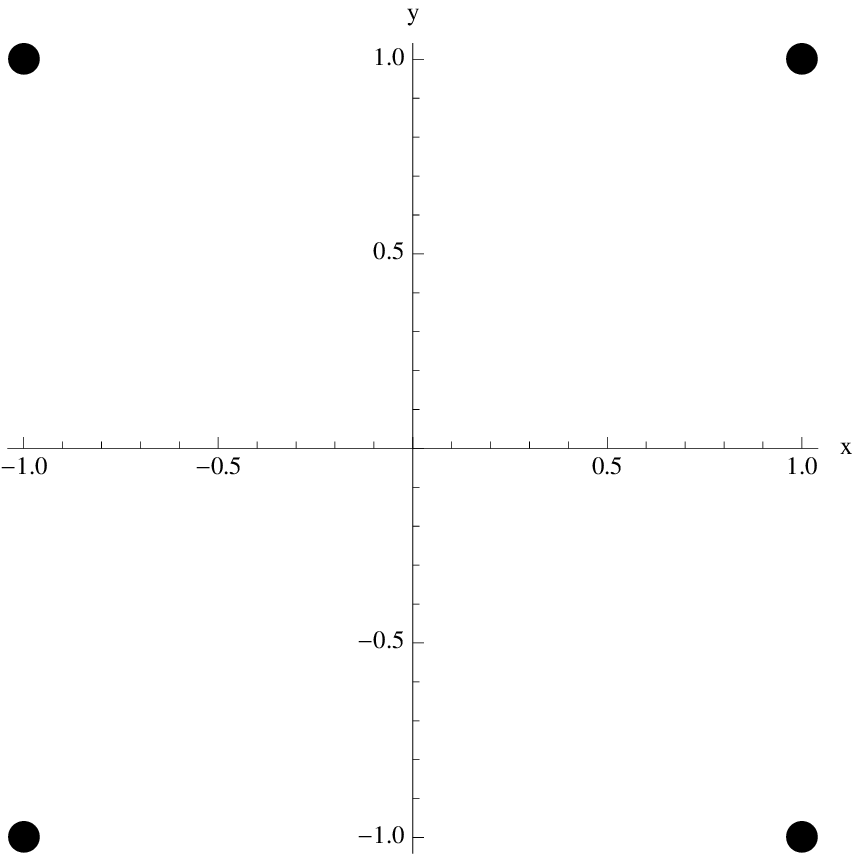}
\caption{Left: The Wulff shape (solid curves) and the Wulff plot (dotted curves) of Example \ref{ex1}. Right: The image of the Cahn-Hoffman map $\xi_{\gamma}:S^1\setminus \{(\pm 1, 0), (0, \pm 1)\} \to {\mathbb R}^2$.}
      \label{fig:ex1}
    \end{figure}
    
}\end{example}

\begin{example}[$\gamma \in C^0$ and convex, but $\gamma \notin C^1$]\label{ex-c}{\rm
Let $r$, $h$ be positive constants. Define $\gamma:S^n\to {\mathbb R}_{>0}$ as follows.
\begin{equation}\label{cyl}
\gamma(\nu)=
r\sqrt{
\nu_1^2+\cdots +\nu_n^2}+h|\nu_{n+1}|, \quad \nu=\nu_1, \cdots, \nu_{n+1} \in S^n.
\end{equation}
Then,    
$\gamma \in C^0(S^n)$ and convex, but $\gamma$ is not differentiable at any point in the following set $S(\gamma)$. 
\begin{equation}\label{sin}
S(\gamma):=\{
(\nu_1, \cdots, \nu_{n+1})\in S^n\;|\;\nu_{n+1}=0, \pm 1
\}.
\end{equation}
The Wulff shape $W_\gamma$ is the cylinder $C$ with radius $r$ and height $2h$ covered with two flat discs with radius $r$, that is
\begin{equation}\label{cy2} 
W_\gamma=
C\cup \Gamma_+\cup\Gamma_-,
\end{equation}
where
\begin{eqnarray}
C&=&
\{
(x_1, \cdots, x_{n+1}) \in {\mathbb R}^{n+1}\;|\;
\sqrt{x_1^2 + \cdots + x_n^2}=r, \ x_{n+1}\in [-h, h]
\},
\\
\Gamma_{\pm}&=&
\{
(x_1, \cdots, x_n, \pm h) \in {\mathbb R}^{n+1}\;|\;
\sqrt{x_1^2 + \cdots + x_n^2} \le r
\}.
\end{eqnarray} 
The Cahn-Hoffman map $\xi:S^n\setminus S(\gamma) \to {\mathbb R}^{n+1}$ is given as follows. 
\begin{equation}\label{cy3}
\xi(p\cos\theta, \sin\theta)=(rp, {\rm sgn}(\sin\theta)\cdot h), \quad p \in S^{n-1}, \ \theta \in (-\pi/2, 0)\cup (0, \pi/2),
\end{equation}
that is, $\xi$ maps a quater circle to one point, and the image $\xi(S^n\setminus S(\gamma))$ is the union of two circles with radius $r$ with height $\pm h$ as follows. 
\begin{equation}\label{xic}
\xi(S^n\setminus S(\gamma))=
\{
(x_1, \cdots, x_n, \pm h) \in {\mathbb R}^{n+1}\;|\;
\sqrt{x_1^2 + \cdots + x_n^2} = r
\}
\end{equation} 
(Figure \ref{fig:cyl}). 

Here we give a comment about the relationship between the singular points of $\gamma$ and the flat faces of $W_\gamma$. Each singular point $(0, \cdots, 0, \pm 1)$ corresponds to the $n$-dimensional disc $\Gamma_{\pm}$, and each singular point $(p, 0):=(\nu_1, \cdots, \nu_n, 0)$ corresponds to the straight line segment $\{(rp, x_{n+1}) \in {\mathbb R}^{n+1}\;|\;-h\le x_{n+1}\le h\}$.
\vskip0.8truecm

\begin{figure}[H]
  \centering   
  
\includegraphics[width=30mm,height=30mm,angle=0]{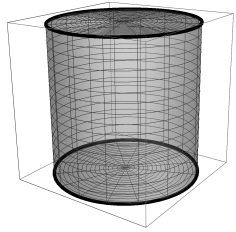}
  

      \caption{The Wulff shape (a cylinder covered with two flat discs) and the image of the Cahn-Hoffman map (two thick circles at the top and at the bottom) of Example \ref{ex-c}}
      \label{fig:cyl}
    \end{figure}
}\end{example}

\begin{example}[$\gamma \in C^\infty$ and convex, but not uniformly convex]\label{ex22}{\rm 
Set $n=1$. For $m \in {\mathbb N}$, 
define $\gamma_m(\nu):=(\nu_1^{2m}+\nu_2^{2m})^{1/(2m)}$. 
Then $\gamma$ is of $C^\infty$ and convex. 
$\gamma$ is uniformly convex for $m=1$, and it is not uniformly convex for $m\ge 2$. 

    \begin{figure}[H]
  \centering   \includegraphics[width=23mm,height=23mm,angle=0]{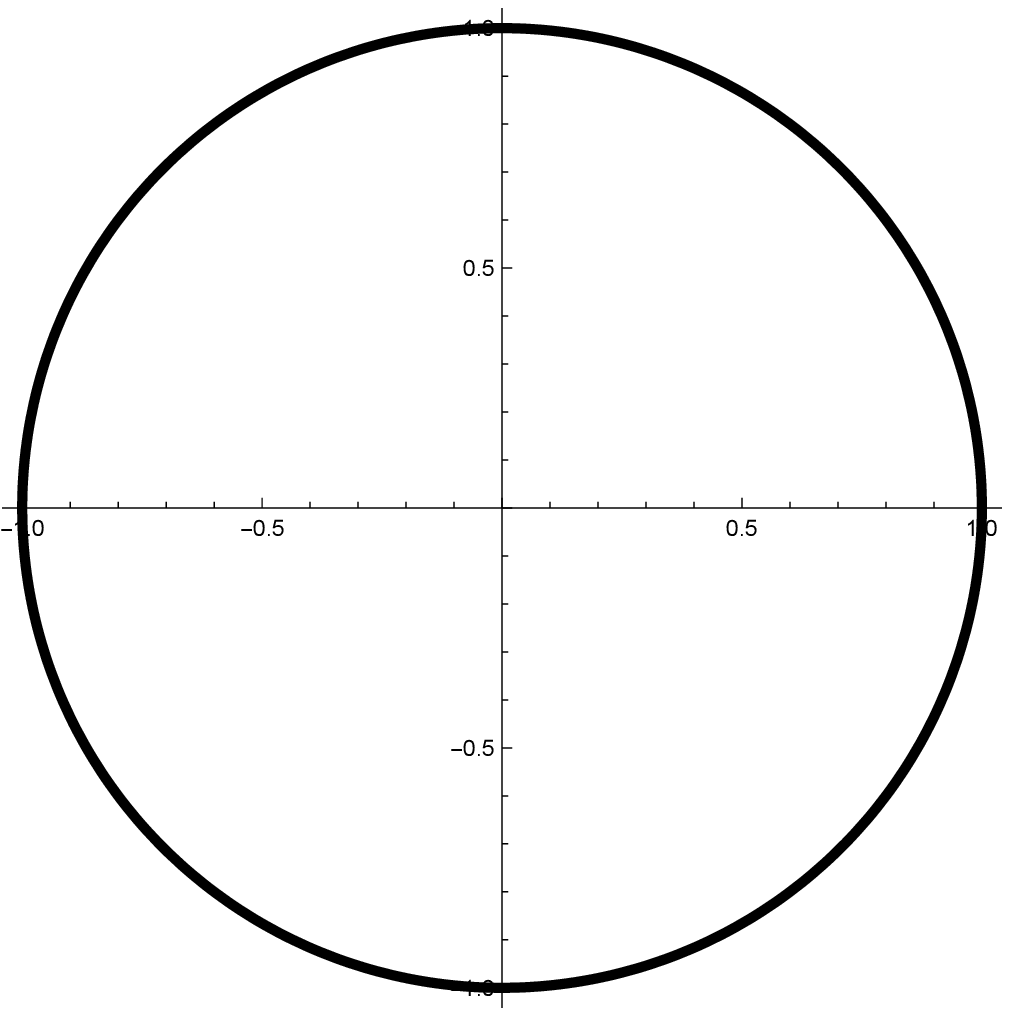}
\hspace{15mm}
\includegraphics[width=23mm,height=23mm,angle=0]{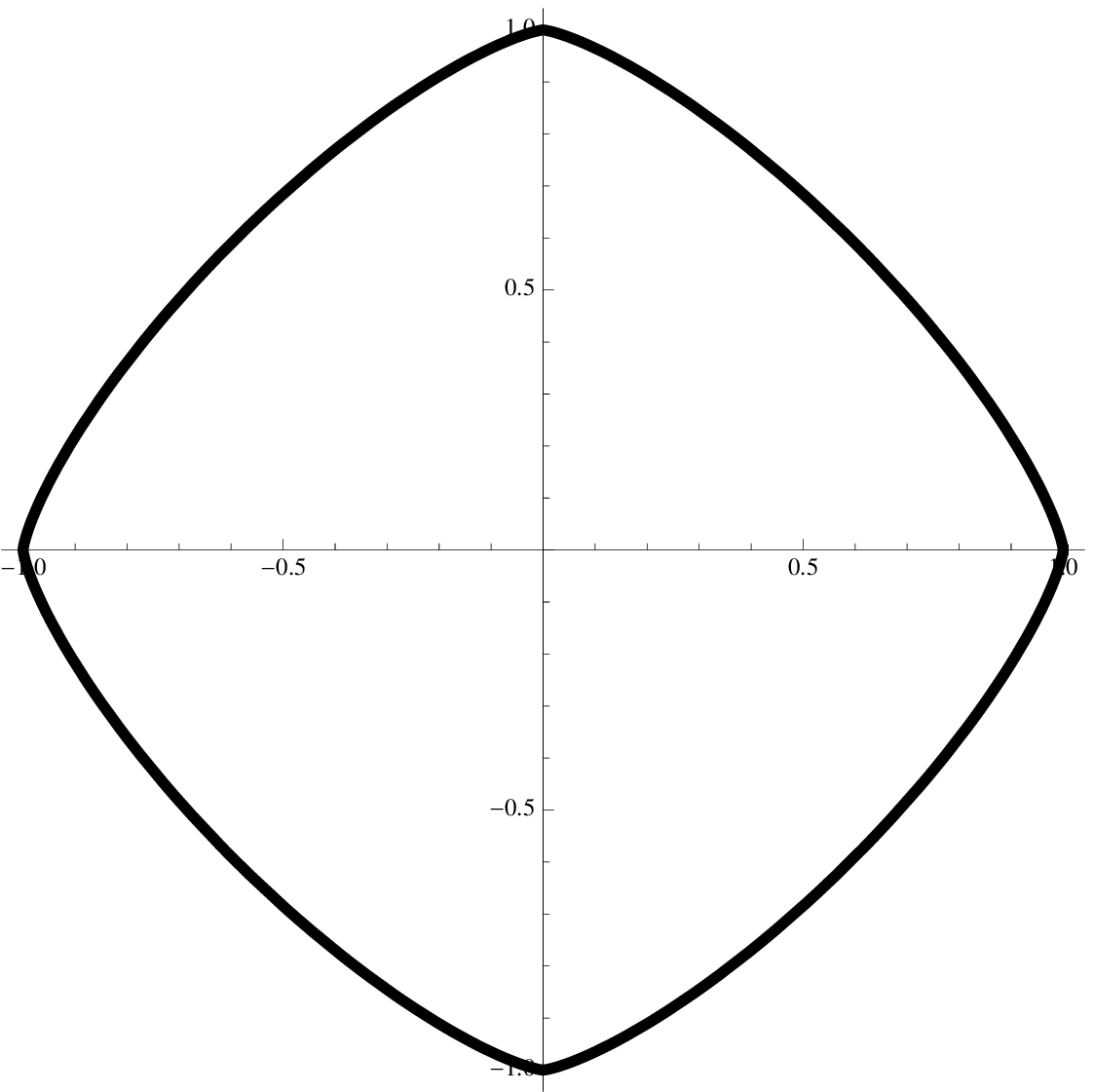}
\caption{Left: $W_\gamma$ for $m=1$, right: $W_\gamma$ for $m=2$, for $\gamma_m(\nu):=(\nu_1^{2m}+\nu_2^{2m})^{1/(2m)}$ in Example \ref{ex22}.}
      \label{fig:ex22}
    \end{figure}

By computations (see the appendix \S \ref{proof:ex22}), we obtain the following results. Set $\nu = (\cos\theta, \sin\theta)$. 
Then, 
the Cahn-Hoffman map $\xi_m$ for $\gamma_m$ is represented as follows.
\begin{eqnarray}
\xi_m(\cos\theta, \sin\theta)&:=& (f_m(\theta), g_m(\theta)) \nonumber\\
&=&
(\cos^{2m}\theta+\sin^{2m}\theta)^{(1/(2m))-1}(\cos^{2m-1}\theta, \sin^{2m-1}\theta). \label{kusi2}
\end{eqnarray}
Also we have
\begin{eqnarray}
A_m&:=&
D^2\gamma_m +\gamma_m \cdot 1
\nonumber \\
&=&(2m-1)\cos^{2m-2}\theta \sin^{2m-2}\theta(\cos^{2m}\theta+\sin^{2m}\theta)^{(1/(2m))-2}.
\end{eqnarray}
Hence, 

(i) If $m \ge 2$, $A_m=0$ at $\theta=(1/2)\ell \pi$, ($\ell\in \mathbb Z$).

(ii) $A_m$ is positive-definite on $S^1\setminus\{(\cos\theta, \sin\theta) \:|\:
\theta=(1/2)\ell \pi, (\ell\in \mathbb Z)\}$. 
\newline The curvature $\kappa_m$ of $\xi_m$ with respect to the outward-pointing normal $\nu$ is represented as
\begin{eqnarray}
\kappa_m &=&
\frac{-f_m'g_m''+f_m''g_m'}{((f_m')^2+(g_m')^2)^{3/2}} \nonumber \\
&=&\frac{-1}{2m-1}\cos^{-2m+2}\theta\sin^{-2m+2}\theta(\cos^{2m}\theta+\sin^{2m}\theta)^{2-\frac{1}{2m}}.
\end{eqnarray}
Hence, for any $\ell \in {\mathbb Z}$ and $m \ge 2$, 
\begin{equation}
\lim_{\theta \to \frac{\ell}{2}\pi}\kappa_m(\theta)=-\infty
\end{equation}
holds. 
However, since
\begin{equation}
\kappa_m \;ds
=-d\theta
\end{equation}
holds,  
\begin{equation}\label{tot10}
\int_0^{2\pi}\kappa_m\;ds
=-\int_0^{2\pi}d\theta
=-2\pi.
\end{equation}
However, (\ref{tot10}) is trivial because $\xi_m:S^1\to{\mathbb R}^2$ is a front with $\nu \in S^1$.
}\end{example}
    
\begin{example}[$\gamma \in C^1$ and convex, but $\gamma \notin C^2$]\label{ex5}{\rm 
Set $n=1$. 
Let $C_r$ be the convex closed curve passing the four points 
$(1, 1), (1, -1), (-1, 1), (-1, -1)$ which is the union of four circular arcs with radius $r\ge\sqrt{2}$ (Figure \ref{fig:ex5}, left). $C_{\sqrt{2}}$ is the circle with center at $(0, 0)$. 
For simplicity, we regard a straight line segment as a circular arc with radius $r=\infty$. We will write any point $\nu \in S^1$ as $\nu = (\cos\theta, \sin\theta)$. 
The support function $\gamma_r:S^1 \to {\mathbb R}_{>0}$ of $C_r$ is given by the symmetric extension of the following equation.
\begin{eqnarray}
\gamma_r(\theta)=\left\{ \begin{array}{ll}
\cos\theta+\sin\theta, & \pi/4 \le \theta \le (\pi/2) - \sin^{-1}(1/r),\\
r+(1-\sqrt{r^2-1})\sin\theta, & (\pi/2) - \sin^{-1}(1/r) < (\pi/2) + \sin^{-1}(1/r), \\
|\cos\theta|+\sin\theta, & (\pi/2) + \sin^{-1}(1/r) \le \theta \le (3/4)\pi.\\
\end{array} \right.
\end{eqnarray} 
Note
$$
\gamma_\infty(\theta):=\lim_{r \to \infty} \gamma_r(\theta)=|\cos\theta|+|\sin\theta|, \quad 0 \le \theta \le 2\pi,
$$
which coincides with $\gamma$ in Example \ref{ex1}. 

When $\sqrt{2} < r <\infty$, by computation we can show that $\gamma_r \in C^1$ and convex, but $\gamma_r \notin C^2$. 

If $\sqrt{2} \le r <\infty$, both $\xi_{\gamma_r}(S^1)$ and $W_{\gamma_r}$ coincide with $C_r$. 
When, $r=\infty$, The image of the Cahn-Hoffman map $\xi_{\gamma_{\infty}}:S^1\setminus \{(\pm 1, 0), (0, \pm 1)\} \to {\mathbb R}^2$ is the set of the four points $\{(1, 1), (-1, 1), (1, -1), (-1, -1)\}$ (Figure \ref{fig:ex5}, right).
    
    \begin{figure}[H]
  \centering   \includegraphics[width=30mm,height=30mm,angle=0]{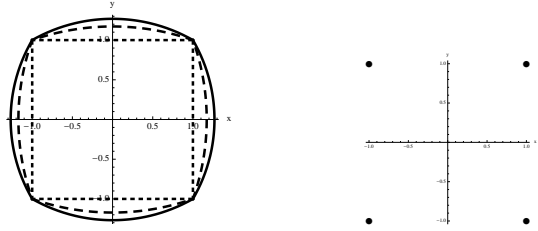}
\hspace{15mm}
\includegraphics[width=23mm,height=23mm,angle=0]{fourPoints3RC.eps}
\caption{Left: The closed curves $C_r$ in Example \ref{ex5}. 
The solid curve: $r=2$，the dashed curve: $r=3$, the dotted curve: $r=+\infty$. 
When $\sqrt{2} \le r <\infty$, $C_r=W_{\gamma_r}=\xi_{\gamma_r}(S^1)$. Right: The image of the Cahn-Hoffman map $\xi_{\gamma_{\infty}}:S^1\setminus \{(\pm 1, 0), (0, \pm 1)\} \to {\mathbb R}^2$.}
      \label{fig:ex5}
    \end{figure}
}\end{example}
    
\begin{example}[$\gamma \in C^\infty$, and $\gamma$ is not convex]\label{ex4'}{\rm 
Set $n=1$. Define $\gamma$ as 
$\gamma(\nu)=4\nu_1^3-3\nu_1+2$. 
Then, 
$\gamma \in C^\infty$, and $\gamma$ is not convex. 
The whole of the closed curve with self-intersection in Figure \ref{fig:ex4'} is the image $\hat{W}_\gamma:=\xi_\gamma(S^1)$ of the Cahn-Hoffman map $\xi_\gamma$, while the closed convex solid curve that is a proper subset of $\hat{W}_\gamma$ is the Wulff shape $W_\gamma$. By computation, we obtain
\begin{equation}
\gamma(\cos\theta, \sin\theta)=4\cos^3\theta-3\cos\theta+2=:\gamma(\theta),
\end{equation}
\begin{equation}
\xi_\gamma(\theta)=(8\cos^2\theta\sin^2\theta+4\cos^2\theta+2\cos\theta-3, 
-8\cos^3\theta\sin\theta+2\sin\theta)=:(f(\theta), g(\theta)),
\end{equation}
\begin{equation}
A:=d\xi_\gamma
=2(-16\cos^3\theta+12\cos\theta+1)(-\sin\theta, \cos\theta)
=:a(\theta)(-\sin\theta, \cos\theta),
\end{equation}
\begin{equation}
\kappa_\gamma(\theta):=
\frac{-f'g''+f''g'}{((f')^2+(g')^2)^{3/2}}
=\frac{-1}
{2|-16\cos^3\theta+12\cos\theta+1)|}
=\frac{-1}
{|a(\theta)|}.
\end{equation}
Hence, for any $\rho \in a^{-1}(0)$, 
\begin{equation}
\lim_{\theta(\notin a^{-1}(0))\to\rho}\kappa_\gamma(\theta)=-\infty.
\end{equation}
$a^{-1}(0) \subset S^1$ is the set of exactly six points, and they correspond to the singular points of $\xi_\gamma$ (Figure \ref{fig:ex4'}). 

\begin{figure}[H]
  \centering
   \includegraphics[width=45mm,height=45mm,angle=0]{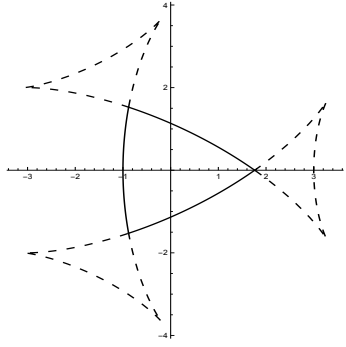}
      \caption{The image of the Cahn-Hoffman map $\xi_\gamma$ for $\gamma$ in Example \ref{ex4'}. The six vertices are the image of the singular points of $\xi_\gamma$. The closed convex solid curve is the Wulff shape $W_\gamma$.}
      \label{fig:ex4'}
    \end{figure}
}\end{example}

\section{Anisotropic shape operator and anisotropic curvatures}\label{a-curv}

Assume that $\gamma:S^n \to {\mathbb R}_{>0}$ is of $C^2$. 
Let $M_0$ be an oriented compact connected $n$-dimensional $C^\infty$ manifold with smooth boundary $\partial M_0$. 
As in \S \ref{1st}, we assume that  a map $X:M_0\to {\mathbb R}^{n+1}$ satisfies the conditions (A1), (A2), and (A3) in \S \ref{pre} with $r=2$, $X_i=X$, $M_i=M_0$, and $\nu_i=\nu$.

\begin{definition}\label{CHF}{\rm
The Cahn-Hoffman field along $X$ (for $\gamma$) is defined as
\begin{equation}\label{CHFX}
\tilde{\xi}:=\tilde{\xi}_\gamma:=\xi\circ \nu:M_0 \to {\mathbb R}^{n+1},
\end{equation}
where
$
\xi(\nu):=\xi_\gamma(\nu) =D\gamma+\gamma(\nu)\nu
$, ($\nu \in S^n$), 
is the Cahn-Hoffman map for $\gamma$. 
$\tilde{\xi}_\gamma$ is called also the anisotropic Gauss map of $X$ (for $\gamma$).
}
\end{definition}

\begin{definition}[anisotropic shape operator, cf. \cite{R}, \cite{HLMG2009}]\label{shape}{\rm 
(i) The linear map $S^\gamma_p:T_p M_0 \to T_p M_0$ given by the $n\times n$ matrix 
$
S^\gamma:=-d\tilde{\xi}_\gamma
$
is called the anisotropic shape operatior of $X$.

(ii) If $S^\gamma_p$ is proportional to the identity map, we say that $p$ is anisotropic-umbilic. 
}\end{definition}

%

\begin{definition}[anisotropic curvatures, cf. \cite{R}, \cite{HLMG2009}]\label{curv}{\rm 
(i) The eigenvalues of $S^\gamma$ are called the anisotropic principal
curvatures of $X$. We denote them by $k^\gamma_1, \cdots, k^\gamma_n$.

(ii) Let $\sigma^\gamma_r$ be the elementary symmetric functions of $k^\gamma_1, \cdots, k^\gamma_n$, that is
\begin{equation}\label{curv2}
\sigma^\gamma_r :=\sum_{1\le l_1< \cdots <l_r\le n} k^\gamma_{l_1}\cdots k^\gamma_{l_r}, \quad r=1, \cdots, n.
\end{equation}
We set $\sigma^\gamma_0:=1$. 
The $r$-th anisotropic mean curvature $H^\gamma_r$ of $X$ is defined by $H^\gamma_r:=\sigma^\gamma_r/{}_nC_r$.

(iii) $\Lambda^\gamma:=H^\gamma_1$ is called  the anisotropic mean curvature of $X$. $\Lambda^\gamma$ is often written as $\Lambda$ for simplicity.  
}\end{definition}

\begin{remark}\label{rcurv}{\rm 
Anisotropic principal curvatures of $\xi_\gamma$ are $-1$. 
Hence, each $r$-th anisotropic mean curvature of $\xi_\gamma$ is $(-1)^r$.
}\end{remark}  

\begin{lemma}\label{HL}
(i) (\cite{HL2008}) If $A:=d\xi=D^2 \gamma+\gamma \cdot 1$ is positive definite, then all of the anisotropic principal curvatures of $X:M_0\to {\mathbb R}^{n+1}$ are real. 

(ii) $k^\gamma_i$ is not a real value in general. However, 
each $H^\gamma_r$ is always a real valued function on $M_0$. 
\end{lemma}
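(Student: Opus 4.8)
The plan is to reduce everything to one structural fact: the anisotropic shape operator $S^\gamma=-d\tilde\xi_\gamma$ is a genuine \emph{real} linear endomorphism of the $n$-dimensional real vector space $T_pM_0$. Writing $\tilde\xi_\gamma=\xi\circ\nu$ and applying the chain rule gives $d\tilde\xi_\gamma=(d\xi)_\nu\circ d\nu$; by Proposition \ref{PCH} (i) the image of $(d\xi)_\nu$ is orthogonal to $\nu$, so $d\tilde\xi_\gamma$ carries $T_pM_0$ into $\nu^\perp$, which we identify with $T_pM_0$. Under this identification one has $S^\gamma=A\circ S$, where $A=d\xi=D^2\gamma+\gamma\cdot 1$ is symmetric and $S=-d\nu$ is the ordinary shape operator of $X$, also symmetric. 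In particular, in any real basis of $T_pM_0$, the operator $S^\gamma$ is represented by a real $n\times n$ matrix, even though it need not be symmetric.

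For the second assertion of (ii) I would argue as follows. Since $S^\gamma$ is a real endomorphism, its characteristic polynomial
$$
\det(tI-S^\gamma)=t^n-\sigma^\gamma_1 t^{n-1}+\sigma^\gamma_2 t^{n-2}-\cdots+(-1)^n\sigma^\gamma_n
$$
has real coefficients, and those coefficients are precisely the signed elementary symmetric functions $(-1)^r\sigma^\gamma_r$ of the eigenvalues $k^\gamma_1,\dots,k^\gamma_n$. Hence every $\sigma^\gamma_r$ is real, and therefore $H^\gamma_r=\sigma^\gamma_r/{}_nC_r$ is a real-valued function on $M_0$. Equivalently, even when some $k^\gamma_i$ fails to be real it is paired with its complex conjugate, again because $S^\gamma$ is a real matrix, so the symmetric functions stay real. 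This is the only point that needs care, and it is quite soft.

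For the first assertion of (ii), that the $k^\gamma_i$ need not be real, I would simply exhibit an example. As a product of two symmetric matrices, $S^\gamma=A\circ S$ can fail to be symmetrizable exactly when $A$ is indefinite, i.e. when $\gamma$ is not convex (recall Theorem \ref{key2} (ii) equates convexity with positive semidefiniteness of $A$). Taking $n=2$ and a point where, in a suitable orthonormal basis, $A=\begin{pmatrix}1&0\\0&-1\end{pmatrix}$ and $S=\begin{pmatrix}0&1\\1&0\end{pmatrix}$, one gets $S^\gamma=\begin{pmatrix}0&1\\-1&0\end{pmatrix}$, whose eigenvalues are $\pm\sqrt{-1}$. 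It remains only to note that such data are realizable: an indefinite $A$ arises from a non-convex $C^2$ integrand (in the spirit of Example \ref{ex4'}), and any prescribed symmetric $S$ is the shape operator of some immersed surface, so both can be matched at a common point with common normal.

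Finally, part (i) is quoted from \cite{HL2008}; for completeness I would remark that it follows from the same presentation, since when $A$ is positive definite the product $A\circ S$ is similar, via conjugation by $A^{1/2}$, to the symmetric matrix $A^{1/2}SA^{1/2}$ and hence has only real eigenvalues. The main obstacle here is not any hard computation but getting the identification $S^\gamma=A\circ S$ and the orthogonality from Proposition \ref{PCH} set up cleanly; once that is in place, the realness of the $H^\gamma_r$ is immediate and the non-realness of individual $k^\gamma_i$ is a one-line example.
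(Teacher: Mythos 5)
Your proof is correct and takes essentially the same route as the paper: for (i) you conjugate $S^\gamma=A\circ S$ by $A^{1/2}$ to reach the symmetric matrix $A^{1/2}SA^{1/2}$, while the paper conjugates by $\tilde{P}$ with $A=\tilde{P}\,{}^t\!\tilde{P}$ from the eigendecomposition (the same square-root trick), and for the realness of the $H^\gamma_r$ you both observe that the characteristic polynomial of the real matrix $S^\gamma$ has real coefficients, which are the signed elementary symmetric functions $\sigma^\gamma_r$ (the paper phrases this via $\det(I_n+\tau S^\gamma)$). Your one addition is the explicit $2\times 2$ example with indefinite $A$ producing eigenvalues $\pm\sqrt{-1}$, which substantiates the claim ``$k^\gamma_i$ is not real in general'' that the paper states without proof; your realizability sketch for that example is sound.
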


\noindent {\it Proof. } \ 
The outline of the proof of (i) was given in \cite[p.699]{HL2008}. We will  give a detailed proof in \S \ref{pfd}. 
(ii) is proved as follows. 
Denote by $I_n$ the $n\times n$ identity matrix. 
We compute
\begin{eqnarray}
\det(I_n+\tau S^\gamma)&=&
(1+\tau k^\gamma_1)\cdots(1+\tau k^\gamma_n) \\
&=&\sum_{r=0}^n \tau^r \sum_{1 \le l_1< \cdots l_r\le n}
k^\gamma_{l_1}\cdots k^\gamma_{l_r}\\
&=& \sum_{r=0}^n \tau^r \sigma^\gamma_r 
= \sum_{r=0}^n ({}_nC_r) H^\gamma_r\tau^r, \label{exp3}
\end{eqnarray}
which shows that each $H^\gamma_r$ is a real valued function on $M_0$. 
\hfill $\Box$

\begin{remark}[cf. \cite{KP2005}]\label{AMC2}{\rm 
Take any point $p \in M_0^o$. Assume that $\nu(p)$ is a regular point of $(D^2\gamma+\gamma\cdot 1)$. We compute the anisotropic mean curvature $\Lambda(p)$ of $X$ at $p$. 
Let $\{e_1, \cdots, e_n\}$ be a locally defined
frame on $S^{n}$ such that $(D^2\gamma+\gamma\cdot 1)e_i=(1/\mu_i)e_i$, where $\mu_i$ are the principal curvatures of $\xi_\gamma$ with respect to $\nu$. 
Note that the basis
$\{e_1, \cdots, e_n\}$ at $\nu(p)$ also serves as an orthogonal basis for the tangent hyperplane of $X$ at $p$. Let $(-w_{ij})$ be the matrix
representing $d\nu$ with respect to this basis. Then
$$
-S^\gamma=d\xi_\gamma\circ d\nu=(D^2\gamma+\gamma\cdot  1)d\nu= \left(\begin{array}{lllll}
-w_{11}/\mu_1 \quad \cdots \quad   -w_{1n}/\mu_1 \\
\ \qquad \cdot \ \ \ \quad \quad \cdots \ \ \quad\qquad \cdot\\
\ \qquad \cdot \ \ \ \quad \quad \cdots \ \ \quad\qquad \cdot\\
\ \qquad \cdot \ \ \ \quad \quad \cdots \ \ \quad\qquad \cdot\\
-w_{n1}/\mu_n \quad \cdots \quad   -w_{nn}/\mu_n
\end{array} \right). $$
This with (\ref{trace}) gives 
\begin{equation}\label{eqR2}
\Lambda=(1/n)(w_{11}/\mu_1 +\cdots + w_{nn}/\mu_n).
\end{equation}
}\end{remark}

For later use, we give some notations and formulas using local coordinates $(u_1, \cdots, u_n)$ in $M_0$.
Set as usual
$$
g_{ij}:=\langle X_i, X_j\rangle, \ 
h_{ij}:=-\langle \nu_i, X_j\rangle
=-\langle X_i, \nu_j\rangle, 
$$
where
$$
X_i=X_{u_i}, \ {\rm etc.}
$$
At any regular point of $X$, we set
$$
(g^{ij})=(g_{ij})^{-1}. 
$$
By using the Einstein summation convention, we have
$$
\nu_i=-h_{ij}g^{jk}X_k,  
$$
and $d\nu$ can be represented as an $n\times n$ matrix
$$
d\nu=-(h_{ij})(g^{ij}).
$$

For the Cahn-Hoffman map $
\xi:S^n \to {\mathbb R}^{n+1}$, 
consider its differential 
$$
(d\xi)_{\nu}:T_{\nu} S^n \to T_{\xi(\nu)}{\mathbb R}^{n+1}={\mathbb R}^{n+1}, \quad 
d\xi=D^2 \gamma+\gamma \cdot 1.
$$
For the Cahn-Hoffman field $\tilde{\xi}:=\xi\circ \nu:M \to {\mathbb R}^{n+1}$ along $X$, set
\begin{equation}\label{dxi2}
-\tilde{h}_{ij}:=\langle \tilde{\xi}_i, X_j\rangle, \quad \tilde{\xi}_i:=\tilde{\xi}_{u_i}.
\end{equation}
Then, it is easy to show that
\begin{equation}\label{dxi3}
\tilde{\xi}_i=-\tilde{h}_{il}g^{lj}X_j
\end{equation}
holds. 
In fact, since $\tilde{\xi}_i$ is perpendicular to $\nu$ (Proposition \ref{PCH}), we can write
\begin{equation}\label{dxi4}
\tilde{\xi}_i=a_i^j X_j,
\end{equation}
which gives
$$
-\tilde{h}_{il}=\langle \tilde{\xi}_i, X_l\rangle=\langle a_i^j X_j, X_l\rangle =a_i^jg_{jl},
$$
and so we have
$$
(-\tilde{h}_{il})=(a_i^j)(g_{jl}). 
$$
Hence
$$
(a_i^j)=(-\tilde{h}_{il})(g^{lj}),
$$
which gives (\ref{dxi3}). 
Therefore $d\tilde{\xi}$ is represented as an $n\times n$ matrix
\begin{equation}\label{dxi5}
d\tilde{\xi}=-(\tilde{h}_{ij})(g^{ij}).
\end{equation}
Since $S^\gamma=
-d\tilde{\xi}$, we have the following. 
\begin{lemma}\label{shapeL}
$S^\gamma=
(\tilde{h}_{ij})(g^{ij})$ holds.
\end{lemma}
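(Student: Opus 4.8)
The statement is, in effect, a bookkeeping consequence of two facts already in place: the definition $S^\gamma = -d\tilde{\xi}$ from Definition \ref{shape}, and the local matrix representation $d\tilde{\xi} = -(\tilde{h}_{ij})(g^{ij})$ recorded in (\ref{dxi5}). Substituting the latter into the former, the two minus signs cancel and one reads off $S^\gamma = (\tilde{h}_{ij})(g^{ij})$. Thus the only thing that genuinely requires an argument is the representation (\ref{dxi5}) itself, and the plan is to establish that and then quote Definition \ref{shape}.

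To obtain (\ref{dxi5}) I would expand each coordinate derivative $\tilde{\xi}_i$ of the Cahn-Hoffman field in the tangent frame $\{X_1,\dots,X_n\}$ of $X$. The crucial input is Proposition \ref{PCH}(i): since $\tilde{\xi} = \xi\circ\nu$, the chain rule gives $\tilde{\xi}_i = (d\xi)_\nu(\nu_i)$, and $(d\xi)_\nu$ has image orthogonal to $\nu$, so each $\tilde{\xi}_i$ is perpendicular to $\nu$. As $\nu$ is the common unit normal of $X$ and of $\xi_\gamma$, this places $\tilde{\xi}_i$ in the tangent space of $X$, so one may legitimately write $\tilde{\xi}_i = a_i^j X_j$ for unknown coefficients $a_i^j$. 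I would then determine these by pairing with $X_l$: using the defining relation $-\tilde{h}_{il} := \langle \tilde{\xi}_i, X_l\rangle$ from (\ref{dxi2}) gives $-\tilde{h}_{il} = a_i^j g_{jl}$, and inverting the first fundamental form $(g_{ij})$ yields $a_i^j = -\tilde{h}_{il}g^{lj}$. Hence $\tilde{\xi}_i = -\tilde{h}_{il}g^{lj}X_j$, which is precisely the matrix identity (\ref{dxi5}).

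The only real obstacle is the orthogonality $\langle \tilde{\xi}_i, \nu\rangle = 0$, without which the expansion $\tilde{\xi}_i = a_i^j X_j$ would be illegitimate and $d\tilde{\xi}$ would fail even to restrict to an endomorphism of $T_p M_0$; this is supplied exactly by the front property in Proposition \ref{PCH}(i). Everything else is routine: the invertibility of $(g_{ij})$ is automatic at regular points of $X$, and one should note that the lemma does not require $(\tilde{h}_{ij})$ to be symmetric, since only the matrix product $(\tilde{h}_{ij})(g^{ij})$ is asserted. Indeed, the possible non-symmetry of $(\tilde{h}_{ij})$ is the reason $S^\gamma$ need not be self-adjoint, which is consistent with the fact (Lemma \ref{HL}) that the anisotropic principal curvatures need not be real in general.
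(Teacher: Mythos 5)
Your proposal is correct and follows essentially the same route as the paper: both use Proposition \ref{PCH}(i) (via the chain rule $\tilde{\xi}_i=(d\xi)_\nu(\nu_i)$) to place $\tilde{\xi}_i$ in the tangent space, expand $\tilde{\xi}_i=a_i^jX_j$, pair with $X_l$ against (\ref{dxi2}) to solve $a_i^j=-\tilde{h}_{il}g^{lj}$, obtain (\ref{dxi5}), and conclude from $S^\gamma=-d\tilde{\xi}$. Your closing remark on the non-symmetry of $(\tilde{h}_{ij})$ and its consistency with Lemma \ref{HL} is a sound observation, though not needed for the proof.
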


\section{First variation formula, anisotropic mean curvature, and anisotropic Gauss map}\label{1st}

In this section we assume that $\gamma:S^n \to {\mathbb R}_{>0}$ is of $C^2$. 

Let $M_0$ be an oriented compact connected $n$-dimensional $C^\infty$ manifold with smooth boundary $\partial M_0$. 
We assume that a map $X:M_0\to {\mathbb R}^{n+1}$ satisfies (A1), (A2), and (A3) in \S \ref{pre} with $r=2$, $X_i=X$, $M_i=M_0$, and $\nu_i=\nu$. 

Recall that the Cahn-Hoffman field along $X$ (for $\gamma$) is defined as
\begin{equation}\label{CHFX2}
\tilde{\xi}:=\tilde{\xi}_\gamma:=\xi_\gamma\circ \nu:M_0 \to {\mathbb R}^{n+1},
\end{equation}
where
$
\xi_\gamma(\nu) =D\gamma+\gamma(\nu)\nu
$, ($\nu \in S^n$), 
is the Cahn-Hoffman map for $\gamma$. 

\begin{remark}\label{AMC}{\rm
The anisotropic mean curvature $\Lambda$ of $X$ at any regular point of $X$ can be represented as
$$\Lambda =\frac{1}{n}(-{\rm div}_{M_0} D\gamma+nH\gamma),
$$
where $H$ is the mean curvature of $X$ (cf. \cite{R}, \cite{KP2005}). 
}\end{remark}

First we give the first variation formula of the anisotropic surface energy for immersions.

\begin{lemma}\label{firstFF}
Assume that $X:M_0\to {\mathbb R}^{n+1}$ is an immersion 
and let
$X_\epsilon:M_0\to {\mathbb R}^{n+1}$ be a variation of $X=X_0$. 
We assume for simplicity that $X_\epsilon$ is of $C^\infty$  in $\epsilon$. Set
$$
\delta X := \frac{\partial X_\epsilon}{\partial \epsilon}\Big|_{\epsilon=0}, \quad \psi:=\big\langle \delta X, \nu\big\rangle.
$$ 
Then the first variation of the anisotropic energy $\mathcal F_\gamma$ is given as follows. 
\begin{eqnarray} \delta {\mathcal F_\gamma} &:=&
\frac{d{\mathcal F}_\gamma(X_\epsilon)}{d\epsilon}\Big|_{\epsilon=0} \nonumber\\
&=&- \int_{M_0} n\Lambda\psi\;dA
-\oint_{\partial {M_0}} 
\langle \tilde{\xi}, -\langle \delta X, \nu\rangle N + \langle \delta X, N \rangle\nu\rangle \;d{\tilde s}, \label{firstF1}\end{eqnarray}
where 
$dA$ 
is the $n$-dimensional volume form of $M_0$ induced by $X$, $N$ is the outward-pointing unit conormal along $\partial M_0$, and  
$d{\tilde s}$ is the $(n-1)$-dimensional volume form of $\partial M_0$ induced by $X$. 
Denote by $R$ the $\pi/2$-rotation on the $(N, \nu)$-plane. Denote by $p$ the projection from ${\mathbb R}^{n+1}$ to the $(N, \nu)$-plane. 
Then 
\begin{equation}\label{firstF2}
\delta {\mathcal F_\gamma}
=
- \int_{M_0} n\Lambda\psi\;dA
-\oint_{\partial {M_0}} \langle \delta X, R(p(\tilde{\xi}))\rangle\;d{\tilde s}.
\end{equation}
\end{lemma}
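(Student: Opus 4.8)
The plan is to compute $\delta\mathcal F_\gamma$ directly by differentiating $\mathcal F_\gamma(X_\epsilon)=\int_{M_0}\gamma(\nu_\epsilon)\,dA_\epsilon$ under the integral sign, producing two contributions: one from the variation of the induced volume form $dA_\epsilon$ and one from the variation of $\gamma(\nu_\epsilon)$ through the moving unit normal $\nu_\epsilon$. For the first, I would use the standard first-variation formula for the area element, $\tfrac{\partial}{\partial\epsilon}(dA_\epsilon)|_{\epsilon=0}=(\operatorname{div}_{M_0}(\delta X)^{\top}-nH\psi)\,dA$, where $(\delta X)^{\top}$ is the tangential part of $\delta X$ and $\psi=\langle\delta X,\nu\rangle$. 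For the second, I would differentiate $\gamma(\nu_\epsilon)$ using $\tfrac{\partial}{\partial\epsilon}\gamma(\nu_\epsilon)|_{\epsilon=0}=\langle D\gamma,\dot\nu\rangle$, where $D\gamma$ is the gradient of $\gamma$ on $S^n$ (as in Definition \ref{CHM}), together with the well-known formula $\dot\nu=-\nabla_{M_0}\psi-(\text{tangential drift terms})$ for the variation of the unit normal. The key bookkeeping is to express everything in terms of $\delta X$, $\psi$, and the Cahn-Hoffman field $\tilde\xi=D\gamma+\gamma\nu$, since $\tilde\xi$ is precisely the object that packages the gradient term and the $\gamma\nu$ term together.

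The main step is then an integration by parts that isolates the bulk term $-n\Lambda\psi$ and collects all boundary contributions. I would combine the two interior contributions and apply the divergence theorem on $M_0$. Using the identity $\operatorname{div}_{M_0}\tilde\xi$ and Remark \ref{AMC}, namely $n\Lambda=-\operatorname{div}_{M_0}D\gamma+nH\gamma$, the tangential-divergence and normal-curvature terms should recombine so that the interior integrand becomes exactly $-n\Lambda\psi$, which yields the first term of \rf{firstF1}. This is where the definition $\Lambda=\tfrac1n(-\operatorname{div}_{M_0}D\gamma+nH\gamma)$ is used decisively, and where one must be careful that the cross terms from differentiating both $dA_\epsilon$ and $\gamma(\nu_\epsilon)$ cancel against the divergence of the tangential part of $\tilde\xi$.

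The boundary term produced by the divergence theorem has the form $-\oint_{\partial M_0}\langle\tilde\xi,\text{(something built from }\delta X\text{)}\rangle\,d\tilde s$. The remaining task is to identify that ``something'' with $-\langle\delta X,\nu\rangle N+\langle\delta X,N\rangle\nu$, which is exactly the component of the variation relevant to the conormal–normal plane; this gives \rf{firstF1}. For the reformulation \rf{firstF2}, I would observe that the vector $-\psi N+\langle\delta X,N\rangle\nu$ is the image of the projection $p(\delta X)$ onto the $(N,\nu)$-plane under the $\pi/2$-rotation $R$, so that $\langle\tilde\xi,-\psi N+\langle\delta X,N\rangle\nu\rangle=\langle\delta X,R(p(\tilde\xi))\rangle$ after using that $R$ is an isometry of the plane (hence $R^{-1}=R^{\top}=-R$) and that the pairing only sees the $(N,\nu)$-components; this is a short linear-algebra identity on the two-dimensional plane spanned by $N$ and $\nu$.

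I expect the main obstacle to be the careful treatment of the variation of the unit normal $\dot\nu$ and the subsequent integration by parts: $\dot\nu$ contains both a gradient-of-$\psi$ term and tangential terms coming from $(\delta X)^{\top}$, and one must verify that, after pairing with $D\gamma$ and integrating by parts, the tangential pieces assemble correctly with the area-variation divergence term to produce the clean anisotropic divergence $\operatorname{div}_{M_0}D\gamma$ appearing in $\Lambda$. Keeping the tangential and normal decompositions consistent, and ensuring all boundary contributions are grouped into the single conormal expression rather than scattered across several terms, is the delicate part; the final rotation identity \rf{firstF2} is then essentially a verification.
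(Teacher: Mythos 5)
Your plan follows essentially the same route as the paper's own proof: decompose $\delta X=\eta+\psi\nu$, use the standard first-variation formulas for $dA_\epsilon$ and for $\nu_\epsilon$ (the paper writes $\dot\nu=-\nabla\psi+d\nu(\eta)$), integrate by parts so that Remark \ref{AMC} turns the interior integrand into $-n\Lambda\psi$, and then finish with two-dimensional linear algebra on the $(N,\nu)$-plane. The interior bookkeeping you outline is exactly the paper's: the two cross terms recombine as $\langle D\gamma,d\nu(\eta)\rangle+\gamma\,{\rm div}\,\eta={\rm div}_{M_0}(\gamma\eta)$ (note: the divergence of $\gamma\eta$, not of ``the tangential part of $\tilde\xi$'' as you phrase it) and are swept entirely into the boundary, while $\int_{M_0}\bigl(\langle D\gamma,-\nabla\psi\rangle-nH\gamma\psi\bigr)dA$ integrates by parts to $-\int_{M_0}n\Lambda\psi\,dA$ plus a boundary term.

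There is, however, a concrete problem in your boundary bookkeeping: two of your asserted identities are each false by a sign, and your final answer is right only because the two errors cancel. First, the divergence theorem produces the boundary contribution with a \emph{plus} sign,
\begin{equation*}
+\oint_{\partial M_0}\bigl\langle \tilde\xi,\,-\psi N+\langle\delta X,N\rangle\nu\bigr\rangle\,d\tilde s,
\end{equation*}
not the minus sign you claim. Second, since $R$ is skew on the $(N,\nu)$-plane (you note $R^{\top}=-R$ but do not carry the minus through), one has $\langle\tilde\xi,R(p(\delta X))\rangle=-\langle\delta X,R(p(\tilde\xi))\rangle$, not $+\langle\delta X,R(p(\tilde\xi))\rangle$. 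Composing the two correct statements gives the boundary term $-\oint_{\partial M_0}\langle\delta X,R(p(\tilde\xi))\rangle\,d\tilde s$, i.e.\ \rf{firstF2}, which is what your compensating errors also produce. You should be aware that the lemma as printed is itself internally inconsistent: the integrand in \rf{firstF1} equals $-\langle\delta X,R(p(\tilde\xi))\rangle$ pointwise, so the boundary integrals in \rf{firstF1} and \rf{firstF2} have opposite signs. The test case $\gamma\equiv1$, $\tilde\xi=\nu$, where the first variation of area is $-\int_{M_0}nH\psi\,dA+\oint_{\partial M_0}\langle\delta X,N\rangle\,d\tilde s$, confirms that \rf{firstF2} is the correct formula (it is also the one used in Propositions \ref{FV} and \ref{EL}) and that the minus sign in front of the boundary integral of \rf{firstF1} is a typo; the paper's own computation in fact derives the plus-sign version of \rf{firstF1} and then \rf{firstF2}. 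So: right approach, right final formula, but the two sign errors must be repaired rather than left to cancel.
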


\noindent {\it Proof.} \ 
We can write
$$
X_{\epsilon} =X+\epsilon(\eta+\psi\nu)+{\mathcal O}(\epsilon^2),
$$
where $\eta$ is the tangential component and $\psi\nu$ is the normal component of the variation vector field $\delta X$ of $X_\epsilon$. 
Then the first variation of the anisotropic energy $\mathcal F_\gamma$ is computed as follows. 
\begin{eqnarray} \delta {\mathcal F_\gamma} &:=&
\frac{d{\mathcal F}_\gamma(X_\epsilon)}{d\epsilon}\Big|_{\epsilon=0} \nonumber\\
&=&\int_{M_0} \langle D\gamma, -\nabla\psi+d\nu(\eta)\rangle +\gamma(-nH\psi+{\rm div}\eta)\; dA \label{imp1}\\
&=&\int_{M_0} \langle D\gamma, -\nabla\psi\rangle -nH\gamma\psi\; dA
+
\int_{M_0} \langle D\gamma, d\nu(\eta)\rangle +\gamma{\rm div}\eta\; dA \label{imp2}\\
&=& \int_{M_0} \psi({\rm div}_{M_0} D\gamma-nH\gamma )\;dA
+\oint_{\partial {M_0}} -\psi\langle D\gamma,N\rangle +\gamma\langle \eta,N \rangle \;d{\tilde s}. \label{firstF}
\end{eqnarray}

We compute the integrand of the boundary integral in (\ref{firstF}):
\begin{eqnarray}
f&:=& -\psi\langle D\gamma,N\rangle +\gamma\langle \eta,N \rangle \label{ft01}\\
&=& -\langle \delta X, \nu\rangle \langle \tilde{\xi}, N\rangle +\langle \tilde{\xi}, \nu\rangle \langle \delta X, N \rangle \label{ft02}\\
&=&
\langle \tilde{\xi}, -\langle \delta X, \nu\rangle N + \langle \delta X, N \rangle\nu\rangle. \label{ft03}
\end{eqnarray} 
(\ref{ft03}) with (\ref{firstF}) gives (\ref{firstF1}). 

From (\ref{ft02}), we obtain also the following:
\begin{equation}
f= \langle
\delta X, 
-\langle \tilde{\xi}, N\rangle \nu + \langle \tilde{\xi}, \nu\rangle N
\rangle.
\end{equation}

By using 
\begin{equation}
R(N)=\nu, \quad R(\nu)=-N,
\end{equation}
we compute
\begin{eqnarray}
f&=&
\langle
\delta X, 
-\langle \tilde{\xi}, N\rangle \nu + \langle \tilde{\xi}, \nu\rangle N
\rangle\\
&=&
\langle
\delta X, 
-\langle \tilde{\xi}, N\rangle R(N) - \langle \tilde{\xi}, \nu\rangle R(\nu)
\rangle\\
&=&
\langle
\delta X, 
-R(\langle \tilde{\xi}, N\rangle N + \langle \tilde{\xi}, \nu\rangle \nu)
\rangle\\
&=&
\langle
\delta X, 
-R(p(\tilde{\xi}))
\rangle. \label{int2}
\end{eqnarray} 
(\ref{firstF}) with (\ref{int2}) gives
\begin{eqnarray}\label{fst}
\delta {\mathcal F_\gamma}
&=&
\int_{M_0} \psi({\rm div}_{M_0} D\gamma-nH\gamma )\;dA
-\oint_{\partial {M_0}} \langle \delta X, R(p(\tilde\xi))\rangle\;d{\tilde s}\\
&=&
- \int_{M_0} n\Lambda\psi\;dA
-\oint_{\partial {M_0}} \langle \delta X, R(p(\tilde\xi))\rangle\;d{\tilde s} \label{ft},
\end{eqnarray}
which gives (\ref{firstF2}).
\hfill $\Box$

\vskip0.5truecm

Next we give the first variation formula of the anisotropic surface energy for piecewise smooth weak immersions, which is a generalization of Lemma \ref{firstFF}.

\begin{proposition}\label{FV} 
We assume that the map $X:M_0\to {\mathbb R}^{n+1}$ satisfies (A1), (A2), and (A3) in \S \ref{pre} with $r=2$, $X_i=X$, $M_i=M_0$, and $\nu_i=\nu$. 
Let 
$X_\epsilon:M_0\to {\mathbb R}^{n+1}$, ($\epsilon \in J:=[-\epsilon_0, \epsilon_0]$),  be a variation of $X$, that is, $\epsilon_0>0$ and $X_0=X$. 
We assume for simplicity that $X_\epsilon$ is of $C^\infty$  in $\epsilon$. Then, each $X_\epsilon$ satisfies (A2) and (A3) in \S \ref{pre} with $r=2$, $X_i=X_\epsilon$, $M_i=M_0$, and $\nu_i=\nu_\epsilon$. 
We also assume that, for each $\epsilon \in J$, the anisotropic mean curvature $\Lambda_\epsilon$ of $X_\epsilon$ is bounded on $M_0^o$. Set
$$
\delta X := \frac{\partial X_\epsilon}{\partial \epsilon}\Big|_{\epsilon=0}, \quad \psi:=\big\langle \delta X, \nu\big\rangle.
$$ 
Then the first variation of the anisotropic energy $\mathcal F_\gamma$ is given as follows. 
\begin{equation} \label{firstF22}
\delta {\mathcal F_\gamma} =
\frac{d{\mathcal F}_\gamma(X_\epsilon)}{d\epsilon}\Big|_{\epsilon=0}
=
- \int_{M_0} n\Lambda\psi\;dA
-\oint_{\partial {M_0}} \langle \delta X, R(p(\tilde{\xi}))\rangle\;d{\tilde s},
\end{equation}
where 
$dA$, $d{\tilde s}$, $R$, $p$ are the same as those in Lemma \ref{firstFF}.
\end{proposition}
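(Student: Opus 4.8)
The plan is to deduce (\ref{firstF22}) from the immersion case (Lemma \ref{firstFF}) by an exhaustion argument in which $M_0$ is cut along hypersurfaces parallel to $\partial M_0$ that are then pushed onto the boundary. Fix a collar of $\partial M_0$ with a smooth boundary-distance function $t:M_0\to[0,\infty)$, $t^{-1}(0)=\partial M_0$, and for small $\delta>0$ set $M_\delta:=\{t\ge\delta\}$, a compact $n$-manifold with smooth boundary $C_\delta:=\{t=\delta\}$. Every point of $M_\delta$ lies in $M_0^o$, so condition (A2) guarantees that $X|_{M_\delta}$ is a genuine $C^2$-immersion; hence Lemma \ref{firstFF} applies verbatim to the restricted variation $X_\epsilon|_{M_\delta}$ and gives, for each $\delta$,
\[
\frac{d}{d\epsilon}{\mathcal F}_\gamma(X_\epsilon|_{M_\delta})\Big|_{\epsilon=0}
=- \int_{M_\delta} n\Lambda\psi\;dA
-\oint_{C_\delta} \langle \delta X, R(p(\tilde{\xi}))\rangle\;d{\tilde s}.
\]
It then remains to let $\delta\to0$ on both sides.

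The right-hand side converges by dominated convergence. For the bulk term, $M_\delta\nearrow M_0^o$ while the induced area $dA(M_0)$ is finite (since $\sqrt{\det(g_{ij})}$ is bounded on the compact $M_0$) and $|n\Lambda\psi|$ is bounded on $M_0^o$, using the hypothesis that $\Lambda$ is bounded together with the continuity of $\psi=\langle\delta X,\nu\rangle$; thus $\int_{M_\delta} n\Lambda\psi\,dA\to\int_{M_0} n\Lambda\psi\,dA$. For the boundary term, identify each $C_\delta$ with $\partial M_0$ by the flow of $t$ and write $d\tilde s=J_\delta\,d\mu$ for a fixed smooth reference measure $d\mu$ on $\partial M_0$. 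As $X$ is of $C^2$ up to $\partial M_0$, the coefficients $g_{ij}=\langle X_i,X_j\rangle$ are bounded, so the Jacobians $J_\delta$ are uniformly bounded; and, away from $S(X)$, the quantities $N$, $\nu$, $\tilde\xi$, $\delta X$ and $J_\delta$ all depend continuously on the point, so the integrand converges pointwise $d\mu$-a.e.\ to its value on $\partial M_0$. The decisive point that makes the integrand uniformly bounded, and hence legitimizes dominated convergence right up to the singular set, is that $\tilde\xi=\xi_\gamma\circ\nu$ is defined and bounded on all of $M_0$: by (A3) the Gauss map $\nu$ extends to a $C^1$ map $M_0\to S^n$, and $\xi_\gamma$ is continuous on the compact $S^n$, so $|R(p(\tilde\xi))|\le|\tilde\xi|$ stays bounded even where $X$ is not an immersion. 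Therefore $\oint_{C_\delta}\langle\delta X,R(p(\tilde\xi))\rangle\,d\tilde s\to\oint_{\partial M_0}\langle\delta X,R(p(\tilde\xi))\rangle\,d\tilde s$, the latter being finite because the induced $(n-1)$-volume of $\partial M_0$ is finite by Lemma \ref{measure}.

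To move the $\epsilon$-derivative through the limit I would run these estimates uniformly in $\epsilon\in J$. Since $X_\epsilon$ is of $C^\infty$ in $\epsilon$ over the compact interval $J$, and the hypotheses bound $\Lambda_\epsilon$ while the $C^2$-bounds on $X_\epsilon$ and the boundedness of $\tilde\xi_\epsilon$ bound all the integrands, the functions $\epsilon\mapsto\frac{d}{d\epsilon}{\mathcal F}_\gamma(X_\epsilon|_{M_\delta})$ converge uniformly on $J$ to the candidate right-hand side, while ${\mathcal F}_\gamma(X_\epsilon|_{M_\delta})\to{\mathcal F}_\gamma(X_\epsilon)$ pointwise. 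The standard theorem on uniform convergence of derivatives then identifies $\frac{d}{d\epsilon}{\mathcal F}_\gamma(X_\epsilon)|_{\epsilon=0}$ with the limit, which is exactly (\ref{firstF22}).

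I expect the main obstacle to be the boundary term near $S(X)\subseteq\partial M_0$, where $dX$ drops rank: there the induced conormal $N$ and the area element $d\tilde s$ may degenerate, so one cannot bound a $C_\delta$-integral simply by the measure of a shrinking set, and excising a neighborhood of $S(X)$ leaves an interior cut whose induced volume need not vanish. Pushing the whole boundary inward instead converts the difficulty into a clean dominated-convergence statement, with domination resting precisely on the boundedness of the anisotropic Gauss map $\tilde\xi$ furnished by (A3). The residual care is to check that the set where $N$ (equivalently the direction of $X_*\partial_t$) has no limit is $d\mu$-null, or else is absorbed by the vanishing of $J_\delta$ there, so that the pointwise-a.e.\ convergence used above is genuine.
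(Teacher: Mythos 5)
Your proposal is correct and follows essentially the same route as the paper's own proof in \S\ref{PFV}: exhaust $M_0$ by compact submanifolds of its interior, apply the immersion formula (Lemma \ref{firstFF}) on each piece, and interchange the $\epsilon$-derivative with the exhaustion limit via a uniform-convergence (Moore--Osgood type) argument resting on the boundedness of $\Lambda_\epsilon$, the finiteness of the induced measures, and the boundedness of $\tilde\xi$ supplied by (A3). The differences are only bookkeeping ones---your concrete collar exhaustion versus the paper's abstract exhaustion $\{M_t\}$, and the textbook theorem on uniform convergence of derivatives in place of the paper's explicit mean-value-theorem estimates on the difference quotients $f(\epsilon,t)$---and your level of detail on the residual technical points (uniformity in $\epsilon$, possible degeneracy of $N$ and $d\tilde s$ on the singular set meeting $\partial M_0$) matches that of the paper's Claims in \S\ref{PFV}.
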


The proof of Proposition \ref{FV} will be given in \S \ref{PFV}.

It is known that the first variation formula of the $(n+1)$-dimensional volume
enclosed by $X_\epsilon$ is given as follows (cf. \cite{BDE1988}).
\begin{equation}\label{firstv}
\delta V=\int_{M_0} \psi\;dA.
\end{equation}

Now let $X:M=\cup_{i=1}^k M_i \to{\mathbb R}^{n+1}$ be a piecewise-$C^2$ weak immersion (for definition, see \S \ref{pre}) with unit normal $\nu_i:M_i\to S^n$. 
We will use the following notations. 
$$
X_i:=X|_{M_i},
$$
$$
\tilde{\xi}_i:=\xi\circ\nu_i:M_i \to {\mathbb R}^{n+1}.
$$
Denote by $N_i$ the outward-pointing unit conormal along $\partial M_i$. 
Denote by $R_i$ the $\pi/2$-rotation on the $(N_i, \nu_i)$-plane. Denote by $p_i$ the projection from ${\mathbb R}^{n+1}$ to the $(N_i, \nu_i)$-plane. 

(\ref{firstF2}) together with (\ref{firstF22}), (\ref{firstv}) gives the Euler-Lagrange equations in the following Proposition \ref{EL}. 

\begin{proposition}[Euler-Lagrange equations. For $n=2$, see B. Palmer \cite{P2012}]\label{EL} 
A piecewise-$C^2$ weak immersion $\displaystyle X:M=\cup_{i=1}^k M_i \rightarrow{\mathbb R}^{n+1}$ 
is a critical point of the anisotropic energy $\displaystyle {\mathcal F}_\gamma(X)=\int_M \gamma(\nu) \:dA$ for $(n+1)$-dimensional volume-preserving variations if and only if 

(i) \ The anisotropic mean curvature $\Lambda$ of $X$ is constant on $M\setminus S(X)$, and 

(ii) $\tilde{\xi}_i(\zeta)-\tilde{\xi}_j(\zeta) \in T_\zeta M_i\cap T_\zeta M_j =T_\zeta(\partial M_i\cap \partial M_j)$ holds at any  $\zeta \in \partial M_i\cap \partial M_j$, here a tangent space of a submanifold of ${\mathbb R}^{n+1}$ is naturally identified with a linear subspace of ${\mathbb R}^{n+1}$. 
\end{proposition}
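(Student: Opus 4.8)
The plan is to obtain both conditions at once from the first variation formula (\ref{firstF22}), applied piece by piece, combined with the volume formula (\ref{firstv}). First I would recast criticality by a Lagrange-multiplier argument: since $\delta V$ is a nonzero linear functional on variation fields, the requirement that $\delta\mathcal F_\gamma=0$ on $\ker(\delta V)$ is equivalent to the existence of a single constant $c$ with $\delta\mathcal F_\gamma=c\,\delta V$ for \emph{all} variations. Applying Proposition \ref{FV} to each $X_i=X|_{M_i}$ and summing, and using that $\delta X$ is continuous across the shared edges while $d\tilde s$ is the common induced $(n-1)$-volume form, this reads
\[
-\int_M n\Lambda\psi\,dA-\sum_i\oint_{\partial M_i}\langle\delta X,\,R_i(p_i(\tilde{\xi}_i))\rangle\,d\tilde s=c\int_M\psi\,dA
\]
for every variation.

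Next I would separate the two conditions. Choosing variations whose support avoids all edges kills the boundary terms and leaves $\int_M(n\Lambda+c)\psi\,dA=0$ for arbitrary interior $\psi$; since $M$ is connected this forces $\Lambda\equiv -c/n$, a single constant, giving condition (i). Substituting $\Lambda=-c/n$ back, the interior integral cancels the right-hand side exactly, so the remaining requirement is $\sum_i\oint_{\partial M_i}\langle\delta X,R_i(p_i(\tilde{\xi}_i))\rangle\,d\tilde s=0$ for all variations. Grouping the boundary integrals by edges, each edge $E=\partial M_i\cap\partial M_j$ contributes $\int_E\langle\delta X,\,R_i(p_i(\tilde{\xi}_i))+R_j(p_j(\tilde{\xi}_j))\rangle\,d\tilde s$, and since $\delta X|_E$ may be prescribed arbitrarily by an admissible (continuous, piecewise-$C^2$) variation, I conclude
\[
R_i(p_i(\tilde{\xi}_i))+R_j(p_j(\tilde{\xi}_j))=0\qquad\text{along }E.
\]

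It remains to translate this vector identity into condition (ii), which is where the geometry enters. Because $M_i$ and $M_j$ share the edge, the two $2$-planes spanned by $(N_i,\nu_i)$ and by $(N_j,\nu_j)$ both equal the single normal plane $P:=(T_\zeta E)^\perp$, so $p_i=p_j=:p$ is the orthogonal projection onto $P$. The decisive point is that the consistent orientation of the closed hypersurface $M$ forces the frames $(N_i,\nu_i)$ and $(N_j,\nu_j)$ to induce \emph{opposite} orientations of $P$: the edge $E$ inherits opposite boundary orientations from $M_i$ and $M_j$ (interior boundaries cancel in the decomposition of $M$), and combining this with the normal convention of \S\ref{pre} that $(\nu,\text{positive basis of }TM)$ orients $\mathbb{R}^{n+1}$ positively shows that the change of basis from $(N_i,\nu_i)$ to $(N_j,\nu_j)$ in $P$ has negative determinant. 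Hence the $\pi/2$-rotations satisfy $R_j=-R_i$ on $P$, and the displayed identity becomes $R_i\bigl(p(\tilde{\xi}_i)-p(\tilde{\xi}_j)\bigr)=0$, i.e. $p(\tilde{\xi}_i)=p(\tilde{\xi}_j)$, i.e. $\tilde{\xi}_i-\tilde{\xi}_j\in\ker p=T_\zeta E=T_\zeta(\partial M_i\cap\partial M_j)$, which is condition (ii). For the converse direction, (i) makes the interior term $-n\Lambda\,\delta V$ vanish on volume-preserving variations, and (ii) together with $R_j=-R_i$ makes each edge integrand vanish, so $X$ is critical.

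I expect the main obstacle to be the orientation bookkeeping establishing $R_j=-R_i$: this is precisely what converts the analytically natural but opaque boundary integrand $R(p(\tilde{\xi}))$ into the clean geometric matching condition (ii), and it must be executed with the boundary-orientation and normal conventions fixed carefully. A secondary point needing justification is that every smooth vector field along an edge is realizable as $\delta X|_E$ for some admissible variation preserving the piecewise structure, which is what legitimizes passing from the vanishing of the edge integral to the pointwise vanishing of its integrand.
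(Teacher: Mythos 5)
Your proposal is correct and follows essentially the same route as the paper's proof: the first variation formula (Proposition \ref{FV}) applied piece by piece together with the volume formula (\ref{firstv}) yields constancy of $\Lambda$ by localization, and the edge terms yield the condition $R_i(p_i(\tilde{\xi}_i))+R_j(p_j(\tilde{\xi}_j))=0$, which is then translated into condition (ii) using that the $(N_i,\nu_i)$- and $(N_j,\nu_j)$-planes coincide along the edge. Your explicit orientation bookkeeping establishing $R_j=-R_i$ supplies a detail the paper leaves implicit when it asserts that this edge condition is equivalent to $p_i(\tilde{\xi}_i)=p_j(\tilde{\xi}_j)$.
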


\noindent {\it Proof.} \ 
The first term of the right-hand side of (\ref{firstF2}) together with (\ref{firstv}) clearly gives (i). 
By the second term of the right-hand side of (\ref{firstF22}), 
the condition on $\partial M_i\cap \partial M_j$ is
\begin{equation}\label{bdryc}
R_i(p_i(\tilde{\xi}_i))+R_j(p_j(\tilde{\xi}_j))= 0.
\end{equation}
Since, at any point $\zeta \in\partial M_i\cap \partial M_j$, 
$(N_i, \nu_i)$-plane coincides with $(N_j, \nu_j)$-plane, 
(\ref{bdryc}) is equivalent to 
\begin{equation}\label{proj}
p_i(\tilde{\xi}_i)-p_j(\tilde{\xi}_j)= 0.
\end{equation}
Again since, at any point $\zeta \in\partial M_i\cap \partial M_j$, 
$(N_i, \nu_i)$-plane coincides with $(N_j, \nu_j)$-plane, (\ref{proj}) implies (ii). 
\hfill $\Box$

\begin{cor}[Euler-Lagrange equations for the isotropic case]\label{ELI} 
A piecewise-$C^2$ weak immersion $\displaystyle X:M=\cup_{i=1}^k M_i \rightarrow{\mathbb R}^{n+1}$ is a critical point of the volume (that is the $n$-dimensional area) $\displaystyle {\mathcal A}(X)=\int_M \:dA$ for $(n+1)$-dimensional volume-preserving variations if and only if 

(i) \ The mean curvature $H$ of $X$ is constant on $M$, and 

(ii) $\displaystyle X:M\rightarrow{\mathbb R}^{n+1}$ is a $C^\omega$ immersion. 
\end{cor}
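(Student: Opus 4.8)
The plan is to read off the corollary from Proposition \ref{EL} in the special case $\gamma\equiv 1$, and then to promote the regularity it provides by a standard elliptic argument. When $\gamma\equiv 1$ we have $D\gamma\equiv 0$, so the Cahn-Hoffman map $\xi_\gamma$ is the identity of $S^n$ and the Cahn-Hoffman field reduces to the Gauss map, $\tilde{\xi}_i=\xi_\gamma\circ\nu_i=\nu_i$; moreover $\Lambda=(1/n)(-{\rm div}_M D\gamma+nH\gamma)=H$ (Remark \ref{AMC}). Thus condition (i) of Proposition \ref{EL} is exactly the constancy of $H$ on $M\setminus S(X)$, which is condition (i) here once we know $S(X)=\emptyset$.

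The main step is to show that condition (ii) of Proposition \ref{EL} forces the unit normals of adjacent faces to agree along every edge. With $\tilde{\xi}_i=\nu_i$, that condition reads $\nu_i(\zeta)-\nu_j(\zeta)\in T_\zeta(\partial M_i\cap\partial M_j)$ for each $\zeta\in\partial M_i\cap\partial M_j$. On the other hand the edge lies in both faces, so $T_\zeta(\partial M_i\cap\partial M_j)$ is contained in the tangent space of each face; since $\nu_i$ and $\nu_j$ are the corresponding unit normals, both are orthogonal to $T_\zeta(\partial M_i\cap\partial M_j)$, and hence so is $\nu_i(\zeta)-\nu_j(\zeta)$. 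A vector that both lies in a linear subspace and is orthogonal to it must vanish, so $\nu_i(\zeta)=\nu_j(\zeta)$ along the whole edge. Because the positions already match by continuity (A1), this means the two faces fit together with coinciding oriented tangent hyperplanes: $X$ is a $C^1$ immersion in a neighborhood of each edge point, with no genuine crease.

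Next I would bootstrap from $C^1$ to $C^\omega$. Near an edge point, write $X$ locally as a graph $x_{n+1}=u(x_1,\dots,x_n)$ over the common tangent hyperplane. Then $u$ is $C^1$, is $C^2$ and solves the constant-mean-curvature equation ${\rm div}\bigl(\nabla u/\sqrt{1+\nr{\nabla u}^2}\bigr)=nH$ classically on each side of the image of the edge, and the agreement of normals makes the conormal flux $\nabla u/\sqrt{1+\nr{\nabla u}^2}$ continuous across the edge. That continuity is precisely the transmission condition that makes $u$ a weak solution across the edge of a quasilinear uniformly elliptic equation with no singular term supported on the edge; interior elliptic regularity then gives $u\in C^\infty$, and, the nonlinearity being real-analytic, analytic elliptic regularity (Morrey) gives $u\in C^\omega$. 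Hence $X$ is a real-analytic immersion, $S(X)=\emptyset$, and (i) holds on all of $M$; this proves the forward implication together with (ii).

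For the converse I would argue directly: if $X$ is a $C^\omega$ immersion with $H$ constant, then it has no genuine edges, so condition (ii) of Proposition \ref{EL} holds trivially (adjacent faces meet $C^1$, hence $\nu_i=\nu_j$ there), while constancy of $\Lambda=H$ is condition (i); Proposition \ref{EL} then certifies that $X$ is a critical point for volume-preserving variations. The principal obstacle is the regularity step: one must check carefully that the pointwise agreement of normals on the edge genuinely yields a weak (transmission) solution, with no distributional contribution concentrated on the edge, so that the removable-singularity and elliptic-regularity machinery applies and carries $X$ all the way up to $C^\omega$.
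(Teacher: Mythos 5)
Your proposal follows exactly the paper's own route: specialize Proposition \ref{EL} to $\gamma\equiv 1$ (so $\tilde{\xi}_i=\nu_i$ and $\Lambda=H$), deduce $\nu_i(\zeta)=\nu_j(\zeta)$ along each edge because the difference both lies in $T_\zeta(\partial M_i\cap\partial M_j)$ and is orthogonal to it, and then invoke ellipticity of the equation $H=\textup{constant}$ to upgrade the now globally continuous normal to a $C^\omega$ immersion. The only difference is that you spell out the elliptic step (graph over the common tangent hyperplane, continuity of the conormal flux as a transmission condition, interior regularity plus Morrey analyticity), which the paper compresses into a single sentence; this is a faithful expansion of the same argument, not a different proof.
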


\noindent {\it Proof.} \ 
(i) is an immediate consequence of Proposition \ref{EL} (i). 

We prove (ii). Proposition \ref{EL} (ii) implies that 
\begin{equation}\label{bdc}
\nu_i(\zeta)-\nu_j(\zeta) \in T_\zeta M_i\cap T_\zeta M_j =T_\zeta(\partial M_i\cap \partial M_j)
\end{equation} 
holds at any  $\zeta \in \partial M_i\cap \partial M_j$. 
However, both of $\nu_i(\zeta)$ and $\nu_j(\zeta)$ are orthogonal to $T_\zeta(\partial M_i\cap \partial M_j)$, which implies that $\nu_i(\zeta)-\nu_j(\zeta)=0$. 
Therefore, $\nu$ is continuous on $M$. Because of the ellipticity of the equation $H=$ constant, $\displaystyle X:M\rightarrow{\mathbb R}^{n+1}$ is a $C^\omega$ immersion. 
\hfill $\Box$

\begin{definition}\label{defR}{\rm
A piecewise-$C^2$ weak immersion $\displaystyle X:M=\cup_{i=1}^k M_i \rightarrow{\mathbb R}^{n+1}$ is called a hypersurface with constant anisotropic mean curvature (CAMC for short) if $X$ satisfies the conditions (i) and (ii) in Proposition \ref{EL}. 
}
\end{definition}

If $\gamma$ is convex, then we have the following striking result that the Cahn-Hoffman field along any piecewise-$C^2$ CAMC hypersurface is defined continuously over the whole hypersurface as follows.  

\begin{theorem}\label{CHP}
Assume $\gamma:S^n\to {\mathbb R}_{>0}$ is of $C^2$ and the convex integrand of its Wulff shape $W_\gamma$. 
Let $X:M=\cup_{i=1}^k M_i\to {\mathbb R}^{n+1}$ be a closed piecewise-$C^2$ CAMC hypersurface with unit normal $\nu_i:M_i \to S^n$, ($i=1, \cdots k$).  
Then, the collection of the Cahn-Hoffman fields $\tilde{\xi}_i:=\xi\circ \nu_i :M_i \to {\mathbb R}^{n+1}$, ($i=1, \cdots k$) defines a $C^0$ map $\tilde{\xi}:M\to {\mathbb R}^{n+1}$. 
\end{theorem}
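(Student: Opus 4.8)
The plan is to reduce the whole statement to a pointwise matching condition on the edges and then to exploit strict convexity of the Wulff shape. Since $\xi=\xi_\gamma$ is a $C^1$ map by Proposition \ref{PCH} and each $\nu_i:M_i\to S^n$ is $C^1$ by (A3), each field $\tilde{\xi}_i=\xi\circ\nu_i$ is continuous (indeed $C^1$) on the closed piece $M_i$. By the pasting lemma for finitely many closed sets, the collection $\{\tilde{\xi}_i\}$ glues to a continuous map $\tilde{\xi}:M\to{\mathbb R}^{n+1}$ as soon as we show $\tilde{\xi}_i(\zeta)=\tilde{\xi}_j(\zeta)$ at every $\zeta\in M_i\cap M_j=\partial M_i\cap\partial M_j$. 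Thus the entire theorem comes down to proving this matching across edges.

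To establish the matching I would begin from the Euler--Lagrange edge condition of Proposition \ref{EL}(ii), which for a CAMC hypersurface reads
$$
\tilde{\xi}_i(\zeta)-\tilde{\xi}_j(\zeta)\in T_\zeta(\partial M_i\cap\partial M_j)
$$
at each $\zeta\in\partial M_i\cap\partial M_j$. The crucial observation is that $\nu_i(\zeta)$ is orthogonal to $T_\zeta(\partial M_i\cap\partial M_j)$, since this edge tangent space is contained in $T_\zeta M_i$ and $\nu_i$ is the unit normal to $X_i$. Pairing the displayed relation with $\nu_i(\zeta)$ therefore gives $\langle\tilde{\xi}_i(\zeta),\nu_i(\zeta)\rangle=\langle\tilde{\xi}_j(\zeta),\nu_i(\zeta)\rangle$. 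Because $D\gamma\perp\nu$, the left side equals $\langle\xi(\nu_i(\zeta)),\nu_i(\zeta)\rangle=\gamma(\nu_i(\zeta))$, so I obtain $\langle\tilde{\xi}_j(\zeta),\nu_i(\zeta)\rangle=\gamma(\nu_i(\zeta))$.

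Convexity enters in the final step. Since $\gamma$ is convex, Theorem \ref{key2}(ii) gives $W_\gamma=\hat{W}_\gamma=\xi(S^n)$, so both $\tilde{\xi}_i(\zeta)=\xi(\nu_i(\zeta))$ and $\tilde{\xi}_j(\zeta)=\xi(\nu_j(\zeta))$ lie on $W_\gamma$, and $\gamma$ is the support function of $W_\gamma$. The two identities just derived say that both points realize the maximum $\max_{p\in W_\gamma}\langle p,\nu_i(\zeta)\rangle=\gamma(\nu_i(\zeta))$; equivalently, both lie on the supporting hyperplane of $W_\gamma$ with outward normal $\nu_i(\zeta)$. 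Because $\gamma$ is of class $C^2$, hence $C^1$, the Han--Nishimura characterization (Remark \ref{MNM}(ii)) shows $W_\gamma$ is strictly convex, so by Definition \ref{supp}(i) this supporting hyperplane meets $W_\gamma$ in a single point. Consequently $\tilde{\xi}_i(\zeta)=\tilde{\xi}_j(\zeta)$, which is precisely the matching condition.

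The main obstacle is this last step: the edge condition only controls the component of $\tilde{\xi}_i-\tilde{\xi}_j$ normal to the edge (forcing both points onto a common supporting hyperplane), and it is not a priori clear that the remaining tangential freedom vanishes. The key is that strict convexity --- available exactly because $\gamma\in C^1$ --- removes this freedom by guaranteeing a unique contact point for each supporting hyperplane. I would also verify carefully that strict convexity in the sense of Definition \ref{supp}(i) (no segment lies on $W_\gamma$) does yield uniqueness of the support-realizing point: any two distinct such points would span a segment contained in both the supporting hyperplane and $W_\gamma$, contradicting the absence of segments on the boundary.
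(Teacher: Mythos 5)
Your proof is correct and follows essentially the same route as the paper's: both reduce the theorem to the edge-matching condition from Proposition \ref{EL}(ii), exploit that $\tilde{\xi}_i-\tilde{\xi}_j$ is orthogonal to the unit normal(s), and then invoke strict convexity of $W_\gamma$ (via Theorem \ref{key2} and the Han--Nishimura result) to force $\tilde{\xi}_i(\zeta)=\tilde{\xi}_j(\zeta)$. The only cosmetic difference is that you phrase the final contradiction through the support function and a single supporting hyperplane with normal $\nu_i$, whereas the paper pairs with both normals and argues via the tangent hyperplanes at the two points; these are the same mechanism.
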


\noindent {\it Proof}. \ 
Since $\gamma$ is convex and of $C^2$, $W_\gamma=\hat{W}_\gamma$ and it is strictly convex (\cite{HN2017}), that is, for any two different points $q_1, q_2 \in W_\gamma$, the straight line containing $q_1, q_2$ intersects $W_\gamma$ only at $q_1, q_2$.

We consider the boundary condition in Proposition \ref{EL} (ii), which is 
\begin{equation}\label{bdry}
\tilde{\xi}_i(q)-\tilde{\xi}_j(q) \in T_q(\partial M_i\cap \partial M_j), \quad \forall q \in \partial M_i\cap \partial M_j.
\end{equation} 
Take an arbitrary point $q\in \partial\Sigma_i\cap \partial\Sigma_j$. 
Assume that $\tilde{\xi}_i(q)\ne \tilde{\xi}_j(q)$. 
Then, from (\ref{bdry}), we have
$$
\langle \tilde{\xi}_i(q)-\tilde{\xi}_j(q), \nu_i(q)\rangle=0
=
\langle \tilde{\xi}_i(q)-\tilde{\xi}_j(q), \nu_j(q)\rangle.
$$
Hence,  
\begin{equation}\label{bdry2}
(\tilde{\xi}_i(q)-\tilde{\xi}_j(q)) \in T_{\tilde{\xi}_i(q)}W, \quad (\tilde{\xi}_i(q)-\tilde{\xi}_j(q))  \in T_{\tilde{\xi}_j(q)}W
\end{equation}
holds. 
However, (\ref{bdry2}) contradicts the fact that $W$ is strictly convex. 
Hence $\tilde{\xi}_i(q)= \tilde{\xi}_j(q)$ holds, and we have proved the desired result. 
\hfill$\Box$


\begin{remark}\label{AGM}{\rm 
It is shown that, at any regular point of $X$, it holds that 
\begin{equation}\label{trace}
\Lambda=-\frac{1}{n}{\rm trace}_M (D^2\gamma+\gamma\cdot 1)\circ d\nu
= -\frac{1}{n}{\rm trace}_M\;d(\xi_\gamma\circ\nu)
= -\frac{1}{n}{\rm trace}_M\;d(\tilde\xi_\gamma)
\end{equation}
(cf. \cite{KP2005}). 
Hence, at points where $\gamma$ is strictly convex, by (\ref{trace}),  the equation ``$\Lambda = \textup{constant}$'' is elliptic. 
}\end{remark}

\begin{remark}\label{homoge3}{\rm 
Here we give a known representation formula for the anisotropic mean curvature for a graph for readers' convenience. We denoted the homogeneous extension of $\gamma : S^n \to {\mathbb R}_{>0}$ by $\overline\gamma : {\mathbb R}^{n+1} \to {\mathbb R}_{\ge0}$. 
Let $X$ be a graph 
$$
X : D \to {\mathbb R}^3, \quad X(u_1, u_2)=(u_1, u_2, f(u_1, u_2))
$$
of a $C^{\infty}$ function $f:D (\subset {\mathbb R}^2) \to {\mathbb R}$. 
Set
$$
f_i:=f_{u_i}, \quad f_{ij}:=f_{u_iu_j}, \quad Df:=(f_1, f_2).
$$
Then,
$$
\Lambda=\frac{1}{2}\sum_{i, j=1, 2} \overline\gamma_{x_ix_j}\Big|_{(-Df, 1)}f_{u_iu_j}
$$
holds. In the special case where 
$\overline\gamma(Y)\equiv |Y|$, that is, $\gamma\equiv 1$, 
$$
\Lambda=\frac{(1+(f_2)^2) f_{11}-2f_1f_2f_{12}+(1+(f_1)^2)f_{22}}{2((f_1)^2+(f_2)^2+1)^{3/2}}
$$
holds. Hence, the equation ``$\Lambda \equiv {\rm constant}$'' is elliptic or hyperbolic depends on the $2 \times 2$ matrix $(\overline\gamma_{x_ix_j}\Big|_{(-Df, 1)})_{i, j=1, 2}$.
%
}\end{remark}

\begin{proposition}\label{CHR}
The anisotropic mean curvature of the Cahn-Hoffman map $\xi:S^n \to {\mathbb R}^{n+1}$ is $-1$ at any regular point $\nu \in S^n$ with respect to the unit normal $\nu$.  
Hence, particularly the anisotropic mean curvature of the Wulff shape (for the outward-pointing unit normal) is $-1$ at any regular point. 
\end{proposition}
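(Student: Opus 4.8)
The plan is to apply the framework of \S\ref{a-curv} to the hypersurface $X := \xi_\gamma$ itself, taking as domain the set of regular points of $S^n$ and working locally via the pointwise formulas there. The decisive observation is Corollary \ref{CCH2}: at every regular point $\nu \in S^n$, the vector $\nu$ is a unit normal to $\xi_\gamma$. Hence the Gauss map of the hypersurface $\xi_\gamma$ is the identity map of $S^n$, and therefore its Cahn-Hoffman field (anisotropic Gauss map) is
\[
\tilde{\xi} = \xi_\gamma \circ \nu = \xi_\gamma \circ \id_{S^n} = \xi_\gamma = X;
\]
that is, the anisotropic Gauss map of the Cahn-Hoffman map coincides with the Cahn-Hoffman map itself.

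From here I would compute the anisotropic shape operator directly through Lemma \ref{shapeL}, which gives $S^\gamma = (\tilde{h}_{ij})(g^{ij})$. Since $\tilde{\xi} = X$, in any local coordinates we have $\tilde{\xi}_i = X_i$, so the defining relation (\ref{dxi2}) yields
\[
-\tilde{h}_{ij} = \langle \tilde{\xi}_i, X_j\rangle = \langle X_i, X_j\rangle = g_{ij},
\]
i.e.\ $\tilde{h}_{ij} = -g_{ij}$. Substituting into Lemma \ref{shapeL} gives $S^\gamma = (-g_{ij})(g^{ij}) = -I_n$. Consequently every anisotropic principal curvature of $\xi_\gamma$ equals $-1$ (in agreement with Remark \ref{rcurv}), and
\[
\Lambda = H^\gamma_1 = \frac{1}{n}\,{\rm trace}\,S^\gamma = \frac{-n}{n} = -1
\]
at each regular point, with respect to the normal $\nu$. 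For the Wulff shape I would then invoke Theorem \ref{key2}: since $W_\gamma \subset \xi_\gamma(S^n)$ with $\gamma$ its support function (and $W_\gamma = \xi_\gamma(S^n)$ when $\gamma$ is convex), every regular point of $W_\gamma$ is $\xi_\gamma(\nu)$ for some $\nu$ realized as the outward-pointing unit normal there (Theorem \ref{key2}(iii)); so the same computation applies verbatim and gives $\Lambda = -1$ for the outward-pointing normal.

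The computation itself is routine, so the only place demanding care is the bookkeeping of signs and orientations. The magnitude $1$ is forced by $S^\gamma = -I_n$, but the sign of $\Lambda$ depends on which of $\pm\nu$ is used to build $S^\gamma$; one must check that the $\nu$ supplied by Corollary \ref{CCH2} is the one compatible with the orientation convention (A3), and, for the Wulff shape, that it is the outward-pointing normal (Theorem \ref{key2}(iii)). This orientation check is what pins the answer to $-1$ rather than $+1$.
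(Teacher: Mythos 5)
Your proof is correct and takes essentially the same route as the paper: both hinge on Corollary \ref{CCH2} to conclude that the anisotropic Gauss map of $\xi_\gamma$ is $\xi_\gamma$ itself, so that the anisotropic shape operator is $-I_n$ and $\Lambda=-1$. The paper phrases the computation via the trace formula (\ref{trace}) applied to $d(\xi\circ\xi^{-1})=I_n$, while you go through Lemma \ref{shapeL} with $\tilde h_{ij}=-g_{ij}$ (which additionally recovers Remark \ref{rcurv}); the substance is identical.
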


\noindent {\it Proof}. \ 
Since $\xi^{-1}$ gives the unit normal vector field $\nu_{\xi}$ for the Cahn-Hoffman map $\xi:S^n \to {\mathbb R}^{n+1}$ (Corollary \ref{CCH2}), using (\ref{trace}), we have
\begin{equation}\label{tr}
\Lambda
=-\frac{1}{n}{\rm trace} (d(\xi\circ\xi^{-1}))
=
-\frac{1}{n}{\rm trace} (I_n)
=-1, 
\end{equation}
where $I_n$ is the $n\times n$ identity matrix. 
\hfill $\Box$

\begin{proposition}\label{cahn}
Set $A:=D^2\gamma+\gamma\cdot 1$. Take the unit normal vector $\tilde{\nu}$ of the 
Cahn-Hoffman map $\xi=\xi_\gamma$ so that $\tilde{\nu}=\nu$ holds at any point $\nu \in S^n$ where $\det A>0$ holds. 
Let $\{e_1, \cdots, e_n\}$ be an orthonormal basis of $T_\nu S^n$ such that $\{\tilde{\nu}, f_1, \cdots, f_n\}$ is compatible with the canonical orientation of ${\mathbb R}^{n+1}$, where $f_j:=(d_\nu\xi)(e_j)$. 
Then, at each point $\nu \in S^n$ satisfying $\det A<0$, $\tilde{\nu}=-\nu$ holds. 
If $\Lambda$ is the anisotropic mean curvature of $\xi$ with respect to $\tilde{\nu}$, then
\begin{equation}
\Lambda= \left \{
\begin{array}{l}
-1, \ (\det A>0),\\
1, \ (\det A<0).
\end{array}
\right.
\end{equation} 
\end{proposition}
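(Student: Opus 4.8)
The plan is to treat the two assertions in turn: first identify the orientation-compatible normal $\tilde{\nu}$, and then compute the anisotropic mean curvature relative to it.

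For the normal, I would begin with Corollary \ref{CCH2}: at every point $\nu \in S^n$ where $\xi$ is an immersion, that is where $\det A \neq 0$, the vector $\nu$ is already a unit normal to $\xi$. Hence $\tilde{\nu} = \pm \nu$ and only the sign is in question. Fix a positively oriented orthonormal basis $\{e_1, \dots, e_n\}$ of $T_\nu S^n$, chosen so that $\det[\nu, e_1, \dots, e_n] = 1$ in ${\mathbb R}^{n+1}$ (the canonical orientation of (A3)). By Proposition \ref{PCH}(i) each $f_j = (d_\nu \xi)(e_j) = A e_j$ lies in $\nu^\perp$, so $A$ acts as a symmetric endomorphism of $T_\nu S^n$, and expanding $f_j = \sum_k A_{kj} e_k$ yields $\det[\nu, f_1, \dots, f_n] = \det(A_{kj}) \det[\nu, e_1, \dots, e_n] = \det A$. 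Therefore the frame $\{\nu, f_1, \dots, f_n\}$ is positively oriented precisely when $\det A > 0$, and the requirement that $\{\tilde{\nu}, f_1, \dots, f_n\}$ be compatible with the canonical orientation pins down $\tilde{\nu} = \nu$ for $\det A > 0$ and $\tilde{\nu} = -\nu$ for $\det A < 0$. This is the first claim.

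For the curvature I would start from Proposition \ref{CHR} (equivalently Remark \ref{rcurv}): with respect to the co-orientation $\nu$ the anisotropic shape operator of $\xi$ is $S^\gamma = -I_n$, so every anisotropic principal curvature, and hence $\Lambda$, equals $-1$. The remaining point is that the anisotropic mean curvature is odd under reversal of the unit normal. Writing $S^\gamma = A \circ S$ with $S = -d\nu$ the Euclidean shape operator and $A = d\xi$ the Cahn-Hoffman datum attached to the co-orientation of the front, replacing the normal by its opposite sends $S \mapsto -S$ while keeping $A$ fixed, so $S^\gamma \mapsto -S^\gamma$ and $\Lambda \mapsto -\Lambda$. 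Combining this with the first part, the anisotropic mean curvature measured against $\tilde{\nu}$ is $\mathrm{sgn}(\det A) \cdot (-1)$, that is $-1$ when $\det A > 0$ and $+1$ when $\det A < 0$, which is the assertion.

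The routine half is the orientation determinant; the delicate half is the sign bookkeeping in the last step. The subtlety is that the Cahn-Hoffman reference $A = d\xi$ is anchored to the co-orientation of the front rather than to the chosen outward normal $\tilde{\nu}$, so that flipping $\tilde{\nu}$ should reverse only the Euclidean shape operator and hence the overall sign of $\Lambda$; one must resist the naive substitution $\xi \circ (-\nu)$, which would instead introduce antipodal data $A(-\nu)$ and obscure the clean value $\pm 1$. Making precise that reversing $\tilde{\nu}$ multiplies $\Lambda$ by $-1$, and nothing more, is the heart of the argument.
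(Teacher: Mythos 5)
The paper offers no proof of Proposition \ref{cahn} at all (the statement is followed directly by Corollary \ref{cor:cahn}), so your proposal cannot be measured against an official argument; it has to stand on its own. Its first half does stand: by Corollary \ref{CCH2} the normal is $\pm\nu$, and since Proposition \ref{PCH} (i) puts $f_j=Ae_j$ in $\nu^{\perp}$, the identity $\det[\nu,f_1,\cdots,f_n]=\det A\cdot\det[\nu,e_1,\cdots,e_n]$ forces $\tilde{\nu}=\mathrm{sgn}(\det A)\,\nu$ under the orientation convention of (A3). This is certainly the computation the author intends.

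The second half contains a genuine gap, and it sits exactly where you located it. You assert that reversing the normal sends $S\mapsto -S$ ``while keeping $A$ fixed,'' i.e.\ that $\Lambda$ is odd under reversal of $\tilde{\nu}$; but you stipulate this instead of proving it, and it cannot be proved from the paper's definitions, because for those definitions it is false. Definitions \ref{CHF}, \ref{shape}, \ref{curv} (and Remark \ref{AMC}) evaluate all anisotropic data at the chosen Gauss map of the hypersurface: with normal $\tilde{\nu}=-\nu$ the Cahn-Hoffman field is $\xi_\gamma\circ(-\nu)$ --- precisely the substitution you call naive --- and then $S^{\gamma}=A(\nu)^{-1}A(-\nu)$, whose trace equals $n$ only when $A(-\nu)=A(\nu)$, e.g.\ for even $\gamma$. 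For the non-even integrand of Example \ref{ex4'}, where $\gamma(\theta)=\cos 3\theta+2$ and $d\xi_\gamma$ has the scalar representative $a(\theta)=2-8\cos 3\theta$, the point $\theta=0$ has $a(0)=-6<0$, hence $\tilde{\nu}=-\nu$ there, and the literal definition yields $\Lambda=a(\pi)/a(0)=-5/3$, not $+1$. So the sign-flip rule you rely on is not a lemma but a (re)definition of what ``anisotropic mean curvature with respect to $\tilde{\nu}$'' means for the front $\xi_\gamma$, namely $\Lambda_{\tilde{\nu}}:=\langle\tilde{\nu},\nu\rangle\,\Lambda_{\nu}$ with the anisotropic datum $A$ anchored at the co-orientation $\nu$. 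That reading is evidently what the author intends --- it is the only one under which both the Proposition and the remark following Corollary \ref{cor:cahn} are true --- but a complete proof must either state this convention explicitly as a definition, or add the hypothesis that $\gamma$ is even, under which your oddness claim becomes a theorem and the rest of your argument closes.
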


Proposition \ref{cahn} gives

\begin{cor}\label{cor:cahn}
$\xi_\gamma$ is not CAMC in general.
\end{cor}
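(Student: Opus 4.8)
The plan is to read off from Definition \ref{defR} that a weakly immersed hypersurface is CAMC only if its anisotropic mean curvature $\Lambda$ is constant on the regular set (this is condition (i) of Proposition \ref{EL}). Hence, to prove that $\xi_\gamma$ fails to be CAMC, it suffices to exhibit a single $C^2$ integrand $\gamma$ for which $\Lambda$, computed along $\xi_\gamma$, takes two distinct values; there is no need to examine the edge condition (ii) at all. Proposition \ref{cahn} already supplies exactly the computation I need: writing $A := D^2\gamma + \gamma \cdot 1$, the anisotropic mean curvature of $\xi_\gamma$ equals $-1$ wherever $\det A > 0$ and $+1$ wherever $\det A < 0$. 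So the statement reduces to producing a $\gamma$ for which $\det A$ is strictly positive on some nonempty open set and strictly negative on another.

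First I would observe that any such $\gamma$ must be non-convex: by Theorem \ref{key2}(ii) (equivalently Corollary \ref{WCH4}), $\gamma$ is a convex integrand precisely when $A$ is positive-semidefinite everywhere, so any excursion of $\det A$ into negative values forces $\gamma$ outside the convex class. The natural place to look is therefore among the non-convex examples of \S \ref{exs}.

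Concretely, I would invoke Example \ref{ex4'}, where $n = 1$ and $\gamma(\nu) = 4\nu_1^3 - 3\nu_1 + 2$. There $A = d\xi_\gamma = a(\theta)(-\sin\theta, \cos\theta)$ with $a(\theta) = 2(-16\cos^3\theta + 12\cos\theta + 1)$, and since $n = 1$ the sign of $\det A$ is simply the sign of $a(\theta)$. As $a$ has six simple zeros on $S^1$, it alternates sign across the six open arcs between them; on the arcs where $a > 0$ Proposition \ref{cahn} gives $\Lambda = -1$, and on the arcs where $a < 0$ it gives $\Lambda = +1$. Thus $\Lambda$ is not constant on the regular set of $\xi_\gamma$, condition (i) of Proposition \ref{EL} is violated, and $\xi_\gamma$ is not CAMC. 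Because $\gamma$ ranges over all $C^2$ integrands, this one counterexample establishes the ``in general'' assertion.

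The argument is essentially bookkeeping once Proposition \ref{cahn} is available, so I do not anticipate a genuine obstacle. The only point needing care is that $\Lambda$ be compared at \emph{regular} points of $\xi_\gamma$ (those with $\det A \ne 0$) and that the two sign regions be genuinely open rather than isolated; both hold in Example \ref{ex4'} because the zeros of $a$ are isolated and the sign is locally constant away from them, as the self-intersecting curve of Figure \ref{fig:ex4'} makes visible.
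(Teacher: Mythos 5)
Your proposal is correct and follows essentially the same route as the paper: the paper deduces the corollary directly from Proposition \ref{cahn}, and its subsequent remark invokes exactly Example \ref{ex4'} (where $a(\theta)$ changes sign, so $\Lambda$ takes both values $-1$ and $+1$ on the regular set) as the witnessing counterexample. Your additional observations — that non-convexity of $\gamma$ is forced by Theorem \ref{key2}(ii), and that the six zeros of $a$ are isolated so both sign regions are open — are accurate refinements of the same argument.
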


\begin{remark}{\rm
Example \ref{ex4'} gives a good example for Corollary \ref{cor:cahn}. The closed curve with self-intersection in Figure \ref{fig:ex4'} is the image $\xi_\gamma(S^1)$ of $\xi_\gamma$. At each regular point on each smooth arc containing a solid arc, $\Lambda=-1$, while at other regular points $\Lambda=1$. 
Hence, $\xi_\gamma(S^1)$ is not CAMC.
}
\end{remark}


\begin{proposition}\label{AMS}
There is no closed piecewise-smooth weakly immersed hypersurface in ${\mathbb R}^{n+1}$ that is a critical point of ${\cal F}_\gamma$. 
This means that there is no closed piecewise-smooth weakly immersed hypersurface in ${\mathbb R}^{n+1}$ whose anisotropic mean curvature is constant zero. 
\end{proposition}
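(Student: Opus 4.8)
The plan is to exploit the scale invariance of the problem: the energy $\mathcal{F}_\gamma$ is positively homogeneous of degree $n$, and in the unconstrained problem no compensating constraint is present, so a purely dilational variation already detects that no critical point can exist. I would make this the primary argument because it needs no integration by parts and no hypothesis on the edges.

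First I would record the homogeneity and a positivity fact. For any $\lambda>0$ the dilation $\lambda X$ is again a piecewise-$C^2$ weak immersion of $M$; its unit normal field coincides with that of $X$ (a homothety preserves directions), while its induced $n$-dimensional volume form is $\lambda^n$ times that of $X$. Since $\gamma$ depends only on $\nu$, this yields $\mathcal{F}_\gamma(\lambda X)=\lambda^n\,\mathcal{F}_\gamma(X)$. Moreover, because $\gamma>0$ on $S^n$ and $X$ is an immersion on each $M_i^o$ (a set of positive measure), we have $\mathcal{F}_\gamma(X)=\int_M\gamma(\nu)\,dA>0$.

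Next I would apply this to the variation $X_\epsilon:=(1+\epsilon)X$, which is a legitimate $C^\infty$-in-$\epsilon$ variation of $X=X_0$ within the class of piecewise-$C^2$ weak immersions (it preserves the decomposition $M=\cup_i M_i$ and each of the regularity conditions (A1)--(A3)). If $X$ were a critical point of $\mathcal{F}_\gamma$ for unconstrained variations, then in particular $\frac{d}{d\epsilon}\mathcal{F}_\gamma(X_\epsilon)\big|_{\epsilon=0}=0$; but the homogeneity gives $\frac{d}{d\epsilon}\mathcal{F}_\gamma((1+\epsilon)X)\big|_{\epsilon=0}=n\,\mathcal{F}_\gamma(X)>0$, a contradiction. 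This rules out closed critical points. For the reformulation in terms of curvature, I would note that testing the first variation formula \rf{firstF22} against variations compactly supported in a single $M_i^o$ (where the boundary integral drops out) forces $\Lambda\equiv0$ on $M\setminus S(X)$ at any critical point; hence the nonexistence of critical points is precisely the nonexistence of closed CAMC hypersurfaces with $\Lambda\equiv0$.

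An alternative route, which addresses the vanishing-$\Lambda$ statement head-on, is to feed the dilational field $\delta X=X$ into \rf{firstF22}: for a closed hypersurface the outer boundary is empty and, once the edge contributions from adjacent pieces are accounted for, one obtains the anisotropic Minkowski-type identity $\int_M(\gamma(\nu)+\Lambda\langle X,\nu\rangle)\,dA=0$ (cf. Theorem \ref{th-min}); setting $\Lambda\equiv0$ collapses this to $\mathcal{F}_\gamma(X)=0$, again contradicting $\mathcal{F}_\gamma(X)>0$. The one delicate point here is the bookkeeping of the edge integrals in the piecewise-smooth setting: to have the conormal contributions of neighbouring faces cancel one must invoke the edge conditions of Proposition \ref{EL}(ii). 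I expect this edge accounting to be the only genuine obstacle, and it is exactly what the coordinate-free homogeneity identity of the previous paragraph avoids, since that computation uses no integration by parts and imposes no condition along $\partial M_i\cap\partial M_j$. For this reason I would present the homogeneity argument as the main proof and keep the Minkowski identity as a remark.
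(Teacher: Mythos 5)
Your main argument --- the dilational variation $X_\epsilon=(1+\epsilon)X$, the homogeneity $\mathcal{F}_\gamma(\lambda X)=\lambda^n\mathcal{F}_\gamma(X)$, and the positivity $\mathcal{F}_\gamma(X)>0$ yielding $\frac{d}{d\epsilon}\mathcal{F}_\gamma(X_\epsilon)\big|_{\epsilon=0}=n\,\mathcal{F}_\gamma(X)>0$ --- is exactly the paper's proof of Proposition~\ref{AMS}, so your proposal is correct and takes essentially the same approach. The additional material (the explicit positivity justification, the reduction of criticality to $\Lambda\equiv 0$, and the alternative route via the Minkowski-type identity of Theorem~\ref{th-min}) is sound but goes beyond what the paper's one-line scaling argument requires.
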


\begin{proof}
Let $X:M^n \to {\mathbb R}^{n+1}$ be a piecewise-smooth weakly immersed hypersurface in ${\mathbb R}^{n+1}$. Consider the homotheties $X_\epsilon:=(1+\epsilon)X$ of $X$. Then,  
since ${\cal F}_\gamma(X_\epsilon)=(1+\epsilon)^n{\cal F}_\gamma(X)>0$, 
$$
\frac{d}{\;d\epsilon\;}\Big|_{\epsilon=0}{\cal F}_\gamma(X_\epsilon)=n{\cal F}_\gamma(X)>0
$$
holds. Hence $X$ is not a critical point of ${\cal F}_\gamma(X)$. 
\end{proof}

\section{Anisotropic parallel hypersurface and Steiner-type formula}\label{tube}

We assume that $\gamma:S^n \to {\mathbb R}_{>0}$ is of $C^2$, and use the same notations as in \S \ref{a-curv}. 
`{\it Anisotropic parallel hypersurface}' is a generalization of parallel hypersurface which is defined as follows. 

\begin{definition}[Anisotropic parallel hypersurface, cf. \cite{R}]\label{para}{\rm 
Let $X$ be a piecewise-$C^2$ weak immersion. 
For any real number $t$, we call the map $X_t:=X+t\tilde{\xi}:M \to {\mathbb R}^{n+1}$ the anisotropic parallel deformation of $X$ of height $t$. If $X_t$ is a piecewise-$C^2$ weak immersion, then we call it the anisotropic parallel hypersurface of $X$ of height $t$. 
}\end{definition}

The anisotropic energy ${\mathcal F}_\gamma(X_t)$ of the anisotropic parallel hypersurface $X_t:=X+t\tilde{\xi}$ is a polynomial of $t$ of degree at most $n$ as follows. 

\begin{theorem}[Steiner-type formula]\label{th-st}
Assume that $\gamma:S^n \to {\mathbb R}_{>0}$ is of $C^2$. 
Assume that $\displaystyle X:M=\cup_{i=1}^k M_i \rightarrow{\mathbb R}^{n+1}$ is a piecewise-$C^2$ weak immersion. 
Consider anisotropic parallel hypersurfaces $X_t=X+t\tilde{\xi}:M\setminus S(X) \to {\mathbb R}^{n+1}$, where $S(X)$ is the set of singular points of $X$. 
Then the following integral formula holds.
\begin{equation}\label{eq-st0}
{\mathcal F}_\gamma(X_t)
=
\int_M \gamma(\nu)\;dA_t
=
\int_M \gamma(\nu) (1-tk^\gamma_1)\cdots(1-t k^\gamma_n)\;dA,
\end{equation}
where $\int_M$ means $\sum_{i=1}^k \int_{M_i}$. 
Moreover, if the $r$-th anisotropic mean
curvature of $X$ for $\gamma$ is integrable on $M$ for $r=1, \cdots, n$, then 
\begin{equation}\label{eq-st}
{\mathcal F}_\gamma(X_t)
=
\sum_{r=0}^n (-1)^r t^r({}_nC_r)\int_M \gamma(\nu) H^\gamma_r\;dA 
\end{equation}
holds
\end{theorem}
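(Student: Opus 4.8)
The plan is to establish the Steiner-type formula by directly computing the induced volume form $dA_t$ of the anisotropic parallel hypersurface $X_t = X + t\tilde\xi$ and then integrating against $\gamma(\nu)$. The key observation is that the anisotropic parallel deformation moves each point along the Cahn-Hoffman field $\tilde\xi = \xi_\gamma \circ \nu$, and crucially, because of Proposition \ref{PCH}(i), the variation $\tilde\xi_i = (d\xi)_\nu(\nu_{u_i})$ is tangent to $X$ (perpendicular to $\nu$). This means $\nu$ remains the unit normal of $X_t$ wherever $X_t$ is an immersion, so $\gamma(\nu)$ is unchanged along the deformation and only the volume form $dA_t$ varies with $t$.

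First I would work on each smooth piece $M_i$ in local coordinates $(u_1, \ldots, u_n)$, writing the tangent vectors of $X_t$ as $(X_t)_{u_i} = X_{u_i} + t\,\tilde\xi_{u_i}$. Using the relation $\tilde\xi_i = -\tilde h_{il} g^{lj} X_j$ from \eqref{dxi3} (equivalently $d\tilde\xi = -(\tilde h_{ij})(g^{ij})$, and recalling $S^\gamma = (\tilde h_{ij})(g^{ij})$ from Lemma \ref{shapeL}), the tangent map of $X_t$ factors as $(\mathrm{id} - tS^\gamma)$ applied to the tangent frame of $X$. Consequently the Jacobian relating $dA_t$ to $dA$ is $\det(\mathrm{id} - tS^\gamma)$, and this determinant expands via the anisotropic principal curvatures as
$$
\det(I_n - tS^\gamma) = (1 - tk^\gamma_1)\cdots(1 - tk^\gamma_n).
$$
Care is needed because the anisotropic shape operator $S^\gamma$ need not be symmetric and its eigenvalues $k^\gamma_i$ may be complex (Lemma \ref{HL}); however the determinant is a real polynomial in $t$ by the same reasoning as in \eqref{exp3}, so $dA_t = \det(I_n - tS^\gamma)\,dA$ is well-defined. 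Multiplying by $\gamma(\nu)$ and summing over the pieces gives \eqref{eq-st0}.

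To obtain \eqref{eq-st}, I would expand the product $\prod_{i=1}^n(1 - tk^\gamma_i)$ using the elementary symmetric functions, exactly as in the proof of Lemma \ref{HL}(ii):
$$
\prod_{i=1}^n(1 - tk^\gamma_i) = \sum_{r=0}^n (-t)^r \sigma^\gamma_r = \sum_{r=0}^n (-1)^r t^r ({}_nC_r) H^\gamma_r,
$$
using $\sigma^\gamma_r = ({}_nC_r) H^\gamma_r$. Substituting into \eqref{eq-st0} and interchanging the finite sum with the integral yields \eqref{eq-st}. The integrability hypothesis on each $H^\gamma_r$ is precisely what justifies this interchange and guarantees each coefficient $\int_M \gamma(\nu) H^\gamma_r\,dA$ is finite, so that ${\mathcal F}_\gamma(X_t)$ is genuinely a polynomial in $t$ of degree at most $n$.

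The main obstacle I anticipate is handling the singular set $S(X)$ and the non-symmetry of $S^\gamma$ rigorously. Since $X_t$ is defined on $M \setminus S(X)$ and the principal curvatures $k^\gamma_i$ may blow up near singular points (compare Theorem \ref{key1}(ii) and Corollary \ref{cor1}), I must argue that the integral over $M$ is really the sum of integrals over the regular parts and that the measure-zero singular set (Lemma \ref{measure}) contributes nothing; the integrability assumption on the $H^\gamma_r$ is what controls this. The determinant computation itself is routine linear algebra once the tangent map is identified as $\mathrm{id} - tS^\gamma$, but verifying that $\gamma(\nu)\,dA_t = \gamma(\nu)\det(I_n - tS^\gamma)\,dA$ holds pointwise — using that $\nu$ is genuinely the Gauss map of $X_t$ at regular points, which rests on Proposition \ref{PCH}(i) — is the conceptual heart of the argument and deserves careful justification.
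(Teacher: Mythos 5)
Your proposal is correct and is essentially the paper's own argument: the paper likewise computes the pullback metric of $X_t$ using (\ref{dxi3}) and Lemma \ref{shapeL}, obtains $dA_t=\det(I_n-tS^\gamma)\,dA=(1-tk^\gamma_1)\cdots(1-tk^\gamma_n)\,dA$ (Lemma \ref{shape2}), relies on $\nu$ remaining normal along the deformation (Proposition \ref{PCH}), and expands the determinant via (\ref{exp3}) into the $H^\gamma_r$, with the integrability hypothesis justifying the finite term-by-term integration. The only cosmetic difference is that you factor the frame transformation as $G_t=(I_n-tS^\gamma)\,G\;{}^t\!(I_n-tS^\gamma)$, making the squared determinant evident, whereas the paper expands $g_{tij}$ and then separately proves $\det(I_n-t\tilde{B}G^{-1})=\det(I_n-t\;{}^t\!\tilde{B}G^{-1})$ (Lemma \ref{shape2} (i)); the content is identical.
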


Theorem \ref{th-st} is a direct consequence of Lemma \ref{shape2} (iii) below.

The isotropic version of Theorem \ref{th-st} is known as the Weyl's tube formula (\cite{W1939}). 
The isotropic $2$-dimensional version is the well-known Steiner's formula:

\begin{cor}[Steiner's formula]\label{cst}
Assume that $\displaystyle X:M \rightarrow{\mathbb R}^{3}$ is an immersion from a $2$-dimensional oriented connected compact $C^\infty$ manifold $M$ into ${\mathbb R}^{3}$ with unit normal $\nu:M \to S^2$. Denote by $H$, $K$ the mean and the Gaussian curvature of $X$, respectively. 
Then the area ${\mathcal A}(X_t)$ of the parallel surface $X_t=X+t\nu$ of height $t$ has the following representation. 
\begin{equation}
{\mathcal A}(X_t)
={\mathcal A}(X)-2t\int_MH \:dA+t^2\int_MK \;dA.
\end{equation}
\end{cor}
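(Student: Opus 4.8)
The plan is to derive Steiner's formula as the specialization of the anisotropic Steiner-type formula (Theorem~\ref{th-st}) to the case $\gamma\equiv 1$, where the anisotropic structure collapses to the familiar Euclidean one. In this isotropic setting the Cahn-Hoffman map is the identity, so the Cahn-Hoffman field along $X$ becomes $\tilde\xi = \xi_\gamma\circ\nu = \nu$ itself, and the anisotropic parallel hypersurface $X_t = X + t\tilde\xi$ reduces to the ordinary parallel surface $X_t = X + t\nu$. First I would record that when $\gamma\equiv 1$ the matrix $A = D^2\gamma + \gamma\cdot 1$ equals the identity, hence the anisotropic shape operator $S^\gamma = -d\tilde\xi = -d\nu$ coincides (up to sign convention) with the classical shape operator; consequently the anisotropic principal curvatures $k^\gamma_1,\dots,k^\gamma_n$ are exactly the classical principal curvatures $k_1,\dots,k_n$ of $X$.

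\textbf{Specializing the dimension and the symmetric functions.} Next I would set $n=2$ and identify the anisotropic mean curvatures with the classical invariants. By Definition~\ref{curv}, $\sigma^\gamma_1 = k_1+k_2$ and $\sigma^\gamma_2 = k_1 k_2$, so $H^\gamma_1 = \sigma^\gamma_1/{}_2C_1 = (k_1+k_2)/2 = H$ and $H^\gamma_2 = \sigma^\gamma_2/{}_2C_2 = k_1 k_2 = K$, the mean and Gaussian curvatures respectively. Since $X$ is a genuine $C^\infty$ immersion (so $S(X)=\emptyset$ and $M$ itself plays the role of the single piece $M_1$), all the $r$-th anisotropic mean curvatures are continuous and hence integrable on the compact manifold $M$, so the hypothesis of the second, polynomial form~\eqref{eq-st} of Theorem~\ref{th-st} is satisfied.

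\textbf{Assembling the formula.} I would then substitute $\gamma\equiv 1$ together with $n=2$ directly into~\eqref{eq-st}, which gives
\[
{\mathcal A}(X_t)={\mathcal F}_\gamma(X_t)=\sum_{r=0}^2 (-1)^r t^r ({}_2C_r)\int_M H^\gamma_r\;dA.
\]
Reading off the three terms with $H^\gamma_0 = 1$, $H^\gamma_1 = H$, $H^\gamma_2 = K$, and the binomial coefficients ${}_2C_0={}_2C_2=1$, ${}_2C_1=2$, yields the $r=0$ term $\int_M dA = {\mathcal A}(X)$, the $r=1$ term $-2t\int_M H\,dA$, and the $r=2$ term $t^2\int_M K\,dA$, which is precisely the asserted identity.

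\textbf{The main obstacle} is essentially bookkeeping rather than analysis: one must verify that the sign and normalization conventions for the shape operator, the principal curvatures, and the normal orientation used in the anisotropic framework (where $S^\gamma = -d\tilde\xi$ and the Wulff shape has anisotropic mean curvature $-1$) reduce cleanly to the classical conventions in which ${\mathcal A}(X_t)$ decreases at rate $2H$ in the direction of $\nu$. In particular I would double-check that the chosen unit normal $\nu$ and the sign of $H$ match so that the middle term carries the correct factor $-2t$; the quadratic term's sign then follows automatically since $k_1 k_2 = K$ is convention-independent. Once these conventions are reconciled, the corollary is immediate from Theorem~\ref{th-st}.
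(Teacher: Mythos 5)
Your proposal is correct and follows essentially the same route as the paper: the paper states Corollary \ref{cst} as an immediate specialization of Theorem \ref{th-st} (with no separate proof given), which is exactly what you carry out by setting $\gamma\equiv 1$, $n=2$, noting $\tilde{\xi}=\nu$, $H^\gamma_1=H$, $H^\gamma_2=K$, and that integrability is automatic for a smooth immersion on compact $M$. Your sign check is also consistent with the paper's conventions (e.g.\ Proposition \ref{CHR}, where the Wulff shape has anisotropic mean curvature $-1$ for the outward normal), so nothing is missing.
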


\vskip0.2truecm

Now assume that $X_t:=X+t\tilde{\xi}:M \to {\mathbb R}^{n+1}$ is an immersion. Let $(u_1, \cdots, u_n)$ be local coordinates in $M$. 
We compute the volume form
\begin{equation}\label{para1}
dA_t=(\det (g_{tij}))^{1/2}du^1\cdots du^n,
\end{equation}
where
$$
g_{tij}:=\langle (X_t)_i, (X_t)_j\rangle, \quad (X_t)_i:=(X_t)_{u_i}.
$$
We compute
\begin{equation}\label{para2}
g_{tij}=\langle X_{ti}, X_{tj}\rangle
=\langle X_i+t\tilde{\xi}_i, X_j+t\tilde{\xi}_j
\rangle
= \langle X_i, X_j\rangle
+ t (\langle X_i, \tilde{\xi}_j\rangle +\langle \tilde{\xi}_i, X_j\rangle)
+ t^2\langle \tilde{\xi}_i, \tilde{\xi}_j\rangle.
\end{equation}
By using (\ref{dxi3}), we have
\begin{equation}\label{xi}
\langle \tilde{\xi}_i, \tilde{\xi}_j\rangle
= \langle -\tilde{h}_{il}g^{lk}X_k, -\tilde{h}_{ja}g^{ab}X_b\rangle
= \tilde{h}_{il}g^{lk}\tilde{h}_{ja}g^{ab}g_{kb}
=\tilde{h}_{il}g^{lk}\tilde{h}_{jk}.
\end{equation}
(\ref{para2}) with (\ref{dxi2}) and (\ref{xi}) gives
\begin{equation}\label{para3}
g_{tij}=g_{ij}-t(\tilde{h}_{ij}+\tilde{h}_{ji})+t^2 \tilde{h}_{il}g^{lk}\tilde{h}_{jk}.
\end{equation}
Define three $n\times n$ matrices as follows:
$$
G:=(g_{ij}), \ \tilde{B}:=(\tilde{h}_{ij}), \ G_t:=(g_{tij}).
$$
Note that $\tilde{B}$ is not symmetric in general.

From (\ref{para3}), we have
\begin{eqnarray}
\det G_t&=&
\det G \cdot
\det \Bigl(
I_n-t(\tilde{B}G^{-1}+{}^t\!\tilde{B}G^{-1})+t^2 (\tilde{B}G^{-1}{}^t\!\tilde{B}G^{-1}),
\Bigr) \label{det1}\\
&=&
\det G \cdot
\det (I_n-t\tilde{B}G^{-1})
\cdot \det(I_n-t\;{}^t\!\tilde{B}G^{-1}), \label{det2}
\end{eqnarray}
here ${}^t\!\tilde{B}$ is the transposed matrix of $\tilde{B}$. 

\begin{lemma}\label{shape2}
(i) 
\begin{equation}\label{shape3}
\det (I_n-t\tilde{B}G^{-1})
=\det(I_n-t\;{}^t\!\tilde{B}G^{-1}). 
\end{equation}
Hence, $\det (I_n-t\tilde{B}G^{-1})$ and $\det(I_n-t\;{}^t\!\tilde{B}G^{-1})$ have the same eigenvalues.

(ii) 
\begin{eqnarray}
\det (I_n-t\tilde{B}G^{-1})
&=&\det(I_n-t\;{}^t\!\tilde{B}G^{-1})\\
&=&
(1-tk^\gamma_1)\cdots(1-t k^\gamma_n) \\
&=&\sum_{r=0}^n (-1)^r t^r \sum_{1 \le l_1< \cdots l_r\le n}
k^\gamma_{l_1}\cdots k^\gamma_{l_r}\\
&=& \sum_{r=0}^n (-1)^r t^r \sigma^\gamma_r 
= \sum_{r=0}^n (-1)^r t^r({}_nC_r) H^\gamma_r.
\end{eqnarray}

(iii)
\begin{eqnarray}
dA_t
&=&(1-tk^\gamma_1)\cdots(1-t k^\gamma_n)\;dA \nonumber\\
&=&
\sum_{r=0}^n (-1)^r t^r({}_nC_r) H^\gamma_r\;dA. \nonumber
\end{eqnarray}
\end{lemma}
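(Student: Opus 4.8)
The plan is to prove Lemma \ref{shape2} in the order (i) $\Rightarrow$ (ii) $\Rightarrow$ (iii), treating the three parts as a cascade where each part feeds the next. The heart of the matter is part (i): the claim that $\det(I_n - t\tilde{B}G^{-1}) = \det(I_n - t\,{}^t\!\tilde{B}G^{-1})$ even though $\tilde{B} = (\tilde{h}_{ij})$ is \emph{not} symmetric in general. Once (i) is established, the factorization (\ref{det2}) of $\det G_t$ becomes the square of a single determinant, and (iii) follows by taking square roots in (\ref{para1}).

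For part (i), the key observation is that $\det(I_n - t\tilde{B}G^{-1})$ and $\det(I_n - t\,{}^t\!\tilde{B}G^{-1})$ are the characteristic-type polynomials of the matrices $\tilde{B}G^{-1}$ and ${}^t\!\tilde{B}G^{-1}$ respectively. I would show these two matrices are \emph{similar}, hence have identical eigenvalues and identical such determinants. The natural similarity is conjugation by $G$: one checks that
\begin{equation}\label{plan-sim}
G^{-1}\bigl({}^t\!\tilde{B}G^{-1}\bigr)G = G^{-1}\,{}^t\!\tilde{B} = {}^t\!\bigl(\tilde{B}G^{-1}\bigr),
\end{equation}
using that $G$ is symmetric. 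Thus ${}^t\!\tilde{B}G^{-1}$ is similar to the transpose of $\tilde{B}G^{-1}$, and since any square matrix is similar to its own transpose, ${}^t\!\tilde{B}G^{-1}$ and $\tilde{B}G^{-1}$ share the same characteristic polynomial. This yields (\ref{shape3}) and the equality of eigenvalues. Here the anisotropic principal curvatures $k^\gamma_1, \dots, k^\gamma_n$ enter as the eigenvalues of $S^\gamma = (\tilde{h}_{ij})(g^{ij}) = \tilde{B}G^{-1}$ (Lemma \ref{shapeL}).

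Part (ii) is then essentially bookkeeping: factoring $\det(I_n - t\tilde{B}G^{-1})$ over its eigenvalues gives $\prod_{j=1}^n(1 - tk^\gamma_j)$, and expanding this product in powers of $t$ produces the elementary symmetric functions $\sigma^\gamma_r$, which are repackaged as $({}_nC_r)H^\gamma_r$ by Definition \ref{curv}. This is the same algebraic expansion already carried out in (\ref{exp3}) of Lemma \ref{HL}, just with $\tau$ replaced by $-t$, so I would cite that computation rather than redo it. Part (iii) then combines (\ref{para1}), (\ref{det2}), and parts (i)–(ii): since $\det G_t = \det G \cdot \det(I_n - t\tilde{B}G^{-1})^2$, taking the square root gives $(\det G_t)^{1/2} = (\det G)^{1/2}\prod_j(1 - tk^\gamma_j)$, and since $dA = (\det G)^{1/2}du^1\cdots du^n$, formula (\ref{para1}) delivers $dA_t$ in the stated form.

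The main obstacle is the sign issue when taking the square root in (iii): one must verify that $\det(I_n - t\tilde{B}G^{-1})$ is the correct (positive) branch so that $dA_t$ is a genuine positive volume form and not its negative. For $t$ in a neighborhood of $0$ this is automatic since the factor tends to $1$, and for general $t$ one argues by continuity, noting that $dA_t$ can vanish (where $X_t$ fails to be an immersion) but cannot change sign without passing through zero, which matches the geometry of the anisotropic parallel hypersurface degenerating at focal heights $t = 1/k^\gamma_j$. A secondary subtlety worth flagging is that the $k^\gamma_j$ need not be real (Lemma \ref{HL} (ii)); however, the determinant and the symmetric functions $\sigma^\gamma_r$ are real regardless, so the final formula is a genuine real-valued identity and the complex eigenvalues, if present, occur in conjugate pairs that cancel in the product.
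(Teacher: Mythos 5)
Your proposal is correct and takes essentially the same approach as the paper: part (i) rests on transpose-invariance of the determinant plus conjugation by the symmetric matrix $G$ (the paper writes this as a chain of determinant identities rather than as your similarity statement, but it is the same computation), part (ii) cites the expansion (\ref{exp3}) with $\tau$ replaced by $-t$, and part (iii) combines (\ref{para1}), (\ref{det2}) and (ii), exactly as in the paper. Your added remark about choosing the square-root branch in (iii) (so that $dA_t$ carries the sign of $\det(I_n - t\tilde{B}G^{-1})$ rather than its absolute value) addresses a point the paper passes over silently; it does not change the argument.
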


\noindent{\it Proof.} \ 
First we prove (i). 
Observe
\begin{eqnarray}
\det (I_n-t\tilde{B}G^{-1})
&=& \det \;{}^t\!(I_n-t\tilde{B}G^{-1})
= \det (\;{}^t\!I_n-t\;{}^t\!G^{-1}\;{}^t\!\tilde{B})\\
&=& \det (I_n-tG^{-1}\;{}^t\!\tilde{B})
= \det G\cdot\det (I_n-tG^{-1}\;{}^t\!\tilde{B})\cdot\det G^{-1}\\
&=&\det(I_n-t\;{}^t\!\tilde{B}G^{-1}),
\end{eqnarray}
which gives (i). 

(ii) is given by (\ref{exp3}) by replacing $\tau$ with $-t$ because 
$S^\gamma=
(\tilde{h}_{ij})(g^{ij})=\tilde{B}G^{-1}$ (Lemma \ref{shapeL}). 

(iii) is given by (\ref{para1}), (\ref{det2}) and (ii) of this lemma. 
\hfill $\Box$

\section{Minkowski-type formula}\label{min}

In this section we give another integral formula that is a generalization of the Minkowski formula
(see \cite{HL2008} for smooth case). 

\begin{theorem}[Minkowski-type formula]\label{th-min}
Assume that $\gamma:S^n\to {\mathbb R}_{>0}$ is of $C^2$. 
Assume that $\displaystyle X:M=\cup_{i=1}^k M_i \rightarrow{\mathbb R}^{n+1}$ is a closed piecewise-$C^2$ weak immersion whose $r$-th anisotropic mean
curvature for $\gamma$ is integrable on $M$ for $r=1, \cdots, n$. 
Assume also that $X$ satisfies the following condition.

$$
\tilde{\xi}_i(\zeta)=\tilde{\xi}_j(\zeta), \quad \forall \zeta \in \partial M_i\cap \partial M_j, 
$$
here
$\tilde{\xi}_i:=\xi\circ\nu_i :M_i \to {\mathbb R}^{n+1}$ is the Cahn-Hoffman field along $X|_{M_i}$, and the tangent space of a submanifold of ${\mathbb R}^{n+1}$ is naturally identified with a linear subspace of ${\mathbb R}^{n+1}$. 

Then the following integral formula holds.
\begin{equation}\label{eq-min}
\int_M (\gamma(\nu) H^\gamma_r + \langle X, \nu\rangle  H^\gamma_{r+1})\;dA
=0, \quad r=0, \cdots, n-1,
\end{equation}
where $H^\gamma_r$ is the $r$-th anisotropic mean curvature of $X$ (Definition \ref{curv}). 
In the special case where $r=0$, we have
\begin{equation}\label{eq-min1}
\int_M (\gamma(\nu)+\Lambda \langle X, \nu\rangle) \;dA=0.
\end{equation}
\end{theorem}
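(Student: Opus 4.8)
The plan is to compute the signed $(n+1)$-dimensional volume $V(t)$ enclosed by the anisotropic parallel hypersurface $X_t := X + t\tilde{\xi}$ (Definition \ref{para}) in two different ways, obtain two polynomial expressions in $t$, and then match coefficients. Two structural facts drive everything. First, by (\ref{dxi3}) the derivatives $\tilde{\xi}_i = -\tilde{h}_{il}g^{lj}X_j$ are tangent to $X$, so wherever $X_t$ is an immersion its tangent space---and hence its unit normal---coincides with that of $X$; thus $\nu_t = \nu$ is independent of $t$. Second, $\langle \tilde{\xi}, \nu\rangle = \langle D\gamma + \gamma\nu, \nu\rangle = \gamma(\nu)$.

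For the first computation I would apply the first variation of volume (\ref{firstv}) to the deformation $X_t$ itself: its velocity is $\partial_t X_t = \tilde{\xi}$, whose normal component is $\langle\tilde{\xi},\nu\rangle = \gamma(\nu)$, so $V'(t) = \int_M \gamma(\nu)\,dA_t$. Feeding in the Steiner-type expansion $dA_t = \sum_{r=0}^n (-1)^r t^r\,({}_nC_r)\,H^\gamma_r\,dA$ (Theorem \ref{th-st}), which is legitimate precisely because the $H^\gamma_r$ are assumed integrable, gives $V'(t) = \sum_{r=0}^n (-1)^r t^r\,({}_nC_r)\int_M \gamma(\nu)H^\gamma_r\,dA$.

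For the second computation I would use the divergence identity $V(t) = \frac{1}{n+1}\int_M \langle X_t, \nu\rangle\,dA_t$ for the same signed volume. Since $\langle X_t, \nu\rangle = \langle X, \nu\rangle + t\gamma(\nu)$, substituting the Steiner expansion once more expresses $V(t)$ as an explicit polynomial in $t$ whose coefficients are the integrals $\int_M\langle X,\nu\rangle H^\gamma_r\,dA$ and $\int_M \gamma H^\gamma_r\,dA$. Differentiating this polynomial and equating it with the first expression for $V'(t)$, then reading off the coefficient of $t^p$ for $0 \le p \le n-1$ and using the elementary identity $(n-p)\,({}_nC_p) = (p+1)\,({}_nC_{p+1})$, yields exactly $\int_M(\gamma(\nu)H^\gamma_p + \langle X,\nu\rangle H^\gamma_{p+1})\,dA = 0$; the case $p=0$ is (\ref{eq-min1}).

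The coefficient bookkeeping is routine; the genuine work is justifying the two volume identities in the piecewise-$C^2$, singular setting. The hypothesis $\tilde{\xi}_i(\zeta) = \tilde{\xi}_j(\zeta)$ on every edge $\partial M_i\cap\partial M_j$ is exactly what makes $X_t = X + t\tilde{\xi}$ a continuous, genuinely closed weakly immersed hypersurface: without it the parallel pieces would not glue, and both the divergence formula and (\ref{firstv}) would acquire spurious edge terms. I therefore expect the main obstacle to be showing that no such internal-boundary contributions survive---that is, that the continuity of $\tilde{\xi}$ suffices to run the divergence theorem and the first-variation identity globally---together with controlling the singular set $S(X)$ and the improper integrals by the same measure-zero and integrability arguments already used to establish the Steiner-type formula (Theorem \ref{th-st}).
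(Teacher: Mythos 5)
Your proposal is correct and takes essentially the same route as the paper: its Lemma \ref{lem-V} is exactly your first computation of $V'(t)$ via the first variation of volume (\ref{firstv}) together with the Steiner-type expansion, and its main argument is your second computation via $V(X_t)=\frac{1}{n+1}\int_M\langle X_t,\nu\rangle\,dA_t$, followed by the same coefficient matching using $(n-r)\,({}_nC_r)=(r+1)\,({}_nC_{r+1})$. The structural points you flag---$\nu_t=\nu$, $\langle\tilde{\xi},\nu\rangle=\gamma(\nu)$, and the edge condition guaranteeing $X_t$ is a genuine closed hypersurface---are precisely the ones the paper relies on.
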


Before proving Theorem \ref{th-min}, we give a useful lemma:

\begin{lemma}\label{lem-V}
Assume the same assumption as Theorem \ref{th-min}. 
For $t \in {\mathbb R}$, $|t|<<1$, set
$$X_t:=X+t\tilde{\xi}.$$
Then, 
\begin{eqnarray}
\frac{d}{dt}V(X_t)&=&\int_M\gamma(\nu) \;dA_t
={\mathcal F}_\gamma(X_t) \label{dv0}\\
&=&\sum_{r=0}^n (-1)^r ({}_nC_r)t^r\int_M \gamma(\nu)  H^\gamma_r\;dA \label{dv}
\end{eqnarray}
holds.
\end{lemma}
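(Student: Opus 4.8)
The plan is to prove Lemma \ref{lem-V} by connecting the first variation of the enclosed volume under the anisotropic parallel deformation $X_t = X + t\tilde\xi$ to the anisotropic energy, and then applying the Steiner-type formula (Theorem \ref{th-st}) to expand in powers of $t$. The key observation is that the variation vector field of the family $X_t$ is exactly $\tilde\xi$, so its normal component is $\psi = \langle \tilde\xi, \nu\rangle = \langle \xi_\gamma(\nu), \nu\rangle = \gamma(\nu)$ at each regular point, by the support-function identity established in Proposition \ref{support}.

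First I would apply the first variation formula for the enclosed volume, equation (\ref{firstv}), to the one-parameter family $X_t$. Since $\frac{\partial X_t}{\partial t} = \tilde\xi$, the relevant normal component is $\psi = \langle \tilde\xi, \nu\rangle = \gamma(\nu)$, and therefore
\begin{equation}
\frac{d}{dt}V(X_t) = \int_M \langle \tilde\xi, \nu\rangle \;dA_t = \int_M \gamma(\nu)\;dA_t = {\mathcal F}_\gamma(X_t),
\end{equation}
where $dA_t$ is the volume form induced by $X_t$. Here the hypothesis that $\tilde\xi$ is globally well-defined and continuous across the edges $\partial M_i \cap \partial M_j$ (the condition $\tilde\xi_i(\zeta) = \tilde\xi_j(\zeta)$ assumed in Theorem \ref{th-min}) is what legitimizes treating $X_t$ as a genuine global variation of the closed weak immersion, so that the boundary contributions from adjacent pieces cancel and the volume derivative reduces cleanly to the energy of the parallel hypersurface. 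This establishes (\ref{dv0}).

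Next I would substitute the Steiner-type expansion. By Theorem \ref{th-st} (specifically equation (\ref{eq-st})), under the integrability hypothesis on the $r$-th anisotropic mean curvatures, we have
\begin{equation}
{\mathcal F}_\gamma(X_t) = \sum_{r=0}^n (-1)^r t^r ({}_nC_r)\int_M \gamma(\nu) H^\gamma_r\;dA,
\end{equation}
which is precisely (\ref{dv}). This completes the proof once (\ref{dv0}) is in hand.

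The main obstacle is the careful justification of the first step at the singular set. The map $X_t$ is only an immersion away from $S(X)$, and the anisotropic principal curvatures $k^\gamma_i$ may be unbounded near singular points (cf. Theorem \ref{key1}); so one must verify that the volume functional $V(X_t)$ is differentiable in $t$ and that formula (\ref{firstv}) applies despite $\tilde\xi$ being the variation field of a piecewise-$C^2$, rather than smooth, object. The integrability assumption on each $H^\gamma_r$ is exactly what guarantees that the integrals $\int_M \gamma(\nu) H^\gamma_r\;dA$ converge and that the polynomial-in-$t$ expression for ${\mathcal F}_\gamma(X_t)$ is valid as an honest finite polynomial, so the differentiation in $t$ commutes with integration over $M$; I would lean on Theorem \ref{th-st} to handle this, treating the singular set $S(X)$ as having measure zero (Lemma \ref{measure}) and passing to the limit through the regular part.
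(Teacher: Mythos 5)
Your proposal is correct and follows essentially the same route as the paper: apply the volume first-variation formula (\ref{firstv}) to the family $X_t$ with variation field $\tilde{\xi}$, use $\langle\tilde{\xi},\nu\rangle=\gamma(\nu)$ to identify $\frac{d}{dt}V(X_t)$ with ${\mathcal F}_\gamma(X_t)$, and then expand via the Steiner-type formula (\ref{eq-st}). The only point worth stating explicitly (the paper's proof opens with it) is that the unit normal along $X_t$ coincides with $\nu$, by Proposition \ref{PCH} (i); this is what justifies pairing the variation field against $\nu$ and integrating against $dA_t$ for every small $t$, not only at $t=0$.
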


\noindent {\it Proof.} \ 
The unit normal along $X_t$ is the same as that of $X$. Hence, using (\ref{firstv}) and Theorem \ref{th-st}, we obtain
\begin{eqnarray}
\frac{d}{dt}V(X_t)&=&\int_M \langle \tilde{\xi}, \nu\rangle \;dA_t
=\int_M \langle D\gamma|_\nu+\gamma(\nu)\nu, \nu\rangle \;dA_t \nonumber\\
&=&\int_M\gamma(\nu) \;dA_t
={\mathcal F}_\gamma(X_t) \nonumber\\
&=&\sum_{r=0}^n (-1)^r ({}_nC_r)t^r\int_M \gamma(\nu)  H^\gamma_r\;dA. \nonumber
\end{eqnarray}

\hfill $\Box$

\noindent{\it Proof of Theorem \ref{th-min}.}\ 
For $t \in {\mathbb R}$, $|t|<<1$, set
$$X_t:=X+t\tilde{\xi}.$$
Since the unit normal along $X_t$ is the same as that of $X$, it holds that 
\begin{equation}
V(X_t)=\frac{1}{n+1}
\int_M\langle X_t, \nu\rangle \;dA_t
=\frac{1}{n+1}
\int_M\langle X+t\tilde{\xi}, \nu\rangle \;dA_t.
\end{equation}
Hence we have
\begin{eqnarray}
\frac{d}{dt}V(X_t) &=&
\frac{1}{n+1}\int_M\langle\tilde{\xi}, \nu\rangle \;dA_t
+ \frac{1}{n+1}
\int_M\langle X+t\tilde{\xi}, \nu\rangle \;\frac{\partial(dA_t)}{\partial t} \label{III}\\
&=:& I+II. \label{I-II}
\end{eqnarray}
We compute, using Lemma \ref{shape2} (iii), to obtain
\begin{eqnarray}
II&:=&
\frac{1}{n+1}
\int_M\langle X+t\tilde{\xi}, \nu\rangle \;\frac{\partial(dA_t)}{\partial t}\nonumber\\
&=&
\frac{1}{n+1}
\int_M (\langle X, \nu\rangle
+t\gamma(\nu)) \;\frac{\partial}{\partial t}\sum_{r=0}^n (-1)^r t^r({}_nC_r) H^\gamma_r\;dA \nonumber\\
&=&
\frac{1}{n+1}
\int_M (\langle X, \nu\rangle
+t\gamma(\nu)) \sum_{r=1}^n (-1)^r rt^{r-1}({}_nC_r) H^\gamma_r\;dA \nonumber\\
&=&
\frac{1}{n+1}
\sum_{r=1}^n (-1)^r rt^{r-1}({}_nC_r)
\int_M \langle X, \nu\rangle  H^\gamma_r\;dA \nonumber\\
&&+
\frac{1}{n+1}
\sum_{r=1}^n (-1)^r rt^r({}_nC_r)
\int_M 
\gamma(\nu)  H^\gamma_r\;dA \nonumber\\
&=&
\frac{1}{n+1}
\sum_{r=0}^{n-1} (-1)^{r+1} (r+1)t^r({}_nC_{r+1})
\int_M \langle X, \nu\rangle  H^\gamma_{r+1}\;dA \nonumber\\
&&+
\frac{1}{n+1}
\sum_{r=1}^n (-1)^r rt^r({}_nC_r)
\int_M 
\gamma(\nu) H^\gamma_r\;dA.\label{II}
\end{eqnarray}
Again, by using Lemma \ref{shape2} (iii), we obtain
\begin{eqnarray}
I&:=&\frac{1}{n+1}\int_M\langle\tilde{\xi}, \nu\rangle \;dA_t
=\frac{1}{n+1}\int_M\langle D\gamma+\gamma \nu, \nu\rangle \;dA_t \nonumber\\
&=&
\frac{1}{n+1}\int_M \gamma \;dA_t
=\frac{1}{n+1}
\sum_{r=0}^n (-1)^r ({}_nC_r)t^r\int_M \gamma(\nu)  H^\gamma_r\;dA. \label{eqI}
\end{eqnarray}
On the other hand, by Lemma \ref{lem-V}, we have
\begin{equation}\label{dvdt}
\frac{d}{dt}V(X_t)
=\sum_{r=0}^n (-1)^r ({}_nC_r)t^r\int_M \gamma(\nu)  H^\gamma_r\;dA.
\end{equation}
The equalities (\ref{I-II}), (\ref{II}),  
 (\ref{eqI}) and (\ref{dvdt}) give
\begin{eqnarray}
\sum_{r=0}^n (-1)^r ({}_nC_r)t^r\int_M \gamma(\nu)  H^\gamma_r\;dA
&=&
\frac{1}{n+1}
\sum_{r=0}^n (-1)^r ({}_nC_r)t^r\int_M \gamma(\nu)  H^\gamma_r\;dA \nonumber\\
&&+
\frac{1}{n+1}
\sum_{r=0}^{n-1} (-1)^{r+1} (r+1)t^r({}_nC_{r+1})
\int_M \langle X, \nu\rangle  H^\gamma_{r+1}\;dA \nonumber\\
&&+
\frac{1}{n+1}
\sum_{r=1}^n (-1)^r rt^r({}_nC_r)
\int_M 
\gamma(\nu)  H^\gamma_r\;dA,  \label{f-min}
\end{eqnarray}
that is,
\begin{equation} \label{f-min2}
\sum_{r=0}^n (-1)^r (n-r)({}_nC_r)t^r\int_M \gamma(\nu)  H^\gamma_r\;dA
=
-\sum_{r=0}^{n-1} (-1)^r (r+1)t^r({}_nC_{r+1})
\int_M \langle X, \nu\rangle  H^\gamma_{r+1}\;dA, 
\end{equation}
which gives
\begin{equation} \label{f-min3}
\sum_{r=0}^{n-1} (-1)^r (n-r)({}_nC_r)t^r\int_M (\gamma(\nu)  H^\gamma_r + \langle X, \nu\rangle  H^\gamma_{r+1})\;dA
=0.
\end{equation}
(\ref{f-min3}) implies
\begin{equation}\label{f-min4}
\int_M (\gamma(\nu)  H^\gamma_r + \langle X, \nu\rangle  H^\gamma_{r+1})\;dA
=0, \quad r=0, \cdots, n-1,
\end{equation}
which is the desired result. 
\hfill $\Box$

\begin{remark}
If we want to have only the formula (\ref{eq-min1}), the following proof works. 

For $t \in {\mathbb R}$, $|t|<<1$, set
$$
X_t:=(1+t)X.
$$
Then we have
$$
f(t):={\mathcal F}_\gamma(X_t)
=\int_M\gamma(\nu)(1+t)^n\;dA,
$$
and 
\begin{equation}\label{eqa}
f'(t)=n(1+t)^{n-1}\int_M\gamma(\nu)\;dA.
\end{equation}
Hence
\begin{equation}\label{eqb}
f'(0)=n\int_M\gamma(\nu)\;dA
\end{equation}
holds. On the other hand, the first variation formula (Lemma \ref{firstFF}) gives
\begin{equation}\label{eqc}
f'(0)=-n\int_M \Lambda \Bigl\langle
\frac{\partial X_t}{\partial t}\Big|_{t=0}, 
\nu \Bigr\rangle \;dA
=-n\int_M \Lambda \langle
X, 
\nu \rangle \;dA.
\end{equation}
(\ref{eqb}) with (\ref{eqc}) gives (\ref{eq-min1}). 
\end{remark}

\section{Proof of Theorem \ref{PK2017}}\label{s-unique}

First we give an outline of the proof of Theorem \ref{PK2017}. 
The idea is to generalize the proof of the uniqueness of stable closed CMC hypersurface in ${\mathbb R}^{n+1}$ given 
by \cite{HW1991}, which was used also in \cite{P1998} and \cite{P2012}.  
In order to prove Theorem \ref{PK2017}, we first recall that the Cahn-Hoffman field $\tilde{\xi}$ along a closed CAMC hypersurface $X:M=\cup_{i=1}^k M_i \to {\mathbb R}^{n+1}$ is well-defined on the whole of $M$ (Theorem \ref{CHP}). 
Then, we consider 
the anisotropic parallel hypersurfaces $X_t:=X+t\tilde{\xi}$, ($t \in {\mathbb R}$, $|t|<<1$) of $X$. 
By taking homotheties of $X_t$ if necessary, we have a volume-preserving variation $Y_t=\mu(t)X_t=\mu(t)(X+t\tilde{\xi})$, ($\mu(t) >0$, $\mu(0)=1$) of $X$. 
By using the Steiner-type formula (Theorem \ref{th-st}) and the Minkowski-type formula (Theorem \ref{th-min}), we prove that 
$\displaystyle 
\frac{d^2{\mathcal F}_\gamma(Y_t)}{dt^2}\Big|_{t=0}
=\frac{-1}{n}\int_M \gamma(\nu) 
\sum_{1 \le i < j\le n}(k_i^\gamma-k_j^\gamma)^2\; dA
$
holds, where $k_i^\gamma$ are the anisotropic principal curvatures of $X$ (Definition \ref{curv}). 
Since $\gamma$ is convex, all $k_i^\gamma$ are real values on $M\setminus S(X)$ (Lemma \ref{HL}). Hence, if $X$ has constant anisotropic mean curvature $\Lambda$ and stable, $k_1^\gamma=\cdots =k_n^\gamma=\Lambda/n \ne 0$ holds 
on $M\setminus S(X)$, from which we can prove that $X(M) = (1/|\Lambda|)W$
holds. 

\begin{remark}\label{uniqueBP2}{\rm 
B. Palmer \cite{P2012} proved the same conclusion as our Theorem \ref{PK2017} under some extra assumptions. Actually, under the following assumptions (i) - (vii), he proved that $X(M)$ coincides with $W_\gamma$ (up to homothety and translation).

(i) $n=2$.

(ii) $\gamma:S^2 \to {\mathbb R}_{>0}$ is of $C^3$ and convex.

(iii) Wulff shape $W_\gamma$ is piecewise smooth, and there exists a positive value $c$ such that all of the principal curvatures of $W_\gamma$ with respect to the inward-pointing normal are bounded from below by $c$. 

(iv) $X:M\to {\mathbb R}^3$ is a closed piecewise sooth surface without self-intersection with unit normal $\nu:M\setminus {\cal S}(X) \to S^2$, where ${\cal S}(X)$ is the set af all singular points of $X$. 

(v) The Cahn-Hoffman field $\tilde{\xi}:M\setminus {\cal S}(X) \to {\mathbb R}^3$ can be continuously extended to the whole of $M$. 

(vi) $\nu(M\setminus {\cal S}(X))$ is included in the set of all unit normals to $W_\gamma$. 

(vii) $X$ is stable. 
}\end{remark}


\noindent {\it Proof of Theorem \ref{PK2017}.} \ 
Assume that $X:M=\cup_{i=1}^k M_i \to {\mathbb R}^{n+1}$ is a closed piecewise-$C^2$ CAMC hypersurface for $\gamma:S^n \to {\mathbb R}_{>0}$. 
Because of Lemma \ref{CHP}, the Cahn-Hoffman field $\tilde{\xi}:M \to {\mathbb R}^{n+1}$ along X can be defined on $M$. 
Consider the anisotropic parallel hypersurfaces $X_t:M \to {\mathbb R}^{n+1}$ of $X$, that is
$$
X_t:=X+t\tilde{\xi}, \quad t \in {\mathbb R}, |t|<<1.
$$
By taking homotheties of $X_t$ if necessary, we have a volume-preserving variation $Y_t$ of $X$ which is represented as follows.
$$
Y_t=\mu(t)X_t=\mu(t)(X+t\tilde{\xi}), \quad \mu(t) >0, \ \mu(0)=1.
$$
Set
$$
F_0:={\mathcal F}_\gamma(X), \quad V_0:=V(X).
$$
And set
$$
f(t):={\mathcal F}_\gamma(Y_t), \quad v(t):=V(Y_t). 
$$
Then,
\begin{equation}\label{fv}
f(t)=(\mu(t))^n {\mathcal F}_\gamma(X_t), \quad 
v(t)=(\mu(t))^{n+1} V(X_t), \quad f(0)=F_0, \quad v(0)=V_0.
\end{equation}
We will compute $f''(0)$. 
In order to do it, we will compute $\mu'(0)$ and $\mu''(0)$ by using $v(t)\equiv V_0$.  

From (\ref{fv}), we have
\begin{equation}\label{v'}
v'(t)=(n+1)(\mu(t))^n\mu'(t)V(X_t)
+(\mu(t))^{n+1}\frac{dV(X_t)}{dt},
\end{equation}
\begin{eqnarray}
v''(t)&=&(n+1)n(\mu(t))^{n-1}(\mu'(t))^2V(X_t)
+(n+1)(\mu(t))^n\mu''(t)V(X_t) \nonumber\\
&&+2(n+1)(\mu(t))^n\mu'(t)\frac{dV(X_t)}{dt}
+(\mu(t))^{n+1}\frac{d^2V(X_t)}{dt^2}.\label{v''}
\end{eqnarray}
From (\ref{dv0}), 
\begin{equation}\label{dv00}
\frac{dV(X_t)}{dt}\Big|_{t=0}=F_0.
\end{equation}
Using $v'(0)=0$, (\ref{v'}) with (\ref{dv00}) gives
\begin{equation}\label{mu1}
\mu'(0)=-\frac{F_0}{(n+1)V_0}.
\end{equation}
Using (\ref{dv}), we obtain
\begin{equation}\label{v''0}
\frac{d^2V(X_t)}{dt^2}\Big|_{t=0}
=-({}_nC_1)\int_M\gamma\Lambda\;dA
=-n\Lambda F_0.
\end{equation}
From (\ref{eq-min1}) in Theorem \ref{th-min}, we have
$$
F_0+\Lambda(n+1)V_0=0,
$$
and hence
\begin{equation}\label{lam}
\Lambda=\frac{-F_0}{(n+1)V_0}.
\end{equation}
Using (\ref{v''}), (\ref{dv00}), (\ref{mu1}), (\ref{v''0}) and (\ref{lam}), we obtain
\begin{equation}\label{v''1}
0=v''(0)
=\frac{-2}{n+1}\cdot\frac{F_0^2}{V_0}+(n+1)V_0\mu''(0),
\end{equation}
which gives
\begin{equation}\label{mu2}
\mu''(0)
=\frac{2}{(n+1)^2}\cdot\frac{F_0^2}{V_0^2}.
\end{equation}

From (\ref{fv}), we have
\begin{equation}\label{f'}
f'(t)=n(\mu(t))^{n-1}\mu'(t){\mathcal F}_\gamma(X_t)
+(\mu(t))^n\frac{d{\mathcal F}_\gamma(X_t)}{dt},
\end{equation}
\begin{eqnarray}
f''(t)&=&n(n-1)(\mu(t))^{n-2}(\mu'(t))^2{\mathcal F}_\gamma(X_t)
+n(\mu(t))^{n-1}\mu''(t){\mathcal F}_\gamma(X_t) \nonumber\\
&&+2n(\mu(t))^{n-1}\mu'(t)\frac{d{\mathcal F}_\gamma(X_t)}{dt}
+(\mu(t))^n\frac{d^2{\mathcal F}_\gamma(X_t)}{dt^2}.\label{f''}
\end{eqnarray}

Using (\ref{eq-st}) in Theorem \ref{th-st}, (\ref{mu1}), (\ref{lam}), (\ref{mu2}) and (\ref{f''}), we obtain
\begin{eqnarray}
f''(0)&=&
\frac{-n(n-1)}{(n+1)^2}\cdot\frac{F_0^3}{V_0^2}+2({}_nC_2)\int_M\gamma(\nu)H_2^\gamma\;dA \nonumber\\
&=& -n(n-1)\Lambda^2F_0+2\int_M \gamma(\nu) \sum_{1 \le i < j\le n}k_i^\gamma k_j^\gamma\;dA \nonumber\\
&=&
\frac{-(n-1)}{n}\int_M \gamma(\nu)
\Bigl[
(k_1^\gamma+\cdots + k_n^\gamma)^2
-\frac{2n}{n-1}
\sum_{1 \le i < j\le n}k_i^\gamma k_j^\gamma
\Bigr]
\;dA \nonumber\\
&=&
\frac{-1}{n}\int_M \gamma(\nu) 
\sum_{1 \le i < j\le n}(k_i^\gamma-k_j^\gamma)^2\; dA. \label{f''0}
\end{eqnarray}
Note that, since $\gamma$ is convex, by Lemma \ref{HL}, all $k_i^\gamma$ are real values on $M\setminus S(X)$. If $X$ is stable, then $f''(0)$ must be non-negative, and so  (\ref{f''0}) implies that
\begin{equation}
k_1^\gamma=\cdots =k_n^\gamma
\end{equation}
holds at each point in $M\setminus S(X)$. Since $\Lambda=(k_1^\gamma+\cdots +k_n^\gamma)/n$ is constant on $M\setminus S(X)$,
\begin{equation}
k_1^\gamma=\cdots =k_n^\gamma=\Lambda/n
\end{equation}
holds on $M\setminus S(X)$. Therefore, from Corollary 1 in \cite{R}, 
\begin{equation}\label{rei}
X(M\setminus S(X)) \subset rW
\end{equation}
holds for some $r>0$. Because $M$ is closed and $W$ has anisotropic mean curvature $-1$, (\ref{rei}) implies 
$$
X(M) = (1/|\Lambda|)W.
$$
\hfill $\Box$

\section{Proof of Proposition \ref{FV}}\label{PFV}

For simplicity, we write $M$ instead of $M_0$. 
We take an exhaustion $\{M_{t}\}_{0\le t<1}$ of $M^o$. That is, 

(i) $M_{t} \subset M^o$, $\forall t \in [0, 1)$, and

(ii) For any compact set $K\subset M^o$, there exists a number $t_0 \in  (0, 1)$ such that $M_t \supset K$ for all $t \in (t_0, 1)$.

\noindent Set $M_1:=M$. 

Let 
$X_\epsilon:M\to {\mathbb R}^{n+1}$, ($\epsilon \in J:=[-\epsilon_0, \epsilon_0]$, $\epsilon_0>0$), be a variation of $X$. Set $X_\epsilon^t:=X_\epsilon|_{M_t}$. 
And set
$$
\psi_\epsilon:=\langle \frac{\partial X_\epsilon}{\partial\epsilon}, \nu_\epsilon\rangle.
$$
Denote by $\Lambda_\epsilon$ the anisotropic mean curvature of $X_\epsilon$, and by $dA_\epsilon$ the volume form of $M$ with the metric induced by $X_\epsilon$. Note that $\Lambda_\epsilon$ is defined at regular points of $X_\epsilon$. Also note that, if $X_\epsilon$ has singular points in $\partial M$, then $dA_\epsilon$ is singular but it is well-defined at these points. From Lemma \ref{firstFF}, we have
\begin{equation} \label{FV0}
\delta {\mathcal F_\gamma}^t :=
\frac{d{\mathcal F}_\gamma(X_\epsilon^t)}{d\epsilon}\Big|_{\epsilon=0}
=
- \int_{M_t} n\Lambda\psi\;dA
-\oint_{\partial {M_t}} \langle \delta X, R(p(\tilde{\xi}))\rangle\;d{\tilde s}=:I(t)+I\!I(t)
\end{equation}
for $t \in [0, 1)$. 
For $\epsilon \in [-\epsilon_0, 0)\cup (0, \epsilon_0]$ and $t \in [0, 1]$, set
\begin{equation}\label{ff}
f(\epsilon, t):=\int_{M_t}\frac{1}{\epsilon}\Bigl(
\gamma(\nu_\epsilon)\;dA_\epsilon - \gamma(\nu) \;dA
\Bigr).
\end{equation}
Then, (\ref{FV0}) implies that
\begin{equation}\label{ff2}
\lim_{\epsilon \to 0} f(\epsilon, t)=
\frac{d{\mathcal F}_\gamma(X_\epsilon^t)}{d\epsilon}\Big|_{\epsilon=0}
=I(t)+I\!I(t), \quad t \in [0, 1)
\end{equation}
holds. 

We also have the following. For any $\epsilon \in (-\epsilon_0, 0)\cup (0, \epsilon_0)$ and $t \in (0, 1)$, there exists a number $\theta \in (0, 1)$ such that
\begin{equation} \label{FV10}
f(\epsilon, t)=
\frac{d{\mathcal F}_\gamma(X_\sigma^t)}{d\sigma}\Big|_{\sigma=\theta\epsilon}
=
- \int_{M_t} n\Lambda_{\theta\epsilon}\psi_{\theta\epsilon}\;dA_{\theta\epsilon}
-\oint_{\partial {M_t}} \langle \frac{\partial X_\sigma}{\partial\sigma}\Big|_{\sigma=\theta\epsilon}, R(p(\tilde{\xi}_{\theta\epsilon}))\rangle\;d{\tilde s}_{\theta\epsilon}=:I_{\theta\epsilon}(t)+I\!I_{\theta\epsilon}(t)
\end{equation}
holds. 
Set
\begin{equation}\label{C1}
c_1(\epsilon)=\sup_{M^0} |\Lambda_{\theta\epsilon}\psi_{\theta\epsilon}-\Lambda_0\psi_0|,
\end{equation}
\begin{equation}\label{C2}
c_2=\sup_{M^0} |\Lambda_0\psi_0|.
\end{equation}
Because $\Lambda_\epsilon$ is bounded on $M^0$ for any $\epsilon$, $c_1(\epsilon)$ and $c_2$ are finite numbers. 

In order to prove Proposition \ref{FV}, we prepare several claims. 

\begin{claim}\label{c1}
The convergence in (\ref{ff2}) is uniform with respect to $t$.
\end{claim}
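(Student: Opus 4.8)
The plan is to start from the mean-value representation (\ref{FV10}) and estimate the difference $f(\epsilon,t)-\bigl(I(t)+I\!I(t)\bigr)$ by a quantity tending to $0$ as $\epsilon\to0$ \emph{independently of} $t$. Since $M_t\subset M^o$ and $X|_{M_t}$ is an immersion for $t\in[0,1)$, Lemma \ref{firstFF} gives (\ref{FV0}), and the mean value theorem applied to $\sigma\mapsto\mathcal F_\gamma(X_\sigma^t)$ produces (\ref{FV10}) with some $\theta=\theta(\epsilon,t)\in(0,1)$. Combining (\ref{ff2}) and (\ref{FV10}),
$$
f(\epsilon,t)-\bigl(I(t)+I\!I(t)\bigr)
=\bigl(I_{\theta\epsilon}(t)-I(t)\bigr)+\bigl(I\!I_{\theta\epsilon}(t)-I\!I(t)\bigr).
$$
The whole idea is that, although the domains $M_t$ and their boundaries $\partial M_t$ move with $t$, every estimate below is obtained by enlarging $M_t$ to $M$ (resp. by a uniform bound on the measure of $\partial M_t$), so that the resulting bounds no longer depend on $t$.

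For the bulk term I would write
$$
\Lambda_{\theta\epsilon}\psi_{\theta\epsilon}\,dA_{\theta\epsilon}-\Lambda_0\psi_0\,dA
=(\Lambda_{\theta\epsilon}\psi_{\theta\epsilon}-\Lambda_0\psi_0)\,dA_{\theta\epsilon}
+\Lambda_0\psi_0\,(dA_{\theta\epsilon}-dA),
$$
integrate over $M_t\subset M$, and use (\ref{C1}) and (\ref{C2}) to obtain
$$
|I_{\theta\epsilon}(t)-I(t)|
\le n\,c_1(\epsilon)\int_M dA_{\theta\epsilon}
+n\,c_2\int_M|dA_{\theta\epsilon}-dA|.
$$
Because $X_\epsilon$ is $C^\infty$ in $\epsilon$ and $M$ is compact, $\int_M dA_{\theta\epsilon}$ stays bounded and $\int_M|dA_{\theta\epsilon}-dA|\to0$ as $\epsilon\to0$, uniformly in $\theta\in(0,1)$; together with $c_1(\epsilon)\to0$ (which follows from the uniform convergence $\Lambda_\sigma\psi_\sigma\to\Lambda_0\psi_0$ on $M^o$ guaranteed by the smoothness of the variation and the boundedness hypothesis on $\Lambda_\epsilon$), the right-hand side tends to $0$ independently of $t$.

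For the boundary term I would first fix the exhaustion $\{M_t\}$ to be modeled on a collar of $\partial M$, so that the induced $(n-1)$-volumes $\mathrm{vol}_{n-1}(\partial M_t)$ are bounded by a constant independent of $t$. Since $X$ is $C^2$ up to $\partial M$ by (A1), $\nu$ extends $C^1$ up to $\partial M$ by (A3), and $\xi_\gamma$ is $C^1$, the integrand $\langle\partial_\sigma X_\sigma,R(p(\tilde\xi_\sigma))\rangle\,d\tilde s_\sigma$ converges, as $\sigma\to0$, to $\langle\delta X,R(p(\tilde\xi))\rangle\,d\tilde s$ uniformly over all of $M$; hence $|I\!I_{\theta\epsilon}(t)-I\!I(t)|$ is bounded by $\mathrm{vol}_{n-1}(\partial M_t)$ times a modulus of convergence $c(\epsilon)\to0$, again independently of $t$. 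Combining the two estimates gives a bound on $|f(\epsilon,t)-(I(t)+I\!I(t))|$ that is uniform in $t$ and tends to $0$ as $\epsilon\to0$, which is exactly the assertion of Claim \ref{c1}.

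The main obstacle is the boundary term: the hypersurfaces $\partial M_t$ sweep toward the possibly singular boundary $\partial M$, so one cannot enlarge them to a fixed set as in the bulk estimate. The resolution is to choose the exhaustion so that $\mathrm{vol}_{n-1}(\partial M_t)$ stays uniformly bounded, and to exploit that all geometric quantities entering the boundary integrand are continuous \emph{up to} $\partial M$ and depend smoothly on $\epsilon$, so their convergence is uniform over $M$ rather than merely pointwise on $M^o$.
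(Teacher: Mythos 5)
Your proposal follows essentially the same route as the paper's own proof: both use the mean-value representation (\ref{FV10}) to reduce the claim to estimating $I_{\theta\epsilon}(t)-I(t)$ and $I\!I_{\theta\epsilon}(t)-I\!I(t)$, split each difference by the same product-rule decomposition (difference of integrands times the perturbed volume element, plus the unperturbed integrand times the difference of volume elements), bound the bulk term by enlarging $M_t$ to $M$ with the constants (\ref{C1})--(\ref{C2}), and bound the boundary term by a $t$-independent modulus of convergence times a uniform bound on the $(n-1)$-volumes of $\partial M_t$. Your explicit choice of a collar-type exhaustion to guarantee that $\oint_{\partial M_t}d\tilde{s}_{\theta\epsilon}$ stays bounded is a reasonable way of making precise what the paper simply asserts, but it is a refinement of the same argument rather than a different one.
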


\noindent {\it Proof.} \ 
We compute
\begin{eqnarray}
\frac{1}{n}|I_{\theta\epsilon}(t)-I(t)|
&=&
\Bigl|
\int_{M_t}(\Lambda_{\theta\epsilon}\psi_{\theta\epsilon}dA_{\theta\epsilon}
-\Lambda_0\psi_0dA_0)
\Bigr|\\
&\le & 
\int_{M_t}|\Lambda_{\theta\epsilon}\psi_{\theta\epsilon}dA_{\theta\epsilon}
-\Lambda_0\psi_0dA_0|\\
&\le& 
\int_{M_t}|\Lambda_{\theta\epsilon}\psi_{\theta\epsilon}-\Lambda_0\psi_0|dA_{\theta\epsilon}
+
\int_{M_t}|\Lambda_0\psi_0|\cdot |dA_{\theta\epsilon}-dA_0|\\
&\le&
c_1(\epsilon) 
\int_{M_t}dA_{\theta\epsilon}
+
c_2\int_{M_t}|dA_{\theta\epsilon}-dA_0|\\
&\le&
c_1(\epsilon) 
\int_{M}dA_{\theta\epsilon}
+
c_2\int_{M}|dA_{\theta\epsilon}-dA_0|.
\end{eqnarray}
Since $\displaystyle \int_{M}dA_{\theta\epsilon}$ is bounded and
$$
\lim_{\epsilon \to 0}c_1(\epsilon)=0, \quad 
\lim_{\epsilon \to 0}\int_{M}|dA_{\theta\epsilon}-dA_0|=0
$$
hold, 
$$
|I_{\theta\epsilon}(t)-I(t)| \rightarrow 0, \ \textup{as} \ \epsilon \rightarrow 0
$$
uniformly with respect to $t$. 
Next we have 
\begin{eqnarray}
\frac{1}{n}|I\!I_{\theta\epsilon}(t)-I\!I(t)|
&=& 
\Bigl|\oint_{\partial {M_t}} \langle \delta X|_{\theta\epsilon}, R(p(\tilde{\xi}_{\theta\epsilon}))\rangle\;d{\tilde s}_{\theta\epsilon}
-
\oint_{\partial {M_t}} \langle \delta X|_0, R(p(\tilde{\xi}_0))\rangle\;d{\tilde s}_0
\Bigr|\\
&\le&
\oint_{\partial {M_t}} \Bigl|\langle \delta X|_{\theta\epsilon}, R(p(\tilde{\xi}_{\theta\epsilon}))\rangle\;d{\tilde s}_{\theta\epsilon}
-
\langle \delta X|_0, R(p(\tilde{\xi}_0))\rangle\;d{\tilde s}_0
\Bigr|\\
&\le&
\oint_{\partial {M_t}} \Bigl[\Bigl|\langle \delta X|_{\theta\epsilon}, R(p(\tilde{\xi}_{\theta\epsilon}))\rangle
-
\langle \delta X|_0, R(p(\tilde{\xi}_0))\rangle\Bigr|\;d{\tilde s}_{\theta\epsilon}
\\
&&+
\large|
\langle \delta X|_0, R(p(\tilde{\xi}_0))\rangle\large|\cdot\large|\;d{\tilde s}_{\theta\epsilon}-d{\tilde s}_0
\large|\Bigr]\\
&\le &
c_4(\epsilon)
\oint_{\partial {M_t}} \;d{\tilde s}_{\theta\epsilon}
+c_5
\oint_{\partial {M_t}}
\large|\;d{\tilde s}_{\theta\epsilon}-d{\tilde s}_0
\large|,
\end{eqnarray}
where
\begin{equation}\label{bd1}
c_4(\epsilon)
:=
\max_{0\le t \le 1}
\max_{\partial M_t} \Bigl|\langle \delta X|_{\theta\epsilon}, R(p(\tilde{\xi}_{\theta\epsilon}))\rangle
-
\langle \delta X|_0, R(p(\tilde{\xi}_0))\rangle\Bigr|,
\end{equation}
\begin{equation}\label{bd2}
c_5=
\max_{0\le t \le 1}
\max_{\partial M_t} \Bigl|\langle \delta X|_0, R(p(\tilde{\xi}_0))\rangle\Bigr|.
\end{equation}
Since $\displaystyle \Bigl\{
\oint_{\partial {M_t}} \;d{\tilde s}_{\theta\epsilon} \;;\; t \in [0, 1], \ \epsilon\in [-\epsilon_0, \epsilon_0]
\Bigr\}$ is bounded, and since
$$
\lim_{\epsilon \to 0}c_4(\epsilon)=0, \quad 
\lim_{\epsilon \to 0}\max_{0 \le t \le 1}\oint_{\partial {M_t}}
\large|\;d{\tilde s}_{\theta\epsilon}-d{\tilde s}_0
\large|=0
$$
hold, 
$$
|I\!I_{\theta\epsilon}(t)-I\!I(t)| \rightarrow 0, \ \textup{as} \ \epsilon \rightarrow 0
$$
uniformly with respect to $t$. 
Therefore, the convergence in (\ref{ff2}) is uniform with respect to $t$.
\hfill $\Box$

\vskip0.5truecm

\begin{claim}\label{c3}
The improper integral $\displaystyle \int_M |\Lambda| \; dA$ converges. 
\end{claim}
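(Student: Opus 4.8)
The plan is to exploit the boundedness hypothesis already built into Proposition~\ref{FV}: by assumption the anisotropic mean curvature $\Lambda_\epsilon$ of $X_\epsilon$ is bounded on $M^o$ for every $\epsilon \in J$, so in particular $\Lambda = \Lambda_0$ is bounded on $M^o$. Thus the integrand $|\Lambda|$ is a bounded function on the open set where it is defined, and the only reason the integral is ``improper'' is that $\Lambda$ need not be defined on the singular set $S(X)$. The first step I would record is that, by condition (A2), $X|_{M^o}$ is a $C^2$-immersion; hence $\Lambda$ is defined on all of $M^o$ and $S(X) \subset \partial M$, a set of $n$-dimensional Hausdorff measure zero.

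Next I would show that the total area is finite. Since $M$ is compact and $X$ is of class $C^2$, the metric $g_{ij} = \langle X_i, X_j\rangle$ induced by $X$ is continuous on $M$, hence $\sqrt{\det(g_{ij})}$ is bounded; therefore $\Area(M) = \int_M dA < \infty$ (the same finiteness mechanism used in Lemma~\ref{measure}). Here it is worth noting that at a singular point of $X$ in $\partial M$ the form $dA$ merely degenerates (vanishes) rather than blowing up, so it remains a finite measure.

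Combining these two facts, for every $t \in [0,1)$ one has
$$
\int_{M_t} |\Lambda|\; dA \;\le\; \Bigl(\sup_{M^o}|\Lambda|\Bigr)\int_{M_t} dA \;\le\; \Bigl(\sup_{M^o}|\Lambda|\Bigr)\,\Area(M) \;<\; \infty.
$$
Thus $|\Lambda|$, defined on $M^o$ and extended by $0$ on the measure-zero set $S(X)\subset\partial M$, is dominated by the constant $\sup_{M^o}|\Lambda| \in L^1(M, dA)$, so the improper integral $\int_M |\Lambda|\,dA = \lim_{t\to 1}\int_{M_t}|\Lambda|\,dA$ exists and is finite by the dominated convergence theorem. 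The one point that must not be overlooked—and which I regard as the crux—is that convergence genuinely rests on the standing boundedness assumption on $\Lambda_\epsilon$ in Proposition~\ref{FV}: without it $\Lambda$ could be unbounded near $\partial M$ (just as the principal curvatures of $\xi_\gamma$ blow up near its singular points, cf.\ Theorem~\ref{key1}), and one would instead be forced into a cancellation argument between the blow-up of $\Lambda$ and the degeneration of $dA$, in the spirit of Theorem~\ref{hbdd}.
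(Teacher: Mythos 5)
Your proof is correct and follows essentially the same route as the paper: both rest on the boundedness of $\Lambda$ on $M^o$ (the standing hypothesis of Proposition~\ref{FV}) together with finiteness of the total area of the compact manifold $M$, concluding via an elementary convergence argument (the paper phrases it as a bounded increasing family of integrals over an exhaustion, you phrase it as domination by a constant in $L^1$). Your additional remarks---that $S(X)\subset\partial M$ has measure zero and that the area finiteness is the same mechanism as in Lemma~\ref{measure}---merely make explicit what the paper leaves implicit.
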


\noindent {\it Proof.} \ 
By assumption, the anisotropic mean curvature $\Lambda$ of $X$ is bounded on $M$. Hence, for any exhaustion $\{\hat{M}_{s}\}_{0\le s<1}$ of $M$ (for the definition of exhaustion, see the beginning of \S \ref{PFV}), $\displaystyle \int_{\hat{M}_s} |\Lambda| \; dA$ is bounded and an increasing function of $s$. Therefore, 
$\displaystyle \lim_{s\to 1-0} \int_{\hat{M}_s} |\Lambda| \; dA$ converges, which implies the desired result. 
\hfill $\Box$

\vskip0.5truecm

\begin{claim}\label{c2}
$\lim_{t \to 1-0} I(t)$ and $\lim_{t \to 1-0} I\!I(t)$ converge. Hence 
$\lim_{t \to 1-0}\Bigl(
\lim_{\epsilon \to 0} f(\epsilon, t)
\Bigr)$ converges. And we may write
\begin{equation}\label{ff5}
\lim_{t \to 1-0}\Bigl(
\lim_{\epsilon \to 0} f(\epsilon, t)
\Bigr)
=
- \int_{M} n\Lambda\psi\;dA
-\oint_{\partial {M}} \langle \delta X, R(p(\tilde{\xi}))\rangle\;d{\tilde s}.
\end{equation}
\end{claim}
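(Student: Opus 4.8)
The plan is to prove the two one-sided limits $\lim_{t\to 1-0}I(t)$ and $\lim_{t\to 1-0}I\!I(t)$ separately, obtaining the first from the absolute convergence established in Claim \ref{c3} and the second from a collar-adapted choice of the exhaustion, and then to add the two limits using (\ref{ff2}).

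First I would dispose of the interior term. Since $I(t)=-n\int_{M_t}\Lambda\psi\;dA$ and $\psi=\langle\delta X,\nu\rangle$ is continuous on the compact manifold $M$, $\psi$ is bounded, say $|\psi|\le C$. Hence $|\Lambda\psi|\le C|\Lambda|$, and Claim \ref{c3} gives $\int_M|\Lambda|\;dA<\infty$, so that $\int_M\Lambda\psi\;dA$ converges absolutely. As $M_t$ exhausts $M^o$ and $\partial M$ has $n$-dimensional measure zero, this yields $\lim_{t\to 1-0}I(t)=-n\int_M\Lambda\psi\;dA$.

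The delicate step is the boundary term $I\!I(t)=-\oint_{\partial M_t}\langle\delta X,R(p(\tilde\xi))\rangle\;d\tilde s$. Using $R(N)=\nu$, $R(\nu)=-N$ and $\langle\tilde\xi,\nu\rangle=\gamma(\nu)$, the integrand is $\langle\delta X,R(p(\tilde\xi))\rangle=\langle\tilde\xi,N\rangle\psi-\gamma(\nu)\langle\delta X,N\rangle$, which depends on the outward conormal $N$ of $\partial M_t$ and on the induced volume form $d\tilde s$; both vary with $t$, so the integrand is not simply the restriction to $\partial M_t$ of one fixed function on $M$. This $t$-dependence of the geometric data on $\partial M_t$ is the main obstacle. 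To overcome it I would take the exhaustion $\{M_t\}$ adapted to a collar neighborhood $\partial M\times[0,1)$ of the smooth boundary, so that $\partial M_t$ converges smoothly to $\partial M$ as $t\to 1-0$ and $N$, $\nu$, $d\tilde s$ converge to their values on $\partial M$. Every factor in the integrand then extends continuously up to $\partial M$: $\delta X$ is continuous (the variation is $C^\infty$ in $\epsilon$ and $C^2$ on $M$), $\nu$ extends to $M$ by (A3) and $\xi=\xi_\gamma$ is $C^1$ by Proposition \ref{PCH}, so $\tilde\xi=\xi\circ\nu$ and $\gamma(\nu)$ are continuous up to $\partial M$. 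By continuity of the integrand and smooth convergence of $\partial M_t$, the boundary integrals converge, giving $\lim_{t\to 1-0}I\!I(t)=-\oint_{\partial M}\langle\delta X,R(p(\tilde\xi))\rangle\;d\tilde s$.

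Finally, combining with (\ref{ff2}) I would conclude
\begin{equation*}
\lim_{t\to 1-0}\Bigl(\lim_{\epsilon\to 0}f(\epsilon,t)\Bigr)=\lim_{t\to 1-0}\bigl(I(t)+I\!I(t)\bigr)=-\int_M n\Lambda\psi\;dA-\oint_{\partial M}\langle\delta X,R(p(\tilde\xi))\rangle\;d\tilde s,
\end{equation*}
which is precisely (\ref{ff5}). The interior convergence is immediate from Claim \ref{c3} and boundedness of $\psi$, so I expect the only genuine difficulty to be the boundary-term convergence, controlled by the collar-adapted exhaustion.
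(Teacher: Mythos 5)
Your proposal is correct and takes essentially the same approach as the paper: for $I(t)$ the paper likewise combines the bound $|\Lambda\psi|\le \max_M|\psi|\cdot|\Lambda|$ with Claim \ref{c3}, merely phrasing the conclusion as a Cauchy criterion on $\int_{M_{t_2}\setminus M_{t_1}}|\Lambda|\;dA$ rather than as absolute convergence of $\int_M\Lambda\psi\;dA$. For $I\!I(t)$ the paper simply declares the convergence ``obvious''; your collar-adapted exhaustion together with continuity of $\delta X$, $\nu$, $\tilde{\xi}$ (and hence of the boundary integrand) up to $\partial M$ is precisely the detail the paper leaves implicit, and it is consistent with how the paper already treats the exhaustion elsewhere (Claim \ref{c1} takes maxima over $\partial M_t$ for all $t\in[0,1]$, which presupposes exactly such an adapted exhaustion and continuous extension).
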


\noindent {\it Proof.} \ 
Recall
$$
I(t)=- \int_{M_t} n\Lambda\psi\;dA, \quad 
I\!I(t)=
-\oint_{\partial {M_t}} \langle \delta X, R(p(\tilde{\xi}))\rangle\;d{\tilde s}.
$$
For $0<t_1<t_2<1$, we have
\begin{eqnarray}
\frac{1}{n}|I(t_2)-I(t_1)|
&=&
\Bigl|\int_{M_{t_2}\setminus M_{t_1}} \Lambda\psi\;dA
\Bigr|
\le 
\int_{M_{t_2}\setminus M_{t_1}} \bigl|\Lambda\psi\bigr|\;dA \nonumber \\
&\le& 
\max_M|\psi|\cdot \int_{M_{t_2}\setminus M_{t_1}} \bigl|\Lambda\bigr|\;dA. \label{in1}
\end{eqnarray}
Since $\displaystyle \int_M |\Lambda| \; dA$ converges (Claim \ref{c3}), (\ref{in1}) implies that, 
for any $\sigma>0$, there exists a number $T \in (0, 1)$ such that if $T<t_1<t_2<1$, then $|I(t_2)-I(t_1)|<\sigma$. 
Hence $\lim_{t \to 1-0} I(t)$ converges. Convergence of $\lim_{t \to 1-0} I\!I(t)$ is obvious.
\hfill $\Box$

\vskip0.5truecm

The following claim is obviously true. 

\begin{claim}\label{c4}
For any $\epsilon \in (-\epsilon_0, 0)\cup (0, \epsilon_0)$, 
\begin{equation}\label{ff3}
\lim_{t \to 1-0} f(\epsilon, t)=
\int_{M}\frac{1}{\epsilon}\Bigl(
\gamma(\nu_\epsilon)\;dA_\epsilon - \gamma(\nu) \;dA
\Bigr)
\end{equation}
holds. 
\end{claim}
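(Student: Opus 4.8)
The plan is to treat the two terms of the integrand separately and to exploit that each is a nonnegative $n$-form of finite total integral over the compact manifold $M$. First I would write, for fixed $\epsilon \in (-\epsilon_0, 0)\cup(0, \epsilon_0)$,
\begin{equation*}
f(\epsilon, t)=\frac{1}{\epsilon}\Bigl(\int_{M_t}\gamma(\nu_\epsilon)\;dA_\epsilon-\int_{M_t}\gamma(\nu)\;dA\Bigr),
\end{equation*}
so that (\ref{ff3}) reduces to showing that each of the two integrals over $M_t$ tends, as $t\to 1-0$, to the corresponding integral over all of $M$.

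The key observations are the following. Since $\gamma$ is continuous on the compact sphere $S^n$ it is bounded, and the $n$-dimensional volumes of $M$ induced by $X_\epsilon$ and by $X$ are finite; hence $\mu_\epsilon:=\gamma(\nu_\epsilon)\,dA_\epsilon$ and $\mu_0:=\gamma(\nu)\,dA$ are finite nonnegative measures on $M$, with $\mu_\epsilon(M)={\mathcal F}_\gamma(X_\epsilon)$ and $\mu_0(M)={\mathcal F}_\gamma(X)$. Moreover $M\setminus M^o=\partial M$ is of dimension $n-1$, hence a null set for both $\mu_\epsilon$ and $\mu_0$ (cf. Lemma \ref{measure}), so that $\mu_\epsilon(M^o)=\mu_\epsilon(M)$ and $\mu_0(M^o)=\mu_0(M)$.

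Next I would estimate the tail. Fix $\sigma>0$. By the continuity of the integral of a fixed integrable density (inner regularity of the finite measure $\mu_\epsilon$) together with $\mu_\epsilon(\partial M)=0$, there is a compact set $K\subset M^o$ with $\mu_\epsilon(M\setminus K)<\sigma$; by property (ii) of the exhaustion $\{M_t\}$ there is $t_0\in(0,1)$ such that $M_t\supset K$ for all $t\in(t_0,1)$, whence $\mu_\epsilon(M\setminus M_t)\le\mu_\epsilon(M\setminus K)<\sigma$ for such $t$. Therefore $\int_{M_t}\gamma(\nu_\epsilon)\,dA_\epsilon\to\int_M\gamma(\nu_\epsilon)\,dA_\epsilon$ as $t\to 1-0$, and the same argument applied to $\mu_0$ gives $\int_{M_t}\gamma(\nu)\,dA\to\int_M\gamma(\nu)\,dA$. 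Since both limits are finite I may combine them and divide by $\epsilon$ to obtain
\begin{equation*}
\lim_{t\to 1-0}f(\epsilon,t)=\frac{1}{\epsilon}\Bigl(\int_M\gamma(\nu_\epsilon)\,dA_\epsilon-\int_M\gamma(\nu)\,dA\Bigr)=\int_M\frac{1}{\epsilon}\Bigl(\gamma(\nu_\epsilon)\;dA_\epsilon-\gamma(\nu)\;dA\Bigr),
\end{equation*}
which is exactly (\ref{ff3}).

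There is essentially no genuine obstacle here, which is why the statement is recorded as obviously true; the only point needing a moment's care is the finiteness of the total energies ${\mathcal F}_\gamma(X_\epsilon)$ and ${\mathcal F}_\gamma(X)$, which both guarantees that the two limits exist as finite numbers (so that the subtraction and the division by $\epsilon$ are legitimate) and furnishes, via the vanishing of the tail $\mu_\epsilon(M\setminus M_t)$ as $t\to 1-0$, the passage from $M_t$ to $M$.
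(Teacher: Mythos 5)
Your proof is correct; the point to note is that the paper itself contains no proof of this claim at all --- it is introduced with the sentence ``The following claim is obviously true'' --- so what you have written is precisely the routine verification the paper suppresses. Your ingredients are the right ones: for fixed $\epsilon\neq 0$ the integrand is $1/\epsilon$ times the difference of two finite nonnegative measures $\mu_\epsilon=\gamma(\nu_\epsilon)\,dA_\epsilon$ and $\mu_0=\gamma(\nu)\,dA$ (finiteness coming from compactness of $M$, boundedness of $\gamma$, and boundedness of the volume densities of the $C^2$ maps $X_\epsilon$, $X$); the set $\partial M=M\setminus M^o$ is null for both; and inner regularity combined with exhaustion property (ii) yields the tail estimate $\mu_\epsilon(M\setminus M_t)\to 0$, hence convergence of each integral over $M_t$ to the integral over $M$, after which division by the fixed $\epsilon$ is harmless. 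Two remarks in your favour: first, since the family $\{M_t\}_{0\le t<1}$ is not assumed monotone in $t$, your route via property (ii) of the exhaustion together with inner regularity is the correct substitute for a naive monotone-convergence argument, which would not directly apply; second, the null-set fact you need is not quite Lemma \ref{measure} (which concerns the $n$-dimensional Hausdorff measure of the image $X(\partial M_i)$ in ${\mathbb R}^{n+1}$) but the more elementary statement, which is what you actually use, that the $(n-1)$-dimensional set $\partial M\subset M$ has zero measure with respect to any measure defined by an $n$-form density on $M$.
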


\noindent {\it Proof of Proposition \ref{FV}.} \ 
From Claims \ref{c1}, \ref{c2}, and \ref{c4}, the limit
$$
\lim_{\substack{\epsilon \to 0 \\ t \to 1-0}} f(\epsilon, t)
$$
exists, and 
\begin{equation}\label{ff4}
\lim_{\epsilon \to 0}\Bigl(\lim_{t \to 1-0} f(\epsilon, t)\Bigr)=
\lim_{\substack{\epsilon \to 0 \\ t \to 1-0}} f(\epsilon, t)
=
\lim_{t \to 1-0}\Bigl(\lim_{\epsilon \to 0} f(\epsilon, t)\Bigr)
\end{equation}
holds. 
(\ref{ff4}) together with (\ref{ff3}), (\ref{ff5}) implies
\begin{equation}\label{ff6}
\frac{d{\mathcal F}_\gamma(X_\epsilon)}{d\epsilon}\Big|_{\epsilon=0}
=
- \int_{M} n\Lambda\psi\;dA
-\oint_{\partial {M}} \langle \delta X, R(p(\tilde{\xi}))\rangle\;d{\tilde s},
\end{equation}
which completes the proof of Proposition \ref{FV}.
\hfill $\Box$


\appendix
\section{Computations for Example \ref{ex22}} \label{proof:ex22}

Set
\begin{eqnarray}
\gamma_m(\cos\theta, \sin\theta)&=&(\cos^{2m}\theta+\sin^{2m}\theta)^{1/(2m)}\\
&=&\gamma_m(\theta).
\end{eqnarray}
The homogeneous extension of $\gamma$ is
\begin{equation}
\overline{\gamma}_m(x_1, x_2)=
(x_1^{2m}+x_2^{2m})^{1/(2m)}.
\end{equation}
And so, 
\begin{equation}
D\overline{\gamma}_m(x_1, x_2):=
((\overline{\gamma}_m)_{x_1}, (\overline{\gamma}_m)_{x_2})
=
(x_1^{2m}+x_2^{2m})^{(1/(2m))-1}(x_1^{2m-1}, x_2^{2m-1})
.
\end{equation}
Hence, the Cahn-Hoffman map $\xi_m$ for $\gamma_m$ is computed as follows.
\begin{eqnarray}
\xi_m(\cos\theta, \sin\theta)&:=& (f_m(\theta), g_m(\theta))
\\
&:=&D_{S^1}\gamma_m\Big|_{\nu}+\gamma_m(\nu)\nu
=D\overline{\gamma}_m(x_1, x_2)|_{(\cos\theta, \sin\theta)}\\
&=&
(\cos^{2m}\theta+\sin^{2m}\theta)^{(1/(2m))-1}(\cos^{2m-1}\theta, \sin^{2m-1}\theta). \label{kusi}
\end{eqnarray}
Also, we have
\begin{eqnarray}
\Delta\overline{\gamma}_m(x_1, x_2)
&:=&
(\overline{\gamma}_m)_{x_1x_1}
+
(\overline{\gamma}_m)_{x_2x_2} \nonumber\\
&=&(2m-1)x_1^{2m-2}x_2^{2m-2}(x_1^2+x_2^2)(x_1^{2m}+x_2^{2m})^{(1/(2m))-2},
\end{eqnarray}
\begin{eqnarray}
A_m:=d\xi_m &:=&
D^2\gamma_m +\gamma_m \cdot 1
=\Delta\overline{\gamma}_m(x_1, x_2)\Big|_{(\cos\theta, \sin\theta)}
\nonumber\\
&=&(2m-1)\cos^{2m-2}\theta \sin^{2m-2}\theta(\cos^{2m}\theta+\sin^{2m}\theta)^{(1/(2m))-2}.
\end{eqnarray}
Hence, 

(i) If $m \ge 2$, $A_m=0$ at $\theta=(1/2)\ell \pi$, ($\ell\in \mathbb Z$).

(ii) $A_m$ is positive-definite on $S^1\setminus\{(\cos\theta, \sin\theta) \:|\:
\theta=(1/2)\ell \pi, (\ell\in \mathbb Z)\}$. 
\newline Using (\ref{kusi}), we compute the curvature $\kappa_m$ of $\xi_m$ with respect to the outward-pointing normal $\nu$ and obtain the followings. 
\begin{equation}\label{def-k}
\kappa_m =
\frac{-f_m'g_m''+f_m''g_m'}{((f_m')^2+(g_m')^2)^{3/2}},
\end{equation}
\begin{equation}\label{element}
((f_m')^2+(g_m')^2)^{1/2}
=
(2m-1)\cos^{2m-2}\theta\sin^{2m-2}\theta(\cos^{2m}\theta+\sin^{2m}\theta)^{\frac{1}{2m}-2},
\end{equation}
\begin{equation}\label{numerator}
-f_m'g_m''+f_m''g_m'
=
-(2m-1)^2\cos^{4m-4}\theta\sin^{4m-4}\theta(\cos^{2m}\theta+\sin^{2m}\theta)^{\frac{1}{m}-4}
\end{equation}
Using (\ref{def-k}), (\ref{element}), and (\ref{numerator}), we obtain
\begin{equation}\label{kap}
\kappa_m=\frac{-1}{2m-1}\cos^{-2m+2}\theta\sin^{-2m+2}\theta(\cos^{2m}\theta+\sin^{2m}\theta)^{2-\frac{1}{2m}}.
\end{equation}
Hence, for any $\ell \in {\mathbb Z}$ and $m \ge 2$, 
\begin{equation}
\lim_{\theta \to \frac{\ell}{2}\pi}\kappa_m(\theta)=-\infty
\end{equation}
holds. 
On the other hand, using (\ref{element}) and (\ref{kap}), we obtain
\begin{equation}
\kappa_m \;ds=
\frac{-f_m'g_m''+f_m''g_m'}{((f_m')^2+(g_m')^2)^{3/2}}((f_m')^2+(g_m')^2)^{1/2}\;d\theta
=-d\theta.
\end{equation}
Hence 
\begin{equation}\label{tot1}
\int_0^{2\pi}\kappa_m\;ds
=-\int_0^{2\pi}d\theta
=-2\pi.
\end{equation}
holds. However, (\ref{tot1}) is trivial because $\xi:S^1\to{\mathbb R}^2$ is a front with $\nu \in S^1$.

\section{Proof of Lemma \ref{HL} (i)} \label{pfd}

By the same way as that outlined in \cite[p.699]{HL2008}, we can prove the desired result as follows. 
Let $X:M^n \to {\mathbb R}^{n+1}$ be an immersion with unit normal $\nu:M\to S^n$. 
Let $(u_1, \cdots, u_n)$ be local coordinates in $M$. 
We use the same notations as in \S \ref{a-curv}. 
Note 
$$
S^\gamma=
-d\tilde{\xi}
=-d\xi\circ d\nu
=A({h}_{ij})(g^{ij}), \quad A:=D^2\gamma+\gamma\cdot 1.
$$
By choosing a suitable coordinate system of $M$, $Q :=({h}_{ij})(g^{ij})$ can be a symmetric matrix.

$A$ can be represented as an $n\times n$ matrix. 
Since $A$ is symmetric, it has $n$ real eigenvalues, which we denote by $\lambda_1, \cdots, \lambda_n$. 
Since $A$ is positive definite, all of $\lambda_1, \cdots, \lambda_n$ are positive. 

Since $A$ is symmetric, it has an eigendecomposition $PD\;{}^t\!P$, where $P$ is a real orthogonal matrix whose rows comprise an orthonormal basis of eigenvectors of $A$, and $D$ is a real diagonal matrix whose main diagonal contains the corresponding eigenvalues. Hence, we can write
\[
  D = \left(
    \begin{array}{ccccc}
      \lambda_1 & 0 & \cdots& \cdots & 0\\
      0 & \lambda_2 & 0 & \cdot & \cdot \\
      \cdot & 0 & \cdot & 0 & \cdot \\
      \cdot & \cdot & 0 & \lambda_{n-1} & 0 \\
      0 & \cdots &\cdots & 0 & \lambda_n
    \end{array}
  \right)=(\lambda_i\delta_{ij})_{i, j=1, \cdots, n}.
\]
Set
\[
  \tilde{D} = \left(
    \begin{array}{ccccc}
      \sqrt{\lambda_1} & 0 & \cdots& \cdots & 0\\
      0 &  \sqrt{\lambda_2} & 0 & \cdot & \cdot \\
      \cdot & 0 & \cdot & 0 & \cdot \\
      \cdot & \cdot & 0 &  \sqrt{\lambda_{n-1}} & 0 \\
      0 & \cdots &\cdots & 0 & \sqrt{\lambda_n}
    \end{array}
  \right)=(\sqrt{\lambda_i}\delta_{ij})_{i, j=1, \cdots, n}.
\]
Then,
$$
A=PD\;{}^t\!P
=P\tilde{D}\tilde{D}\;{}^t\!P
=P\tilde{D}\;{}^t\!\tilde{D}\;{}^t\!P
=P\tilde{D}\;{}^t\!(P\tilde{D}).
$$
Set
$$
\tilde{P}:=P\tilde{D}.
$$
Then, 
$$
A=\tilde{P}\;{}^t\!\tilde{P}.
$$
Note that $\tilde{P}$ is a regular matrix. Hence, we have 
\begin{eqnarray}
\det(tI-S^\gamma)&=&\det(tI-AQ)=\det(tI-\tilde{P}\;{}^t\!\tilde{P}Q)\\
&=&\det(\tilde{P}^{-1}(tI-\tilde{P}\;{}^t\!\tilde{P}Q)\tilde{P})
=\det(tI-\;{}^t\!\tilde{P}Q\tilde{P}).
\end{eqnarray}
Hence, $S^\gamma$ has the same eigenvalues as those of ${}^t\!\tilde{P}Q\tilde{P}$. 
Since $Q$ is symmetric, ${}^t\!\tilde{P}Q\tilde{P}$ is also symmetric. Therefore, $S^\gamma$ has $n$ real eigenvalues. 

 \begin{flushleft}
Miyuki K{\footnotesize OISO} \\
Institute of Mathematics for Industry  \\
Kyushu University  \\
Motooka Nishi-ku, Fukuoka \\
FUKUOKA 819-0395,
JAPAN \\
E-mail: {\tt koiso@math.kyushu-u.ac.jp} \\
 \end{flushleft}

\end{document}